\documentclass[11pt]{amsart}
\input xy
\xyoption{all}

\usepackage{amsmath,amsfonts,amsthm,amssymb,latexsym,amscd,amsopn,graphics,color,enumerate,lscape,mathtools,mathrsfs,placeins}
\usepackage{hyperref}
\usepackage{tikz-cd}
\usepackage{todonotes}
\usepackage{comment}
\usepackage{enumitem}
\usepackage{rotating}
\usepackage[nocompress,noadjust]{cite}
\usepackage{stmaryrd,setspace}
\usepackage[margin=1in]{geometry}

\setlength{\parindent}{30pt}

\allowdisplaybreaks

\theoremstyle{plain}

\newtheorem{theorem}{Theorem}[section]
  \newtheorem{corollary}[theorem]{Corollary}

  \newtheorem{prop}[theorem]{Proposition}
  
  \newtheorem{lemma}[theorem]{Lemma}

\theoremstyle{definition}
  \newtheorem{defn}[theorem]{Definition}

\theoremstyle{remark}
 \newtheorem*{remark}{Remark}

  \makeatletter
\newenvironment{subtheorem}[1]{%
  \def\subtheoremcounter{#1}%
  \refstepcounter{#1}%
  \protected@edef\theparentnumber{\csname the#1\endcsname}%
  \setcounter{parentnumber}{\value{#1}}%
  \setcounter{#1}{0}%
  \expandafter\def\csname the#1\endcsname{\theparentnumber.\Alph{#1}}%
  \ignorespaces
}{%
  \setcounter{\subtheoremcounter}{\value{parentnumber}}%
  \ignorespacesafterend
}
\makeatother
\newcounter{parentnumber}

\setcounter{secnumdepth}{5}

\newtheoremstyle{TheoremNum}
        {8.0pt plus 2.0pt minus 4.0pt}
        {8.0pt plus 2.0pt minus 4.0pt}
        {\itshape}
        {}
        {\bfseries}
        {.}
        {.5em}
        {\thmname{#1}\thmnote{ \bfseries #3}}
\theoremstyle{TheoremNum}

\makeatletter
\def\@tocline#1#2#3#4#5#6#7{\relax
  \ifnum #1>\c@tocdepth 
  \else
    \par \addpenalty\@secpenalty\addvspace{#2}%
    \begingroup \hyphenpenalty\@M
    \@ifempty{#4}{%
      \@tempdima\csname r@tocindent\number#1\endcsname\relax
    }{%
      \@tempdima#4\relax
    }%
    \parindent\z@ \leftskip#3\relax \advance\leftskip\@tempdima\relax
    \rightskip\@pnumwidth plus4em \parfillskip-\@pnumwidth
    #5\leavevmode\hskip-\@tempdima
      \ifcase #1
       \or\or \hskip 30pt \or \hskip 2em \else \hskip 3em \fi%
      #6\nobreak\relax
    \hfill\hbox to\@pnumwidth{\@tocpagenum{#7}}\par
    \nobreak
    \endgroup
  \fi}
\makeatother

\def\Z{{\mathbb Z}}

\def\B{{\mathcal B}}

\def\Z{{\mathbb Z}}

\def\inv{{\rm inv}}

\def\Vol{{\rm Vol}}

\def\R{{\mathbb R}}
\def\C{{\mathbb C}}
\def\F{{\mathbb F}}

\def\Q{{\mathbb Q}}

\def\Z{{\mathbb Z}}

\def\F{{\mathbb F}}
\def\Q{{\mathbb Q}}

\def\cJ{{\mathcal J}}

\def\Z{{\mathbb Z}}


\renewcommand{\lambda}{Q}

\newcommand{\nc}{\newcommand}
\nc{\on}{\operatorname}
\nc{\renc}{\renewcommand}
\nc{\wt}{\widetilde}
\nc{\defeq}{\vcentcolon=}
\nc{\eqdef}{=\vcentcolon}
\nc{\Spec}{\on{Spec}}
\nc{\ol}{\overline}
\renc{\d}{\partial}
\nc{\mc}{\mathcal}



\makeatletter
\@namedef{subjclassname@2020}{%
  \textup{2020} Mathematics Subject Classification}
\makeatother

\newcommand\rddots
   {\mathinner{\mkern1mu
       \raise 1pt\hbox{.}\mkern2mu
       \raise 4pt\hbox{.}\mkern2mu
       \raise 7pt\hbox{.}\mkern1mu}}

\title[The mean number of $2$-torsion elements in the class groups of cubic orders]{\vspace*{-0.5in}The mean number of $2$-torsion elements \\ in the class groups of cubic orders\vspace*{-0.1in}}

\author[Ashvin A.~Swaminathan]{Ashvin A.~Swaminathan$^\dagger$}
\thanks{$^\dagger$Dept.~of Mathematics, Harvard University, Cambridge, MA 02138. Email: \texttt{swaminathan@math.harvard.edu}.}


\subjclass[2020]{11R29, 11R45 (primary), 11H55, 11E76 (secondary)}

\keywords{Class groups, cubic orders, irreducibility.}

\date{\today}

\begin{document}


\maketitle

\vspace*{-0.4in}
\begin{abstract}
We determine the mean number of 2-torsion elements in class groups of cubic orders, when such orders are enumerated by discriminant. Specifically, we prove that when isomorphism classes of totally real (resp., complex) cubic orders are enumerated by discriminant, the average $2$-torsion in the class group is $1 + \frac{1}{4} \times \frac{\zeta(2)}{\zeta(4)}$ (resp., $1 + \frac{1}{2} \times \frac{\zeta(2)}{\zeta(4)}$). In particular, we find that the average $2$-torsion in the class group increases when one ranges over all orders in cubic fields instead of restricting to the subfamily of rings of integers of cubic fields, where the average $2$-torsion in the class group was first determined in work of Bhargava to be $\frac{5}{4}$ (resp., $\frac{3}{2}$).

By work of Bhargava--Varma, proving this result amounts to obtaining an asymptotic count of the number of ``reducible'' $\operatorname{SL}_3(\Z)$-orbits on the space $\Z^2 \otimes_{\Z} \on{Sym}^2 \Z^3$ of $3 \times 3$ symmetric integer matrices having bounded invariants and satisfying local conditions. In this paper, we resolve the generalization of this orbit-counting problem where the dimension $3$ is replaced by any fixed odd integer $N \geq 3$. More precisely, we determine asymptotic formulas for the number of reducible $\operatorname{SL}_N(\Z)$-orbits on $\Z^2 \otimes_{\Z} \on{Sym}^2 \Z^N$ satisfying general infinite sets of congruence conditions.
\end{abstract}

\tableofcontents

\section{Introduction} \label{sec-intro}

\subsection{Main results} \label{sec-remoaner}
A striking result of Bhargava's thesis was the determination of the average size of the $2$-torsion subgroup in the class groups of cubic number fields, enumerated by discriminant (see~\cite[Theorem~5.4]{bhargthesis} and~\cite[Theorem~5]{MR2183288}). Specifically, Bhargava proved that when totally real (resp., complex) cubic fields are enumerated by the absolute values of their discriminants, the average $2$-torsion in the class group is equal to $\frac{5}{4}$ (resp., $\frac{3}{2}$). This result remains one of just a handful of cases of the heuristics on class groups of number fields, formulated in the foundational works of Cohen--Lenstra~\cite{MR756082}, Cohen--Martinet~\cite{MR866103}, and Malle~\cite{MR2778658}, that have ever been proven.

The main result of this paper constitutes a generalization of Bhargava's breakthrough to the full family of all \emph{orders} in cubic fields. We prove:

\begin{subtheorem}{theorem}
\begin{theorem} \label{thm-main2tors}
When irreducible cubic orders $\mc{O}$ over $\Z$ are enumerated by the absolute values of their discriminants, the average size of $\on{Cl}(\mc{O})[2]$ is:
\begin{itemize}[leftmargin=2em]
\vspace*{3pt}
\item[{\rm (a)}] $\displaystyle 1 + \frac{1}{4} \times \frac{\zeta(2)}{\zeta(4)}$ for the family of totally real cubic orders $\mc{O}$, and
\vspace*{6pt}
\item[{\rm (b)}] $\displaystyle 1 + \frac{1}{2} \times \frac{\zeta(2)}{\zeta(4)}$ for the family of complex cubic orders $\mc{O}$. \vspace{1pt}
\end{itemize}
\end{theorem}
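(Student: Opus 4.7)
The plan is to reduce Theorem~\ref{thm-main2tors} to an $\mathrm{SL}_3(\Z)$-orbit counting problem via the work of Bhargava--Varma, and then to carry out that orbit count. By the Delone--Faddeev correspondence, irreducible cubic orders $\mc{O}$ are parametrized by $\mathrm{GL}_2(\Z)$-equivalence classes of integral binary cubic forms $f$ with $\disc(f) = \disc(\mc{O})$. By the Bhargava--Varma parametrization, $\on{Cl}(\mc{O})[2]$ is in bijection with a set of ``projective'' $\mathrm{SL}_3(\Z)$-orbits on pairs $(A,B) \in \Z^2 \tns \Sym^2 \Z^3$ whose resolvent binary cubic form equals $f$, with a single distinguished reducible orbit corresponding to the identity element of $\on{Cl}(\mc{O})[2]$ --- this contributes the $1$ in each of the asserted averages. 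The theorem reduces to showing that the average number of \emph{non-identity} reducible orbits is $\tfrac{1}{4}\cdot\zeta(2)/\zeta(4)$ or $\tfrac{1}{2}\cdot\zeta(2)/\zeta(4)$ in the totally real or complex case respectively, which is precisely the paper's main orbit-counting theorem specialized to $N=3$.

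The core step is the orbit count. I would realize reducible $\mathrm{SL}_3(\Z)$-orbits of bounded invariant height as lattice points in a fundamental domain $\mc{F} \subset \mathrm{SL}_3(\R)$ for $\mathrm{SL}_3(\Z)$ acting on $\R^2 \tns \Sym^2 \R^3$, filtered by the discriminant of the resolvent form. Applying Bhargava's averaging trick --- integrating $\mc{F}$ over a compact subset of $\mathrm{SL}_3(\R)$ --- yields a region amenable to Davenport's lemma. The reducible orbits concentrate in the cusp of $\mc{F}$, and the key technical point is to extract an exact asymptotic count (not merely an upper bound) by parametrizing these cuspidal reducible orbits as lattice points in an explicit lower-dimensional subspace. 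The archimedean volume computation splits according to the signature of the resolvent form, producing the $\tfrac14$ versus $\tfrac12$ discrepancy in the leading constants for the totally real and complex cases.

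The third step is to pass from rings of integers (the setting of Bhargava's original theorem) to general cubic orders via an infinite sieve. For each prime $p$, one computes a local density $\sigma_p$ that counts orbits modulo $p^k$ corresponding to pairs $(A,B)$ whose associated cubic ring at $p$ is allowed (i.e., not required to be maximal). A direct $p$-adic mass computation should produce local factors $\sigma_p$ with $\prod_p \sigma_p = \zeta(2)/\zeta(4)$, reflecting the Euler product identity $\zeta(2)/\zeta(4) = \prod_p (1+p^{-2})$. Combined with the archimedean volume, this gives the averages asserted in the theorem. To justify the infinite sieve, one needs a uniformity estimate: the number of reducible orbits of height $\le X$ for which $(A,B)$ is non-maximal at $p$ must decay sufficiently fast in $p$ to control the tail of $\prod_p \sigma_p$.

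The main obstacle is establishing these uniform tail estimates. Bhargava's original treatment of the maximal case required only finite sets of congruence conditions, and the cusp-region bounds used there are not strong enough to be applied uniformly across all primes. Overcoming this is the essence of the paper's resolution of the general $\mathrm{SL}_N(\Z)$ orbit-counting problem on $\Z^2 \tns \Sym^2 \Z^N$ for odd $N \ge 3$: one obtains uniform bounds on reducible cuspidal orbit counts that are strong enough against arbitrary congruence conditions, and these bounds feed into the sieve to yield the average over all orders rather than merely over rings of integers.
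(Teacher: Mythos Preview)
Your high-level reduction is right in outline: invoke Bhargava--Varma (Theorem~\ref{thm-inc}) to reduce to computing $\on{Avg}\#\mc{I}(\mc{O})[2]$, identify that with a count of projective reducible $\on{SL}_3(\Z)$-orbits (Theorem~\ref{thm-bharg}), and show this average equals $\zeta(2)/\zeta(4)=\prod_p(1+p^{-2})$. Two corrections at this level: the reducible orbits parametrize \emph{all} of $\mc{I}(R_f)[2]$, not just the identity of $\on{Cl}(R_f)[2]$; the ``$1$'' in the formula of Theorem~\ref{thm-inc} comes from the irreducible-orbit count already carried out by Bhargava--Varma, and the $\tfrac14$ versus $\tfrac12$ is likewise built into Theorem~\ref{thm-inc} --- it is not produced by any archimedean volume in the reducible count, which is signature-independent.

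The genuine gap is in the core step. You propose to count reducible orbits by geometry of numbers on a fundamental domain for $\on{SL}_3(\Z)$ --- Bhargava averaging, Davenport's lemma, and a direct cusp analysis. As the paper notes, this is exactly the approach that has historically produced only \emph{upper bounds} (Bhargava--Shankar--Wang), and you do not explain how to extract an exact asymptotic from it. The paper avoids geometry of numbers on the reducible locus entirely. Instead it passes from reducible $\on{SL}_N(\Z)$-orbits on $W_N(\Z)$ to $G_N(\Z)$-orbits on the hyperplane $W_N^0(\Z)$ (Proposition~\ref{prop-sasymp}), where $G_N\subset\on{SL}_N$ is a specific parabolic-type subgroup, and then proves a \emph{strong local-to-global principle} for this action (Theorem~\ref{thm-stronglocglob}): $G_N$ has class number~$1$ over $\Q$, acts transitively with trivial stabilizers over $\C$ on each fiber of $\on{inv}$, and admits an integral section. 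This converts the global orbit count into a product of $p$-adic orbit counts, which are then rewritten via a Jacobian change-of-variables (Proposition~\ref{prop-jac}) as integrals $\int_{W_3^0(\Z_p)}|\lambda(w)|_p\,dw$ over the projective locus, giving the local factor $1+p^{-2}$. The Bhargava--Shankar--Wang estimates enter only as a tail bound for large $\lambda$-invariant in the sieve (Theorem~\ref{thm-acceptcubic2}), not as the main count. Your proposal misses the key idea --- the auxiliary pair $(G_N,W_N^0)$ and its adelic orbit decomposition --- that makes the exact asymptotic accessible.
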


\begin{remark}
Note that $\frac{\zeta(2)}{\zeta(4)} = \frac{15}{\pi^2} \approx 1.51982 > 1$.
\end{remark}

Let $N = 2n+1 \geq 3$ be an odd integer, and let $U_N$ be the affine space over $\Z$ whose $R$-points are given by binary $N$-ic forms over $R$ for any $\Z$-algebra $R$. When $N = 3$, the Delone--Faddeev--Levi correspondence~\cite{MR0160744} states that the map sending an irreducible binary cubic form $f \in U_3(\Z)$ to the ring $R_f$ of global sections of the subscheme of $\mathbb{P}^1_{\Z}$ cut out by $f$ defines a bijection between the irreducible orbits of $\on{GL}_2(\Z)$ on $U_3(\Z)$ and the set of isomorphism classes of orders in cubic number fields. In light of this, studying cubic orders amounts to studying irreducible integral binary cubic forms that lie in a fundamental region $\mc{F}$ for the action of $\on{GL}_2(\Z)$ on $U_3(\R)$. 

The region $\mc{F}$ may be chosen so that the set of $f \in \mc{F}$ with discriminant $\on{disc}(f) \ll X$ is approximately the set of $f \in \mc{F}$ whose coefficients are all $\ll X^{\frac{1}{4}}$. Thus, we expect that the averages in Theorem~\ref{thm-main2tors} should remain the same if we replace the family of cubic orders, enumerated by discriminant, with the family of cubic orders of the form $R_f$,\footnote{Note that each cubic order $\mc{O}$ in this family occurs infinitely many times, once for every $f$ such that $\mc{O} \simeq R_f$!} where $f$ runs through irreducible integral binary cubic forms enumerated by \emph{height}; here we define the height $\on{H}(f)$ of a binary form to be the maximum of the absolute values of its coefficients. Indeed, by modifying the proof of Theorem~\ref{thm-main2tors}, we obtain the following variant:

\begin{theorem} \label{thm-main2tors2}
When irreducible binary cubic forms $f \in U_3(\Z)$ are enumerated by height, the average size of $\on{Cl}(R_f)[2]$ is:
\begin{itemize}[leftmargin=2em]
\vspace*{3pt}
\item[{\rm (a)}] $\displaystyle 1 + \frac{1}{4} \times \frac{\zeta(2)}{\zeta(4)}$ for the family of totally real binary cubic forms $f$, and
\vspace*{6pt}
\item[{\rm (b)}] $\displaystyle 1 + \frac{1}{2} \times  \frac{\zeta(2)}{\zeta(4)}$ for the family of complex binary cubic forms $f$. \vspace{1pt}
\end{itemize}
\end{theorem}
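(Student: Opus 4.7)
The plan is to follow the proof of Theorem~\ref{thm-main2tors} essentially verbatim, replacing the archimedean cutoff $\{|\disc(f)| < X\}$ on invariants by $\{\rmH(f) < X\}$ throughout. The Bhargava--Varma parametrization still identifies, in a weight-preserving manner, the $2$-torsion $\Cl(R_f)[2]$ for each irreducible $f \in U_3(\Z)$ with the set of reducible $\SL_3(\Z)$-orbits on $\Z^2 \otimes_\Z \Sym^2 \Z^3$ whose invariant binary cubic form is $f$. Summing over $f \in U_3(\Z)$ with $\rmH(f) < X$ and separating the ``identity'' orbits (those corresponding to the trivial class in $\Cl(R_f)[2]$), one obtains
\[
\sum_{\substack{f \in U_3(\Z) \\ \rmH(f) < X}} \#\Cl(R_f)[2] \;=\; \#\{\text{identity orbits}\} \;+\; \#\{\text{non-identity reducible orbits}\}.
\]
The identity contribution is simply the number of irreducible $f \in U_3(\Z)$ of the specified signature $\sigma$ with $\rmH(f) < X$, which yields the leading ``$1$'' in the average.

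For the non-identity contribution I would invoke the main orbit-counting theorem of this paper in the case $N = 3$, adapted to use the height-based cutoff on invariants rather than the discriminant cutoff. Inspecting the averaging-method proof of that theorem reveals that the invariant cutoff enters only through the archimedean integration region in $U_3(\R)$; the non-archimedean local density factors depend solely on the imposed congruence conditions, and are identical to those arising in the proof of Theorem~\ref{thm-main2tors}. Accordingly, the main term of the non-identity count takes the shape $c_{\mathrm{loc}} \cdot \int_{U_3(\R)} \mathbf{1}_{\rmH(f) < X,\, \sigma(f) = \sigma}(f) \cdot \Phi_\sigma(f)\, df$, where $c_{\mathrm{loc}}$ is the universal local density and $\Phi_\sigma$ encodes the archimedean density of non-identity reducible orbits above $f$.

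The key observation is that, on dividing by the denominator $\#\{f \in U_3(\Z) : \rmH(f) < X,\, \sigma(f) = \sigma\}$, which is itself asymptotic to $c'_{\mathrm{loc}} \int_{U_3(\R)} \mathbf{1}_{\rmH(f) < X,\, \sigma(f) = \sigma}(f)\, df$, the archimedean integrals yield the same ratio as in the discriminant case. This is because $\Phi_\sigma(f)$ is scale-invariant (homogeneous of degree $0$) under $f \mapsto \lambda f$ --- a property intrinsic to the Bhargava--Varma parametrization and not dependent on which cutoff one uses --- so its average value over $\{\rmH(f) < X\}$ agrees with its average over $\{|\disc(f)| < X\}$ in the limit as $X \to \infty$. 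Thus the non-identity contribution to the average is $\frac{1}{4}\zeta(2)/\zeta(4)$ in the totally real case and $\frac{1}{2}\zeta(2)/\zeta(4)$ in the complex case, matching Theorem~\ref{thm-main2tors}.

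The main obstacle will be verifying that the cusp analysis continues to work with the height cutoff. In Bhargava's strategy one shows that reducible orbits whose representatives lie deep in the cusp of $\SL_3(\Z) \backslash \SL_3(\R)$ contribute only to the error term; the original discriminant-based argument controls these contributions via bounds on forms of small leading coefficient relative to their discriminant. With the height cutoff, these cusp estimates must be re-established, but since $\{\rmH(f) < X\}$ is simply a box in $U_3(\R) \cong \R^4$, the arguments of \cite{bhargthesis} and their subsequent refinements should adapt with essentially only notational modifications, thereby completing the proof.
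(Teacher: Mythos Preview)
Your proposal rests on a misidentification of what the reducible orbits parametrize. The Bhargava--Wood parametrization takes as input a pair $(I,\delta)$ with $I$ a $2$-torsion ideal \emph{class} and $\delta \in K_f^\times/K_f^{\times2}$ a square-norm generator of $I^2$; the orbit is \emph{reducible} precisely when $\delta \equiv 1$, which forces $I$ to be the class of a genuine $2$-torsion \emph{ideal}. Thus the (projective) reducible $\on{SL}_3(\Z)$-orbits over $f$ are in bijection with $\mc{I}(R_f)[2]$, not with $\on{Cl}(R_f)[2]$ (this is exactly Theorem~\ref{thm-bharg}). Your displayed decomposition $\sum \#\on{Cl}(R_f)[2] = \#\{\text{identity orbits}\} + \#\{\text{non-identity reducible orbits}\}$ is therefore not valid: non-identity reducible orbits count nontrivial $2$-torsion ideals, which need not represent nontrivial ideal classes, and conversely nontrivial elements of $\on{Cl}(R_f)[2]$ are detected by \emph{irreducible} orbits (those with $\delta \not\equiv 1$), not reducible ones. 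The main orbit-counting theorem of this paper (Theorems~\ref{thm-main2}--\ref{thm-main1}) counts only the reducible orbits and cannot by itself produce the class-group average.

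The paper's actual proof is a two-step combination. The irreducible-orbit count over height-ordered forms was already carried out by Ho--Shankar--Varma, yielding Theorem~\ref{thm-inc2}: the average of $\#\on{Cl}(R_f)[2]$ equals $1 + 2^{1-(N+r)/2}$ times the average of $\#\mc{I}(R_f)[2]$. The contribution of the present paper is Theorem~\ref{thm-z2z32}, which evaluates the latter average as $\zeta(2)/\zeta(4)$ by counting \emph{projective} reducible orbits via Theorems~\ref{thm-acceptcubic2} and~\ref{thm-main3} (and note these are already formulated with the height cutoff, so no ``adaptation'' is needed). Plugging $N=3$, $r=3$ or $r=1$ into Theorem~\ref{thm-inc2} and multiplying by $\zeta(2)/\zeta(4)$ gives the claimed $1+\tfrac14\cdot\zeta(2)/\zeta(4)$ and $1+\tfrac12\cdot\zeta(2)/\zeta(4)$. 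Your scale-invariance argument for $\Phi_\sigma$ and your concern about redoing cusp estimates are both beside the point: the archimedean constants $\tfrac14$, $\tfrac12$ come from the irreducible side and are already packaged into Theorem~\ref{thm-inc2}.
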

\end{subtheorem}

To prove Theorems~\ref{thm-main2tors} and~\ref{thm-main2tors2}, we utilize an orbit parametrization, discovered by Bhargava when $N = 3$ (see~\cite[Theorem~4]{MR2081442}), and generalized to all odd $N$ by Wood (see~\cite[Theorem~1.3]{MR3187931}). Fix an irreducible binary $N$-ic form $f \in U_N(\Z)$, and assume $f$ is primitive if $N > 3$. Let $K_f \defeq \Q \otimes_{\Z} R_f$, and let $W_N$ denote the affine space over $\Z$ whose $R$-points are given by pairs of symmetric $N \times N$ matrices over $R$. Then the Bhargava--Wood parametrization takes as input a pair $(I,\delta)$, where $I$ is a $2$-torsion ideal class of the order $R_f$ and $\delta \in K_f^\times/K_f^{\times2}$ is a generator of $I^2$ having square norm, and it produces as output the $\on{SL}_N(\Z)$-orbit of a pair $(A,B) \in W_N(R)$ such that $\on{inv}(A,B) \defeq (-1)^{n} \det(xA - yB) = f(x,y)$. 

The set of pairs $(I,\delta)$ corresponding to a form $f \in U_N(\Z)$ naturally partitions into two subsets depending on whether or not $\delta \equiv 1 \in K_f^\times/K_f^{\times2}$. Via the parametrization, pairs $(I,\delta)$ with $\delta \not\equiv 1$ (resp., $\delta \equiv 1$) correspond to so-called \emph{irreducible} (resp., \emph{reducible}) $\on{SL}_N(\Z)$-orbits on $W_N(\Z)$. Here, (the $\on{SL}_N(\Z)$-orbit of) a pair $(A,B) \in W_N(\Z)$ is said to be reducible if, when $A$ and $B$ are viewed as symmetric bilinear forms over $\Q$, they share an isotropic space over $\Q$ of maximal dimension. Geometrically, $A$ and $B$ may be viewed as defining a pair of quadric hypersurfaces in $\mathbb{P}^{N-1}$, and the condition that $(A,B)$ is reducible is equivalent to stipulating that the (finite) Fano scheme parametrizing maximal linear spaces contained in the intersection of these two quadrics has a $\Q$-rational point.

Note that if $(I,\delta)$ is a pair with $\delta \equiv 1$, then $I$ is the class of a \emph{$2$-torsion ideal} of $R_f$ (i.e., a fractional ideal of $R_f$ that squares to the unit ideal). Thus, the set of reducible $\on{SL}_N(\Z)$-orbits of pairs $(A,B) \in \on{inv}^{-1}(f) \cap W_N(\Z)$ is in bijection with the $2$-torsion subgroup of the ideal group $\mc{I}(R_f)$. If $R_f$ is the maximal order in $K_f$ (i.e., $R_f$ is integrally closed in $K_f$), then $\mc{I}(R_f)$ is torsion-free, and the only $2$-torsion ideal is the trivial one. Thus, there is exactly one reducible orbit corresponding to $f$ via the parametrization, and the problem of determining the average size of the $2$-torsion in the class groups of maximal orders $R_f$ amounts to counting just the irreducible orbits. A systematic method for counting irreducible orbits of representations (where the definition of ``irreducible'' depends on the representation) was developed by Bhargava in his thesis (see, e.g.,~\cite{MR2745272}) and was later vastly streamlined in the seminal work of Bhargava and Shankar on Selmer groups of elliptic curves (see, e.g.,~\cite{MR3272925}). Using this method to count just the irreducible orbits, Bhargava and Varma proved the following precursor to Theorem~\ref{thm-main2tors}:

\begin{subtheorem}{theorem}
\begin{theorem}[\protect{\cite[Theorem~2]{MR3369305}}] \label{thm-inc}
When irreducible cubic orders $\mc{O}$ are enumerated by the absolute values of their discriminants, the average size of $\on{Cl}(\mc{O})[2]$ is:
\begin{itemize}[leftmargin=2em]
\vspace*{3pt}
\item[{\rm (a)}] $\displaystyle 1 + \frac{1}{4} \times \underset{\on{disc}(\mc{O}) > 0}{\on{Avg}} \#\mc{I}(\mc{O})[2]\,$ for the family of totally real cubic orders $\mc{O}$, and
\vspace*{6pt}
\item[{\rm (b)}] $\displaystyle 1 + \frac{1}{2} \times \underset{\on{disc}(\mc{O}) < 0}{\on{Avg}}\,  \#\mc{I}(\mc{O})[2]\,$ for the family of complex cubic orders $\mc{O}$. \vspace{1pt}
\end{itemize}
\end{theorem}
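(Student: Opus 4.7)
My plan is to use the Bhargava--Wood parametrization to convert the theorem into an orbit-counting problem on the representation $W_3 = \Z^2 \otimes \on{Sym}^2 \Z^3$, and then to apply the Bhargava--Shankar geometry-of-numbers machinery to carry out that count.

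For the algebraic setup, fix an irreducible integral binary cubic form $f$ and let $\mc{O} = R_f$. The Bhargava--Wood parametrization puts the $\on{SL}_3(\Z)$-orbits on $\on{inv}^{-1}(f) \cap W_3(\Z)$ in bijection with pairs $([I], \delta)$ with $[I] \in \on{Cl}(\mc{O})[2]$ and $\delta$ a square-norm generator of $I^2$ mod $K_f^{\times 2}$. Pairs with $\delta \equiv 1$ are reducible and are in bijection with $\mc{I}(\mc{O})[2]$. A count of the remaining pairs, using Dirichlet's unit theorem and a short analysis of the map $R_f^\times \to K_f^\times/K_f^{\times 2}$, yields an identity of the form
\[
\#\{\text{irreducible } \on{SL}_3(\Z)\text{-orbits above } f\} = m \cdot \#\on{Cl}(\mc{O})[2] - \#\mc{I}(\mc{O})[2],
\]
where $m$ depends (essentially) only on the signature of $\mc{O}$, with $m = 4$ for totally real and $m = 2$ for complex orders. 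Averaging over cubic orders enumerated by $|\on{disc}(\mc{O})|$ and rearranging, Theorem~\ref{thm-inc} is equivalent to the assertion that the average of the left-hand side equals $m$.

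To prove that asymptotic, I would apply the Bhargava--Shankar method. Pick a Siegel-type fundamental domain $\mc{F}$ for $\on{SL}_3(\Z) \backslash \on{SL}_3(\R)$ and a section $\fundset(X)$ of $\on{inv}$ over a $\on{GL}_2(\Z)$-fundamental region of forms of bounded discriminant. The count of irreducible $\on{SL}_3(\Z)$-orbits on $W_3(\Z)$ with invariant in $\fundset(X)$ equals (up to stabilizer weights) the count of $W_3(\Z)$-points in $\mc{F} \cdot \fundset(X)$. Averaging the indicator of $\mc{F}$ over a fixed compact $\mc{K} \subset \on{SL}_3(\R)$ and invoking Davenport's lattice-point lemma produces a main term equal to the volume of $\mc{K} \cdot \fundset(X)$. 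The Jacobian change of variables between $dA\,dB$ on $W_3(\R)$ and $dg \cdot df$ on $\on{SL}_3(\R) \times U_3(\R)$ decomposes this volume into the product of an archimedean factor $m$ --- the number of $\on{SL}_3(\R)$-orbits on a real fiber of $\on{inv}$ of the given signature --- and Davenport--Heilbronn's count of cubic orders of discriminant $\leq X$, giving the desired average value $m$.

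The main obstacle is the cusp analysis. The region $\mc{F} \cdot \fundset(X)$ has cusps of infinite volume, and lattice points in the cusp could a priori inflate the count beyond the main-term volume. Following Bhargava's thesis, one partitions the cusp by the relative sizes of the matrix entries of a reduced representative $(A,B)$ and shows that in each piece either every integer point is reducible (so $A$ and $B$ share a $\Q$-rational isotropic vector, and the orbit is excluded from the irreducible count) or the contribution is of smaller order than $X$. For non-maximal cubic orders one must stratify further by the conductor of $R_f$ and prove a uniform tail estimate controlling the contribution of forms with highly non-squarefree discriminant; this uniformity is the technical crux. Once the cusp is controlled, the main-term volume gives the claimed averages and Theorem~\ref{thm-inc} follows.
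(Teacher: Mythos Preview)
This theorem is not proven in the present paper: it is quoted verbatim from Bhargava--Varma \cite[Theorem~2]{MR3369305} as a known input (a ``precursor'' to Theorem~\ref{thm-main2tors}), and the paper's contribution is to evaluate the quantity $\on{Avg}\,\#\mc{I}(\mc{O})[2]$ that appears on the right-hand side. So there is no proof in this paper against which to compare your proposal.

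That said, your outline is a fair sketch of the actual Bhargava--Varma argument. The algebraic reduction you describe --- identifying orbits above $f$ with pairs $(I,\delta)$, separating the reducible ones as $\mc{I}(\mc{O})[2]$, and using the unit group to see that the irreducible orbits contribute $m\cdot\#\on{Cl}(\mc{O})[2]-\#\mc{I}(\mc{O})[2]$ with $m=2^{r_1+r_2-1}$ --- is exactly their Lemma/Proposition chain, and the geometry-of-numbers count of irreducible $(\on{GL}_2\times\on{SL}_3)(\Z)$-orbits via Bhargava's averaging method is precisely how they compute that the irreducible average equals $m$. One caveat: your claim that $m$ depends ``essentially only on the signature'' hides a genuine issue for non-maximal orders, since orbits over such $f$ can have nontrivial stabilizer in $\on{SL}_3(\Z)$ and the fiber sizes in the map $(I,\delta)\mapsto [I]$ need not be uniform; Bhargava--Varma handle this by showing that orders with nontrivial stabilizer contribute negligibly and by working with the narrow class group and oriented ideal group rather than $\on{Cl}$ and $\mc{I}$ directly, before passing back. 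Your sketch would need that step made explicit to go through for arbitrary orders.
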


Just as Theorem~\ref{thm-inc} was a precursor to Theorem~\ref{thm-main2tors}, we have the following result of Ho, Shankar, and Varma, which holds for \emph{any} $N$ and was a precursor to Theorem~\ref{thm-main2tors2} in the case $N = 3$:

\begin{theorem}[\protect{\cite[Theorem~6]{MR3782066}}] \label{thm-inc2}
Let $U_N(\Z)^{(r)}$ be the set of irreducible forms $f \in U_N(\Z)$ having $r$ real roots and $\frac{N-r}{2}$ pairs of complex roots. When forms $f \in U_N(\Z)^{(r)}$, primitive if $N > 3$, are enumerated by height, the average size of $\on{Cl}(R_f)[2]$ is $$\displaystyle 1 + 2^{1-\frac{N+r}{2} } \times \underset{\substack{f \in U_N(\Z)^{(r)} \\ {\tiny f \text{ prim.~if }N > 3}}}{\on{Avg}} \#\mc{I}(R_f)[2].$$
\end{theorem}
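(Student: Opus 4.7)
The plan is to use the Bhargava--Wood orbit parametrization (which expresses the count of $\on{SL}_N(\Z)$-orbits on $W_N(\Z)$ with fixed invariant $f$ in terms of ideal-theoretic data of $R_f$), combined with the bijection between reducible orbits and $\mc{I}(R_f)[2]$ noted in the excerpt, to reduce the problem to counting \emph{irreducible} orbits in aggregate. We then apply the Bhargava--Shankar orbit-counting technique to average over $f$ of bounded height, yielding the desired formula.

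\medskip

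In more detail: for each irreducible $f \in U_N(\Z)^{(r)}$ (primitive when $N > 3$), the Bhargava--Wood parametrization identifies $\on{SL}_N(\Z)$-orbits on $\on{inv}^{-1}(f) \cap W_N(\Z)$ with equivalence classes of pairs $(I, \delta)$, where $I$ is a fractional $R_f$-ideal, $(\delta) = I^2$, and $\Norm(\delta)$ is a positive rational square, modulo $(I, \delta) \sim (cI, c^2\delta)$ for $c \in K_f^\times$. Orbits split by whether $\delta \equiv 1 \pmod{K_f^{\times 2}}$ (reducible, in bijection with $\mc{I}(R_f)[2]$) or not (irreducible, classified further by $[I] \in \on{Cl}(R_f)[2]$). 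The fiber over each $2$-torsion class of $\on{Cl}(R_f)$ is controlled by $R_f^\times/R_f^{\times 2}$ restricted to units whose norm is a positive rational square; by Dirichlet's unit theorem, this fiber has generic size $2^{(N+r)/2 - 1}$ (since $R_f^\times$ has rank $\tfrac{N+r}{2} - 1$). Summing over classes yields an identity of the form
\[
\#\{\text{irreducible orbits with invariant }f\} \;=\; 2^{(N+r)/2 - 1} \cdot \#\on{Cl}(R_f)[2] \;-\; \#\mc{I}(R_f)[2]
\]
(in the appropriate averaged sense over $f$). Next, we sum this identity over $f$ with $\rmH(f) < X$ (primitive when $N > 3$) and apply the Bhargava--Shankar orbit-counting technique to the left-hand side: the weighted count of irreducible $(A, B) \in W_N(\Z)$ with $\rmH(\on{inv}(A, B)) < X$ is expressed as an integral over a fundamental domain for $\on{SL}_N(\Z)\backslash\on{SL}_N(\R)$ of the number of lattice points of $W_N(\Z)$ in a family of translates of a bounded region in $W_N(\R)$. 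By the averaging-over-fundamental-domains technique and the change of variables $\on{inv} : W_N \to U_N$, the main body contributes $2^{(N+r)/2 - 1} \cdot \#\{f \in U_N(\Z)^{(r)} : \rmH(f) < X, \text{prim.\ if }N > 3\}$, the factor being the number of real $\on{SL}_N(\R)$-orbits on $W_N(\R)$ with invariant of signature $(r,(N-r)/2)$. Dividing gives $\underset{f}{\on{Avg}}\,\#\{\text{irreducible orbits}\} = 2^{(N+r)/2 - 1}$, and substituting into the identity above yields
\[
2^{(N+r)/2 - 1} \cdot \underset{f}{\on{Avg}}\,\#\on{Cl}(R_f)[2] \;-\; \underset{f}{\on{Avg}}\,\#\mc{I}(R_f)[2] \;=\; 2^{(N+r)/2 - 1},
\]
which rearranges to the theorem.

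\medskip

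The main obstacle is the cusp analysis of the fundamental domain for $\on{SL}_N(\Z)\backslash\on{SL}_N(\R)$ acting on $W_N(\R)$: this domain is non-compact, and in its cusps many lattice points of $W_N(\Z)$ correspond to reducible orbits or to $f$'s that are not primitive. Following Bhargava's cusp-reduction technique (and its streamlining by Bhargava--Shankar in their work on Selmer groups of elliptic curves), one must carefully stratify the cusps, show that the non-negligible cusp contributions correspond precisely to reducible orbits—matching the subtracted $\#\mc{I}(R_f)[2]$ term in the identity above—or are absorbed into lower-order error terms, and apply Ekedahl-type sieves uniformly in the modulus to enforce the primitivity condition when $N > 3$. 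For $N = 3$ this was carried out by Bhargava--Varma; the extension to general odd $N \geq 3$ by Ho, Shankar, and Varma requires an intricate stratification of the higher-dimensional cusps together with control over stabilizer groups throughout.
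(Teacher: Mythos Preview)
The paper does not contain its own proof of this statement: Theorem~\ref{thm-inc2} is quoted from Ho--Shankar--Varma~\cite[Theorem~6]{MR3782066} and used as a black-box input (combined with Theorem~\ref{thm-z2z32} to deduce Theorem~\ref{thm-main2tors2}). So there is nothing in the paper to compare your proposal against.

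That said, your sketch is a reasonable high-level outline of the Ho--Shankar--Varma argument. Two cautions. First, the identity you write down,
\[
\#\{\text{irreducible orbits over }f\} \;=\; 2^{(N+r)/2-1}\cdot\#\on{Cl}(R_f)[2]\;-\;\#\mc{I}(R_f)[2],
\]
is not literally true for each $f$: the fiber of the map $(I,\delta)\mapsto [I]$ has size governed by the group of units modulo squares with square norm, which need not equal $2^{(N+r)/2-1}$ pointwise (the unit rank can drop for non-maximal orders, and the norm condition introduces a factor depending on whether $-1$ is a square times a norm). In~\cite{MR3782066} this is handled by working with the set $H(f)$ of pairs $(I,\delta)$ directly and relating $\#H(f)$ to $\#\on{Cl}(R_f)[2]$, $\#\on{Cl}^+(R_f)[2]$, and $\#\mc{I}(R_f)[2]$ via an exact sequence, rather than by the shortcut you suggest. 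Second, the claim that the main body of the geometry-of-numbers count contributes exactly $2^{(N+r)/2-1}\cdot\#\{f:\on{H}(f)<X\}$ requires matching the local volumes in the mass formula with the count of $\on{SL}_N(\Z_p)$-orbits over each $f$; this is not just the count of real orbits but a full local-global computation, and the primitivity sieve for $N>3$ must be interleaved with it. Your final paragraph correctly identifies the cusp analysis as the technical heart.
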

\end{subtheorem}

To determine precise numerical values of the average $2$-torsion in the class group for the families of orders considered in Theorems~\ref{thm-inc} and~\ref{thm-inc2}, one needs to determine the average $2$-torsion in their ideal groups. Via the Bhargava--Wood parametrization, this amounts to counting reducible $\on{SL}_N(\Z)$-orbits on $W_N(\Z)$. Systematic methods for counting reducible orbits were not available until recently, when the author, in joint work with Shankar, Siad, and Varma, developed a new technique that applies to reducible orbits of many representations of importance in arithmetic statistics~\cite{cuspy}. 

In the context of the action of $\on{SL}_N$ on $W_N$, our new technique proceeds according to the following series of steps. First, we prove in \S\ref{sec-pfthmain2} (see Proposition~\ref{prop-sasymp}) that the count of reducible $\on{SL}_N(\Z)$-orbits on $W_N(\Z)$ lying above irreducible binary forms is the same as the corresponding count for $G_N(\Z)$-orbits on $W_N^0(\Z)$, where $G_N \subset \on{SL}_N$ is a certain closed subgroup and $W_N^0 \subset W_N$ is a certain linear subspace all of whose $\Q$-points are reducible (see \S\S\ref{sec-thesub}--\ref{sec-thesubway} for the precise definitions of $G_N$ and $W_N^0$). Second, we prove that the action of $G_N$ on $W_N^0$ satisfies the following strong local-to-global principle:
\begin{theorem} \label{thm-stronglocglob}
Let $f \in U_N(\Z)$ be a binary $N$-ic form with nonzero discriminant. For each prime $p$, choose $(A_p,B_p) \in \on{inv}^{-1}(f) \cap W_N^0(\Z_p)$. Then there exists $(A,B) \in W_N^0(\Z)$, unique up to the action of $G_N(\Z)$, such that $(A,B)$ is $G_N(\Z_p)$-equivalent to $(A_p,B_p)$ for each prime $p$.
\end{theorem}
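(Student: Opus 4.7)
The plan is to reduce Theorem~\ref{thm-stronglocglob} to the classical local-to-global principle for fractional ideals of the order $R_f$. Concretely, I would aim to establish, for each prime $p$ and also globally, a canonical bijection between $G_N(\Z_p)$-orbits (respectively $G_N(\Z)$-orbits) on the locus $W_N^0 \cap \on{inv}^{-1}(f)$ and the $2$-torsion of the ideal group of $R_f \otimes \Z_p$ (respectively of $R_f$). The fact that a fractional ideal of $R_f$ is determined by and assembled from its $p$-adic completions would then yield the theorem.

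\textit{Step 1 (Refined orbit parametrizations).} Using Wood's generalization~\cite{MR3187931} of the Bhargava orbit parametrization, I would construct canonical bijections
\[
\{G_N(\Z)\text{-orbits on }W_N^0(\Z) \cap \on{inv}^{-1}(f)\} \;\longleftrightarrow\; \mc{I}(R_f)[2]
\]
and, for each prime $p$,
\[
\{G_N(\Z_p)\text{-orbits on }W_N^0(\Z_p) \cap \on{inv}^{-1}(f)\} \;\longleftrightarrow\; \mc{I}(R_f \otimes \Z_p)[2],
\]
compatible with the natural localization maps. The hyperplane condition defining $W_N^0$ should force the ``square-norm generator $\delta$'' appearing in the Bhargava--Wood data to be trivial, so that what remains is a $2$-torsion ideal class; the restriction from $\on{SL}_N$ to the smaller subgroup $G_N$ should further rigidify this data so as to pin down a concrete $2$-torsion ideal in each class.

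\textit{Step 2 (Local-to-global for ideals).} Given Step 1, the theorem reduces to showing that the natural map
\[
\mc{I}(R_f)[2] \;\longrightarrow\; \prod_p \mc{I}(R_f \otimes \Z_p)[2]
\]
is a bijection onto the subset of tuples $(I_p)_p$ with $I_p = R_f \otimes \Z_p$ for almost every $p$. This is standard: a fractional ideal of $R_f$ is the intersection of its $p$-adic completions, and being a fractional ideal that squares to the unit ideal is a purely local property. The ``almost all'' condition is automatic in our setting: for $p \nmid \on{disc}(f)$, the completion $R_f \otimes \Z_p$ is the maximal order in the \'etale algebra $K_f \otimes \Q_p$, whose ideal group is torsion-free, so the only admissible local choice is $I_p = R_f \otimes \Z_p$.

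\textit{Main obstacle.} The principal difficulty is Step 1, and specifically the verification that $G_N(\Z_p)$-orbits on $W_N^0(\Z_p) \cap \on{inv}^{-1}(f)$ biject with $2$-torsion \emph{ideals} rather than some intermediate object like ideal classes modulo units. Accomplishing this will require an explicit construction of orbit representatives $(A_I, B_I) \in W_N^0$ for each $2$-torsion ideal $I$, along with a careful computation of their stabilizers in $G_N$. The analogous computation on all of $W_N$ under $\on{SL}_N$ was carried out in~\cite{MR3187931}; the new input is to check that passing to the hyperplane $W_N^0$ and the smaller group $G_N$ eliminates exactly the residual ambiguity of choosing a generator up to units.
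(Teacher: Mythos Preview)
Your approach differs substantially from the paper's, and Step~1 contains a genuine gap.

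The paper does not construct any arithmetic parametrization of $G_N$-orbits. Instead it applies an abstract criterion from~\cite[Theorem~22]{cuspy}: the local-to-global principle holds for the action of $G$ on $W$ as soon as (i) $G$ has class number $1$ over $\Q$, (ii) $G(\C)$ acts simply transitively on each fiber $\on{inv}^{-1}(f)\cap W(\C)$ with $\on{disc}(f)\neq 0$, and (iii) $\on{inv}^{-1}(f)\cap W(\Z)\neq\varnothing$ for each such integral $f$. Condition~(i) is Proposition~\ref{prop-classone}, verified by writing $G_N$ as the Frobenius product of a unipotent subgroup and a block-diagonal subgroup, each of class number~$1$; condition~(ii) is Proposition~\ref{prop-ver2}, proved by explicit row-reduction over $\C$; condition~(iii) follows from the polynomial section $\sigma_0$. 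No ideal theory enters at all.

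The bijection you posit in Step~1 is not correct as stated: it is \emph{not} true that $G_N(\Z_p)$-orbits on $W_N^0(\Z_p)\cap\on{inv}^{-1}(f)$ are parametrized by $\mc{I}(R_f\otimes\Z_p)[2]$. Only the \emph{projective} orbits (Definition~\ref{def-proj}, Theorem~\ref{thm-bharg}) correspond to honest $2$-torsion ideals, and non-projective elements of $W_N^0(\Z_p)$ do occur. This is visible already for $N=3$ by comparing local masses: Proposition~\ref{prop-specialvol} gives $(1-p^{-2})^{-1}(1-p^{-3})^{-1}$ for the average number of all $G_3(\Z_p)$-orbits over $f\in U_3(\Z_p)$, whereas the computation in \S\ref{sec-pf2tors} gives the strictly smaller value $1+p^{-2}$ when one restricts to projective orbits. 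The correct arithmetic target for \emph{all} $G_N$-orbits would be a class of $R_f$-modules with extra basing data that need not be invertible, and the local-to-global principle for that larger class is not the routine ideal-theoretic statement you invoke in Step~2. Even if you restricted your argument to projective local data, you would recover only a projective version of Theorem~\ref{thm-stronglocglob}, which is insufficient for the downstream applications (Theorems~\ref{thm-main2} and~\ref{thm-main1} require counting all orbits, not just projective ones).
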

To prove Theorem~\ref{thm-stronglocglob}, we rely on~\cite[Theorem~22]{cuspy}, which is a general result that provides criteria under which the integral orbits of a representation satisfy a local-to-global principle. Theorem~\ref{thm-stronglocglob} follows by verifying that the representation of $G_N$ on $W_N^0$ satisfies these criteria.

As a consequence of Theorem~\ref{thm-stronglocglob}, counting $G_N(\Z)$-orbits on $W_N^0(\Z)$ amounts to counting $G_N(\Z_p)$-orbits on $W_N^0(\Z_p)$ for every prime $p$. In \S\ref{sec-themain2} we combine this local-to-global principle with results of Bhargava--Shankar--Wang~\cite{sqfrval2} to obtain an asymptotic for the count of reducible $\on{SL}_N(\Z)$-orbits on $W_N(\Z)$ satisfying certain infinite families of congruence conditions, which we define precisely as folows:

\begin{defn}
We call a subset $\mathfrak{S} \subset W_N^0(\Z)$ a \emph{big family in $W_N^0(\Z)$} if $\mathfrak{S} = \on{inv}^{-1}(U_N(\Z)^{(r)}) \cap W_N^0(\Z) \cap \bigcap_p \mathfrak{S}_p$, where the sets $\mathfrak{S}_p \subset W_N^0(\Z_p)$ satisfy the \mbox{following properties:}
\begin{enumerate}[leftmargin=20pt]
\item[(1)] $\mathfrak{S}_p$ is $G_N(\Z_p)$-invariant and is the preimage under reduction modulo $p^j$ of a nonempty subset of $W_N^0(\Z/p^j\Z)$ for some $j > 0$ for each $p$; and
\item[(2)] For all sufficiently large $p$, $\mathfrak{S}_p$ contains all elements $(A,B) \in W_N^0(\Z_p)$ such that $\lambda(A,B)$ is a $p$-adic unit, where $\lambda$ is a certain relative invariant for the action of $G_N$ on $W_N^0$ over $\Z$ (see Proposition~\ref{prop-castle} for the precise definition of $Q$).
\end{enumerate}
We call a subset $S \subset W_N(\Z)$ a \emph{big family in $W_N(\Z)$} if $S = \on{inv}^{-1}(U_N(\Z)^{(r)}) \cap \bigcap_p S_p$, where the sets $S_p \subset W_N(\Z_p)$ are $\on{SL}_N(\Z_p)$-invariant, and the intersections $\mathfrak{S}_p = S_p \cap W_N^0(\Z_p)$ define a big family $\mathfrak{S} = \on{inv}^{-1}(U_N(\Z)^{(r)}) \cap W_N^0(\Z) \cap \bigcap_p \mathfrak{S}_p$ in $W_N^0(\Z)$.
\end{defn}
Define the \emph{height} of $(A,B)$ by $\on{H}(A,B) \defeq \on{H}(\on{inv}(A,B))$. The following result gives an asymptotic for the count of reducible $\on{SL}_N(\Z)$-orbits of bounded height in a big family in $W_N(\Z)$:
\begin{theorem} \label{thm-main2}
Let $X > 0$, and let $\on{N}_N^{(r)}(X)$ be the number of binary forms in $U_N(\Z)^{(r)}$ whose coefficients are of size at most $X$. Let $S \subset \on{inv}^{-1}(U_N(\Z)^{(r)})$ be a big family in $W_N(\Z)$. Then the number of reducible $\on{SL}_N(\Z)$-orbits on $S$ of height less than $X$ is given by
$$\on{N}_N^{(r)}(X) \times \prod_p \int_{f \in U_N(\Z_p)} \#\left(\frac{\on{inv}^{-1}(f) \cap S_{p} \cap W_N^0(\Z_p)}{G_N(\Z_p)}\right)df + o(X^{N+1}),$$
where $df$ is the Haar measure on $U_N(\Z_p)$, normalized so that $\on{Vol}(U_N(\Z_p)) = 1$.
\end{theorem}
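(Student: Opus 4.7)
The plan is to reduce Theorem~\ref{thm-main2} to an equidistribution computation for $G_N(\Z)$-orbits on $W_N^0(\Z)$, and then pass from an infinite sum of local orbit counts to the product of local integrals by combining the strong local-to-global principle with the squarefree sieve of Bhargava--Shankar--Wang~\cite{sqfrval2}.

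First, I would apply Proposition~\ref{prop-sasymp} to replace the count of reducible $\on{SL}_N(\Z)$-orbits on $S$ of height less than $X$ by the count of $G_N(\Z)$-orbits on $\mathfrak{S} \defeq S \cap W_N^0(\Z)$ of height less than $X$, absorbing the difference into the $o(X^{N+1})$ error. Second, I would invoke Theorem~\ref{thm-stronglocglob}: for each $f \in U_N(\Z)^{(r)}$, the $G_N(\Z)$-orbits on $\on{inv}^{-1}(f) \cap \mathfrak{S}$ are in canonical bijection with $\prod_p (\on{inv}^{-1}(f) \cap \mathfrak{S}_p)/G_N(\Z_p)$. Summing over $f$ of height less than $X$ then gives
\[
\#\bigl(\{(A,B) \in \mathfrak{S} : \on{H}(A,B) < X\}/G_N(\Z)\bigr) = \sum_{\substack{f \in U_N(\Z)^{(r)} \\ \on{H}(f) < X}}\, \prod_p \#\!\left(\frac{\on{inv}^{-1}(f) \cap \mathfrak{S}_p}{G_N(\Z_p)}\right),
\]
so it remains to show that this sum equals $\on{N}_N^{(r)}(X)$ times the product of local integrals in the statement, up to $o(X^{N+1})$.

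Third, I would evaluate this sum by truncating the product at a parameter $M \to \infty$. For each prime $p \le M$, condition~(1) in the definition of a big family ensures that the local orbit count depends only on $f \bmod p^{j_p}$ for some fixed level $j_p$; standard equidistribution of integer points modulo $\prod_{p \le M} p^{j_p}$ then rewrites the $M$-truncated sum as $\on{N}_N^{(r)}(X)$ times $\prod_{p \le M} \int_{U_N(\Z_p)} \#\bigl((\on{inv}^{-1}(f) \cap \mathfrak{S}_p)/G_N(\Z_p)\bigr) df$, up to an error of $O_M(X^N)$. For the tail $p > M$, condition~(2) guarantees that the local count takes its generic value as soon as $\lambda(A,B) \in \Z_p^\times$; since $\lambda$ divides a fixed power of $\on{disc}(f)$, the Bhargava--Shankar--Wang squarefree sieve shows that the set of $f$'s for which some $p > M$ fails this condition contributes a proportion that tends to $0$ as $M \to \infty$.

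The main obstacle is making the tail estimate uniform in $p$: one must bound $\#\bigl((\on{inv}^{-1}(f) \cap \mathfrak{S}_p)/G_N(\Z_p)\bigr)$ on the ``bad locus'' where $p \mid \lambda(f)$ in terms of $v_p(\on{disc}(f))$, with constants independent of $p$, so that the sieve of~\cite{sqfrval2}, which controls how often the discriminant of a binary $N$-ic form is divisible by $p^2$ for large primes $p$, actually suffices to dominate the sum of bad local counts. Verifying this uniform bound requires a direct $p$-adic analysis of the representation of $G_N$ on $W_N^0$, and I expect this to constitute the main technical content of the argument; once such a bound is established, letting $M \to \infty$ produces the infinite product appearing in the statement and completes the proof.
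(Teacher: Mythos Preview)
Your overall plan is right: pass to $G_N(\Z)$-orbits on $W_N^0(\Z)$ via Proposition~\ref{prop-sasymp}, factor the count over $f$ into local orbit counts via Theorem~\ref{thm-stronglocglob}, and control the tail with input from~\cite{sqfrval2}. But the truncation you propose in the third step---cutting at primes $p\le M$---does not work, for two related reasons. First, condition~(1) in the definition of a big family says only that the \emph{set} $\mathfrak{S}_p$ is determined modulo $p^{j_p}$; it says nothing about the function $f\mapsto\#\bigl((\on{inv}^{-1}(f)\cap\mathfrak{S}_p)/G_N(\Z_p)\bigr)$, which is in general unbounded on $U_N(\Z_p)$ and hence not locally constant at any fixed level, so the equidistribution step for $p\le M$ cannot be carried out as stated. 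Second, even granting a divisibility relation between $Q$ and $\disc(f)$, the squarefree sieve of~\cite{sqfrval2} controls forms with $p^2\mid\disc(f)$ for large $p$, whereas your tail argument needs to control forms with merely $p\mid\disc(f)$ for some $p>M$---and almost every $f$ of height $<X$ has such a prime factor.

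The paper's remedy is to slice not by the prime but by the global value $\mathfrak{b}=|Q(A,B)|$. For each fixed $\mathfrak{b}$, Proposition~\ref{prop-singleQ} shows that only the finitely many primes $p\mid\mathfrak{b}$ contribute nontrivially, and Lemma~\ref{lem-padicbound}---precisely the uniform $p$-adic bound you anticipated---gives $\#\bigl((\on{inv}^{-1}(f)\cap\mathfrak{S}(\mathfrak{b})_p)/G_N(\Z_p)\bigr)\le\mathfrak{b}^{O_n(1)}$ uniformly in $f$. This makes the local counts bounded and determined by finitely many congruence conditions, so the sieve on $U_N(\Z)$ applies to count $f$ in each congruence class; summing over $\mathfrak{b}<M$ gives the lower bound. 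For the tail $\mathfrak{b}\ge M$, the relevant input from~\cite{sqfrval2} is not the squarefree sieve on discriminants but the direct geometry-of-numbers bound $O(X^{N+1}/M)$ on the number of reducible orbits with $|Q|\ge M$ and height $<X$. Choosing $M$ to be a small power of $X$ balances the two errors.
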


Theorem~\ref{thm-main2} expresses the asymptotic count of reducible $\on{SL}_N(\Z)$-orbits on $S$ of bounded height in terms of a product of local integrals. The integrand of the integral at $p$ --- namely, the number of $G_N(\Z_p)$-orbits of pairs $(A,B) \in S_p \cap W_N^0(\Z_p)$ with $\on{inv}(A,B) = f$ --- appears to be quite difficult to evaluate in general, even for small degrees $N$ and simple choices of the sets $S_p$. On the other hand, the integral at $p$ can be rendered more tractable by performing a suitable change-of-variables, where instead of integrating over $f \in U_N(\Z_p)$, one integrates over $(A,B) \in W_N^0(\Z_p)$. Upon performing this change-of-variables at each prime $p$, we obtain the following variant of Theorem~\ref{thm-main2}:

\begin{theorem} \label{thm-main3}
Let $X > 0$, and let $S \subset \on{inv}^{-1}(U_N(\Z)^{(r)})$ be a big family in $W_N(\Z)$. Then the number of reducible $\on{SL}_N(\Z)$-orbits on $S$ of height less than $X$ is given by
$$\on{N}_N^{(r)}(X) \times \prod_p \xi_{p,n} \int_{w \in S_p \cap W_N^0(\Z_p)} |\lambda(w)|_p\, dw + o(X^{N+1}),$$
where $\xi_{p,n} \defeq (1-p^{-1})^{-1}(1 - p^{-n-1})^{-1} \times \prod_{i = 2}^{n} (1 - p^{-i})^{-1}$, $|-|_p$ denotes the usual $p$-adic absolute value, and where $dw$ denotes the Haar measure on $W_N^0(\Z_p)$, normalized so that $\on{Vol}(W_N^0(\Z_p)) = 1$.
\end{theorem}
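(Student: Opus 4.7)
The plan is to deduce Theorem~\ref{thm-main3} from Theorem~\ref{thm-main2} by performing, at each prime $p$ independently, a Jacobian change-of-variables that converts the orbit-counting integral over $U_N(\Z_p)$ into an integral over $W_N^0(\Z_p)$ weighted by $|\lambda|_p$. This is the standard device used to pass from orbit-counting formulas to more tractable measure-theoretic expressions, as in the orbit integration technique of Bhargava--Shankar and its descendants.

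At each prime $p$, the first step is to verify that the $G_N(\Q_p)$-action on $W_N^0(\Q_p)$ has trivial generic stabilizer: a pair $(A,B) \in W_N^0(\Z_p)$ with $\on{disc}(\on{inv}(A,B)) \neq 0$ admits a unique maximal $\Q_p$-isotropic subspace witnessing reducibility, so its stabilizer in $G_N$ — which already lies inside the stabilizer of this subspace — collapses to the identity. Next, with the generic stabilizer trivial, a standard $p$-adic slice-and-fiber computation yields, for any $G_N(\Z_p)$-invariant measurable $\psi:W_N^0(\Z_p)\to\R$, the identity
\[
\int_{W_N^0(\Z_p)} \psi(w)\,|\lambda(w)|_p\, dw \;=\; \on{Vol}(G_N(\Z_p)) \cdot \int_{U_N(\Z_p)} \#\!\left( \frac{\on{inv}^{-1}(f)\cap \on{supp}(\psi)\cap W_N^0(\Z_p)}{G_N(\Z_p)} \right) df.
\]
Applying this with $\psi = 1_{S_p \cap W_N^0(\Z_p)}$, substituting into Theorem~\ref{thm-main2}, and recognizing $\xi_{p,n} = \on{Vol}(G_N(\Z_p))^{-1}$ yields the asserted formula.

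Two nontrivial inputs underlie the above identity: (i) the identification of the Jacobian of the invariant map $\on{inv}:W_N^0 \to U_N$, with respect to the normalized Euclidean measures on source and target, with the relative invariant $\lambda$ from Proposition~\ref{prop-castle}; and (ii) the computation of $\on{Vol}(G_N(\Z_p))$. For (i), $\lambda$ is defined so as to transform under $G_N$ by the appropriate character, and the task is to verify that it agrees — up to a unit scalar that can be absorbed into the measure normalizations — with $\det(\partial\on{inv}/\partial w)$ restricted to an algebraic slice transverse to $G_N$-orbits; this is a concrete finite computation performed by choosing an explicit local slice. For (ii), the factorization $\xi_{p,n}^{-1} = (1-p^{-1})(1-p^{-n-1})\prod_{i=2}^{n}(1-p^{-i})$ reflects a Levi--unipotent decomposition of $G_N$, with the factors corresponding to a rank-one torus, a block of rank $n-1$, and a complementary piece of dimension $n+1$; the volume then follows from a Bruhat-style product formula over $\Z_p$.

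The main obstacle is step (i), namely pinning down that $\lambda$ is precisely (a unit multiple of) the Jacobian of $\on{inv}$ with the correct normalization; once this comparison of top-degree differential forms is in place, the remainder of the argument reduces to routine Haar measure bookkeeping on the algebraic group $G_N$ and an application of the identity above to the output of Theorem~\ref{thm-main2}.
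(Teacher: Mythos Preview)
Your proposal is correct and follows essentially the same route as the paper: apply a Jacobian change of variables (Proposition~\ref{prop-jac}) to rewrite each orbit-counting integral over $U_N(\Z_p)$ as an integral of $|\lambda|_p$ over $W_N^0(\Z_p)$, compute $\on{Vol}(G_N(\Z_p))=\xi_{p,n}^{-1}$ (Lemma~\ref{lem-gvol}), and substitute into Theorem~\ref{thm-main2}.

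The one point where your outline and the paper diverge is your step~(i), which you rightly flag as the main obstacle. You propose to verify by a direct slice computation that the Jacobian of $\on{inv}$ equals $\lambda$ up to a unit. The paper's Proposition~\ref{prop-jac} establishes only that the Jacobian equals $\mc{J}\cdot\lambda$ for some fixed $\mc{J}\in\Q^\times$ (by checking, exactly as you anticipate, that $\lambda$ transforms under $G_N$ by the modular character $\rho$ of~\eqref{eq-defrhoh}), and makes no attempt to compute $\mc{J}$ directly. Instead, the constant is pinned down afterward (Lemma~\ref{lem-ksiolajidebt}) by an indirect normalization: Corollary~\ref{cor-oneandonly} guarantees exactly one $G_N(\Z)$-orbit of unit $\lambda$-invariant above each $f$, and matching this known count against the asymptotic forces the relevant product of constants to equal~$1$. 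Your direct computation is in principle feasible but is not actually carried out; the paper's comparison trick sidesteps it entirely.
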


The formulation of the asymptotic given in Theorem~\ref{thm-main3} is far more conducive to evaluation in specific examples. For instance, taking $S_p = W_N(\Z_p)$ for every prime $p$, we obtain the following asymptotic formula for the total count of reducible $\on{SL}_N(\Z)$-orbits on $W_N(\Z)$:

\begin{theorem} \label{thm-main1}
Let $X > 0$. Then the number of reducible $\on{SL}_N(\Z)$-orbits on $\on{inv}^{-1}(U_N(\Z)^{(r)})$ of height less than $X$ is given by
$$\on{N}_N^{(r)}(X) \times \prod_{i = 2}^N \zeta(i) + o(X^{N+1}).$$
\end{theorem}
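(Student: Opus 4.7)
The plan is to deduce Theorem~\ref{thm-main1} from Theorem~\ref{thm-main3} by specializing to the case $S_p = W_N(\Z_p)$ for every prime $p$, so that $S = \on{inv}^{-1}(U_N(\Z)^{(r)})$; the definition of ``big family'' is trivially satisfied in this case, since $\mathfrak{S}_p = W_N^0(\Z_p)$ is the full preimage modulo $p$ and automatically contains every element regardless of the value of $\lambda$. With this choice of $S$, Theorem~\ref{thm-main1} reduces to the Euler-product identity
\begin{equation*}
\prod_p \xi_{p,n} \int_{W_N^0(\Z_p)} |\lambda(w)|_p\, dw = \prod_{i=2}^N \zeta(i),
\end{equation*}
which I would verify prime-by-prime: expanding $\xi_{p,n}$ and comparing factors, it suffices to prove that $\int_{W_N^0(\Z_p)} |\lambda(w)|_p\, dw = (1-p^{-1}) \prod_{i=n+2}^N (1-p^{-i})^{-1}$ for each prime $p$.

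To compute this local integral, I would apply a Jacobian change of variables. Because $\lambda$ is a relative $G_N$-invariant on $W_N^0$ of the correct weight, the differential form $|\lambda(w)|_p\, dw$ is, up to a normalization constant, the pullback of the natural biinvariant product measure on $G_N(\Z_p) \times U_N(\Z_p)$ under the action map $G_N \times_\Z \Sigma \to W_N^0$, where $\Sigma$ is a Kostant-type section of $\on{inv}: W_N^0 \to U_N$ passing through the distinguished reducible orbit associated with the trivial $2$-torsion ideal. Performing this change of variables expresses $\int_{W_N^0(\Z_p)} |\lambda(w)|_p\, dw$ as a product of $\on{Vol}(G_N(\Z_p))$ and $\on{Vol}(U_N(\Z_p)) = 1$, weighted by the number of integral $G_N(\Z_p)$-orbits over a generic fiber of $\on{inv}$. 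By the Bhargava--Wood parametrization over $\Z_p$, this orbit count collapses to $1$ for generic $f$: reducible orbits correspond to $2$-torsion ideals of $R_f \otimes \Z_p$, and over a generic fiber the only such ideal is the unit ideal.

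Combining the Jacobian computation with the formula $\on{Vol}(G_N(\Z_p)) = p^{-\dim G_N} \cdot \#G_N(\F_p)$ and an explicit point count for $\#G_N(\F_p)$ derived from the concrete description of $G_N \subset \on{SL}_N$, one finds that $\xi_{p,n} \cdot \int_{W_N^0(\Z_p)} |\lambda(w)|_p\, dw = \prod_{i=2}^N (1-p^{-i})^{-1}$ at each prime $p$; taking the product over all $p$ yields the desired $\prod_{i=2}^N \zeta(i)$. The main obstacles are (i) the Jacobian bookkeeping, which requires knowing the weight of $\lambda$ under the character group of $G_N$ as well as the precise normalizations of Haar measure on $G_N$ and of $\on{inv}$-compatible measure on $U_N$; and (ii) verifying that generic fibers of $\on{inv}$ on $W_N^0(\Z_p)$ really consist of a single $G_N(\Z_p)$-orbit. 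The second point is the more delicate one, especially at $p = 2$, where the square-class bookkeeping in the Bhargava--Wood parametrization that governs reducible orbits tends to require extra care.
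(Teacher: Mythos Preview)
Your reduction to the Euler-product identity via Theorem~\ref{thm-main3} with $S_p = W_N(\Z_p)$ is correct, and the change of variables you invoke is exactly Proposition~\ref{prop-jac}: it gives
\[
\xi_{p,n}\int_{W_N^0(\Z_p)}|\lambda(w)|_p\,dw \;=\; \int_{U_N(\Z_p)} \#\left(\frac{\on{inv}^{-1}(f)\cap W_N^0(\Z_p)}{G_N(\Z_p)}\right)df
\]
(up to the constant $|\mc{J}|_p$, which is $1$ for almost all $p$). The gap is in your evaluation of the right-hand side. You argue that ``this orbit count collapses to $1$ for generic $f$'' by appealing to the Bhargava--Wood parametrization, but that parametrization describes \emph{reducible $\on{SL}_N(\Z_p)$-orbits on $W_N(\Z_p)$}, not $G_N(\Z_p)$-orbits on $W_N^0(\Z_p)$. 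These are genuinely different: a single reducible $\on{SL}_N(\Z_p)$-orbit typically meets $W_N^0(\Z_p)$ in many $G_N(\Z_p)$-orbits, one for each value of $|\lambda|_p$. Concretely, Proposition~\ref{prop-singleQ} only says there is a unique $G_N(\Z_p)$-orbit with $|\lambda|_p=1$; for every $f$ there are further orbits with $|\lambda|_p<1$, and it is precisely their contribution that produces the factor $\prod_{i=2}^{N}(1-p^{-i})^{-1}$. Your own argument reflects this inconsistency: if the orbit count were really $1$, your change of variables would give $\xi_{p,n}\int|\lambda|_p\,dw=1$, not $\prod_{i=2}^{N}(1-p^{-i})^{-1}$, and the final Euler product would be $1$ rather than $\prod_{i=2}^{N}\zeta(i)$.

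The paper supplies exactly the missing computation. Rather than trying to count $G_N(\Z_p)$-orbits directly, it introduces a \emph{second} change of variables (Proposition~\ref{prop-jac2}) that fibers $W_N^0$ over $W_N^{\on{top}}$ and then over the $\lambda$-line, together with an explicit fundamental domain for $(\on{SL}_n\times\on{SL}_{n+1})(\Z_p)$ acting on $W_N^{\on{top}}(\Z_p)$ (Proposition~\ref{prop-padicfund}). Summing over the strata $\mc{L}_{\vec a,\vec b}(p)$ indexed by the $p$-adic valuations along the anti-diagonals yields a geometric series whose closed form is $\prod_{i=2}^{N}(1-p^{-i})^{-1}$ (Proposition~\ref{prop-specialvol}); a separate argument (Lemma~\ref{lem-ksiolajidebt}) matches the two Jacobian constants. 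None of this can be shortcut by the single-orbit claim you make in (ii); that claim is exactly what fails.
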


Strikingly, Theorem~\ref{thm-main1} implies that the average number of reducible orbits lying above an integral binary $N$-ic form is equal to $\prod_{i = 2}^N \zeta(i)$, which is simply the fundamental volume of the group $\on{SL}_N$ (i.e., the volume of $\on{SL}_N(\Z) \backslash \on{SL}_N(\R)$ with respect to Haar measure on $\on{SL}_N(\R)$, suitably normalized). This phenomenon --- that the average number of reducible orbits lying above a given set of invariants equals the fundamental volume of the group --- holds for many other representations of interest in arithmetic statistics. Indeed, as proven in~\cite[Theorem~1]{cuspy}, this holds for the representation of the split orthogonal group on the space of $N$-ary quadratic forms for every integer $N \geq 3$, odd or even; furthermore, as explained in~\cite[Question 2 and subsequent discussion]{cuspy}, this also holds for the representations of $\on{GL}_2$ on binary cubic and quartic forms.

We now turn our attention to applying Theorem~\ref{thm-main3} to prove our main results on class group statistics, namely Theorems~\ref{thm-main2tors} and~\ref{thm-main2tors2}. We say that a pair $(A,B) \in W_N(\Z_p)$ is \emph{projective} if it corresponds to a $2$-torsion ideal class under the Bhargava--Wood parametrization. As explained in Definition~\ref{def-proj} (see \S\ref{sec-pf2tors}, to follow), the condition of projectivity is given by a congruence condition modulo $p$ for each prime $p$. 

It is possible to determine the proportion of projective elements of $W_N(\Z_p)$ for each prime $p$; this was essentially achieved in~\cite[\S6]{MR3369305} for $N = 3$ and~\cite[\S6.1]{MR3782066} for $N \geq 3$. On the other hand, to apply Theorem~\ref{thm-main3} it is necessary to determine the proportion of projective elements of $W_N^0(\Z_p)$ \emph{having a specified $Q$-invariant}, and this appears to be intractable in general. Nonetheless, when $N = 3$, the condition of being projective is not too complicated, and the relevant $p$-adic integrals can be evaluated. We thus obtain the following result giving the average $2$-torsion in the ideal groups of cubic orders enumerated by discriminant:

\begin{subtheorem}{theorem}
\begin{theorem} \label{thm-z2z3}
When either totally real or complex irreducible cubic orders $\mc{O}$ are enumerated by the absolute values of their discriminants, the average $2$-torsion in the ideal group is $\frac{\zeta(2)}{\zeta(4)}$.
\end{theorem}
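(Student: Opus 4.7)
The plan is to apply Theorem \ref{thm-main3} with $N = 3$ to the big family $S \subset W_3(\Z)$ consisting of pairs that are projective at every prime, lying above the irreducible forms in $U_3(\Z)^{(r)}$. By the Bhargava--Wood parametrization, the set of projective reducible $\on{SL}_3(\Z)$-orbits of $(A,B) \in W_3(\Z)$ with $\on{inv}(A,B) = f$ is in bijection with the $2$-torsion subgroup $\mc{I}(R_f)[2]$ of the ideal group of $R_f$. Hence Theorem \ref{thm-main3} yields
\begin{equation*}
\sum_{\substack{f \in U_3(\Z)^{(r)} \\ \on{H}(f) < X}} \#\mc{I}(R_f)[2] \;=\; \on{N}_3^{(r)}(X) \times \prod_p \xi_{p,1} \int_{w \in S_p \cap W_3^0(\Z_p)} |Q(w)|_p\, dw \;+\; o(X^4).
\end{equation*}

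To pass from forms ordered by height to cubic orders $\mc{O}$ enumerated by $|\on{disc}(\mc{O})|$, I would invoke the Delone--Faddeev--Levi correspondence together with the standard comparison between height-ordered forms and discriminant-ordered orders sketched in the paragraph preceding Theorem \ref{thm-main2tors2}. Since the local factors in the above Euler product are independent of the signature $r$, this comparison yields the same average
\begin{equation*}
\underset{\mc{O}}{\on{Avg}}\; \#\mc{I}(\mc{O})[2] \;=\; \prod_p \xi_{p,1} \int_{S_p \cap W_3^0(\Z_p)} |Q(w)|_p\, dw
\end{equation*}
for both the totally real and the complex families, explaining why the stated value does not depend on signature.

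The hard part is the explicit local computation, namely evaluating the above $p$-adic integral for every prime $p$. The density of projective pairs inside all of $W_3(\Z_p)$ was essentially computed in \cite[\S 6]{MR3369305}, but here one must further restrict to the hyperplane $W_3^0(\Z_p)$ and weight by $|Q|_p$. For $N = 3$, the projectivity condition is a simple congruence modulo $p$ (as asserted just before the statement of the theorem and as I would make precise via Definition~\ref{def-proj}), and this tractability is precisely what breaks down for larger odd $N$. I expect each local factor to simplify to $1 + p^{-2}$, so that taking the Euler product yields
\begin{equation*}
\prod_p (1 + p^{-2}) \;=\; \prod_p \frac{1 - p^{-4}}{1 - p^{-2}} \;=\; \frac{\zeta(2)}{\zeta(4)},
\end{equation*}
matching the claimed value. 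The whole argument thus reduces to this single explicit $p$-adic volume calculation --- which is exactly why the result is obtained only for $N = 3$.
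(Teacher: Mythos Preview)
Your overall strategy is right and matches the paper: reduce to a product of local $p$-adic integrals over projective pairs in $W_3^0(\Z_p)$, show each local factor equals $1+p^{-2}$, and take the Euler product. The paper carries out exactly this local computation (Proposition~38) by slicing $W_3^0(\Z_p)$ into level sets of $\nu_p(Q)$ and counting projective residues, confirming your expected value.

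The gap is in your passage from height-ordering to discriminant-ordering. You invoke a ``standard comparison'' based on the heuristic paragraph before Theorem~\ref{thm-main2tors2}, but that paragraph is explicitly heuristic (``we \emph{expect} that the averages \ldots\ should remain the same''); the paper does \emph{not} deduce one ordering from the other. Instead it proves a separate asymptotic directly in the discriminant ordering: the second half of Theorem~\ref{thm-acceptcubic2}, equation~\eqref{eq-midacceptcubic2GL2}, counts $(\on{GL}_2 \times G_3)(\Z)$-orbits on $W_3^0(\Z)$ of bounded $|\Delta|$. The proof of that variant replaces the two inputs that are ordering-specific --- the density of forms satisfying local conditions, and the tail estimate for large $Q$-invariant --- by the Bhargava--Shankar--Tsimerman asymptotic for $\on{GL}_2(\Z)$-orbits of binary cubic forms \cite[Theorem~26]{MR3090184} and by Bhargava's estimate \cite[Proposition~23]{MR2183288}, respectively. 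One also needs the $\on{GL}_2$-equivariant version of Proposition~\ref{prop-sasymp}. Without these ingredients there is no rigorous route from the height-ordered Theorem~\ref{thm-main3} to an average over cubic orders by discriminant; your proposal would as written prove Theorem~\ref{thm-z2z32} but not Theorem~\ref{thm-z2z3}.
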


We also simultaneously obtain the following variant of Theorem~\ref{thm-z2z3} for the family of cubic orders defined by binary cubic forms enumerated by height:

\begin{theorem} \label{thm-z2z32}
When forms $f \in U_3(\Z)^{(r)}$ are enumerated by height, the average $2$-torsion in the ideal group is $\frac{\zeta(2)}{\zeta(4)}$.
\end{theorem}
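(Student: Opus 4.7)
My plan is to apply Theorem~\ref{thm-main3} to the big family of projective pairs in $W_3(\Z)$. Under the Bhargava--Wood parametrization, the reducible $\on{SL}_3(\Z)$-orbits above $f$ that are \emph{projective} (in the sense of Definition~\ref{def-proj}) are in bijection with $\mc{I}(R_f)[2]$. I would thus take $S \subseteq W_3(\Z)$ to be the subset of pairs $(A,B) \in \on{inv}^{-1}(U_3(\Z)^{(r)})$ that are projective at every prime, so that $S_p \subseteq W_3(\Z_p)$ is the set of projective pairs at $p$. Since projectivity at $p$ is cut out by a congruence condition modulo $p$, and since every pair with $Q(A,B)$ a $p$-adic unit is projective for all sufficiently large $p$, the set $S$ is a big family in $W_3(\Z)$.

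Applying Theorem~\ref{thm-main3} to $S$, the count of reducible $\on{SL}_3(\Z)$-orbits on $S$ of height less than $X$ is
\[
\on{N}_3^{(r)}(X) \times \prod_p \xi_{p,1} \int_{S_p \cap W_3^0(\Z_p)} |Q(w)|_p \, dw + o(X^4),
\]
where $\xi_{p,1} = (1-p^{-1})^{-1}(1-p^{-2})^{-1}$. The left-hand side is precisely $\sum_{f \in U_3(\Z)^{(r)},\, \on{H}(f) < X} \#\mc{I}(R_f)[2]$, so the theorem will follow once the Euler product is shown to equal $\zeta(2)/\zeta(4) = \prod_p (1 + p^{-2})$; that is, each local factor $\xi_{p,1} \int_{S_p \cap W_3^0(\Z_p)} |Q(w)|_p \, dw$ should equal $1 + p^{-2}$.

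To evaluate this local factor at a prime $p$, I would stratify $W_3^0(\Z_p)$ according to the splitting behavior of the invariant cubic form $f = \on{inv}(w)$ modulo $p$ and, within each such stratum, according to the $p$-part of the index of $R_f$ in its integral closure. On each stratum, the projectivity condition is explicit (as worked out for $N = 3$ in \cite[\S6]{MR3369305}), and both the $p$-adic proportion of projective pairs and the $p$-adic valuation of $Q$ admit closed-form descriptions. Summing the contributions from the various strata and multiplying by $\xi_{p,1}$ should produce the claimed local value $1 + p^{-2}$.

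The main obstacle is this stratified local computation. While \cite[\S6]{MR3369305} determines the $p$-adic proportion of projective pairs in $W_3(\Z_p)$, Theorem~\ref{thm-main3} requires the finer statistic of projective pairs in the hyperplane $W_3^0(\Z_p)$ weighted by $|Q(w)|_p$, so one must track how the projectivity condition interacts with both $W_3^0$ and the relative invariant $Q$; the greatest care is needed on the non-maximal strata, which contribute the non-trivial portion of the answer. This is the same local computation needed for Theorem~\ref{thm-z2z3}; the only difference is that enumerating cubic orders by discriminant (as there) requires an additional step of integrating over a fundamental domain for $\on{GL}_2(\Z)$ on $U_3(\R)$ to convert from the height-ordering implicit in Theorem~\ref{thm-main3} to the discriminant-ordering.
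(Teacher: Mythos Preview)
Your reduction to the local integral is correct and matches the paper: applying Theorem~\ref{thm-main3} to the big family of projective pairs reduces Theorem~\ref{thm-z2z32} to showing that
\[
\xi_{p,1}\int_{S_p \cap W_3^0(\Z_p)} |Q(w)|_p\,dw \;=\; 1 + p^{-2}
\]
for each prime $p$, and this is exactly the content of the proposition proved in \S\ref{sec-pf2tors}.

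Where your proposal diverges from the paper is in the evaluation of this local integral. You propose to stratify $W_3^0(\Z_p)$ by the splitting type of $f = \on{inv}(w)$ modulo $p$ and by the $p$-part of the index of $R_f$ in its integral closure. This effectively undoes the change of variables that took you from Theorem~\ref{thm-main2} to Theorem~\ref{thm-main3}, and would require, for each splitting type and each index, controlling both the fiber of $\on{inv}$ and the distribution of $|Q|_p$ on that fiber. The paper instead works entirely in the explicit coordinates on $W_3^0(\Z_p)$ and never refers to $f$ at all. It slices by the level sets $L_k = \{w : \nu_p(Q(w)) = k-1\}$; since here $Q(A,B) = A_{12}B_{13} - A_{13}B_{12}$, each $L_k$ is described by a congruence on the $2\times 2$ matrix $M = \left[\begin{smallmatrix} A_{12} & A_{13} \\ B_{12} & B_{13}\end{smallmatrix}\right]$. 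The projectivity condition from Definition~\ref{def-proj} is then checked directly in these coordinates: a pair with $\ol{M} = 0$ is never projective, a pair with $\det \ol{M} \neq 0$ is always projective, and when $\ol{M} \neq 0$ but $\det \ol{M} = 0$, projectivity reduces (after an $H_2(\F_p)$-translate) to the nonvanishing mod $p$ of a single $2\times 2$ minor of $B$. The integral over each $L_k$ then becomes an elementary count of $M \in \on{Mat}_{2\times 2}(\Z/p^k\Z)$ with $\nu_p(\det M) = k-1$ and $\ol{M} \neq 0$. This coordinate approach is what makes the $N=3$ case tractable; your stratification by $f$-data could be pushed through in principle, but it is substantially more laborious and obscures the simple structure that the passage to $W_3^0$ was designed to expose.
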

\end{subtheorem}

Theorem~\ref{thm-main2tors} (resp., Theorem~\ref{thm-main2tors2}) now follows immediately upon combining Theorem~\ref{thm-inc} (resp., Theorem~\ref{thm-inc2} in the case $N = 3$) with Theorem~\ref{thm-z2z3} (resp., Theorem~\ref{thm-z2z32}). As for orders defined by binary forms of higher odd degree, we obtain the following weaker result by simply combining Theorem~\ref{thm-main1} with Theorem~\ref{thm-inc2}:

\begin{theorem}
Let $N > 3$. When primitive forms $f \in U_N(\Z)^{(r)}$ are enumerated by height, the average sizes of $\mc{I}(R_f)[2]$ and $\on{Cl}(R_f)[2]$ are bounded.
\end{theorem}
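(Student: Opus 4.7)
The plan is to deduce this result formally by combining Theorem~\ref{thm-main1} (the exact asymptotic for reducible orbit counts) with Theorem~\ref{thm-inc2} (the relation between $\on{Avg} \#\on{Cl}(R_f)[2]$ and $\on{Avg} \#\mc{I}(R_f)[2]$).

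First, I would recall from the Bhargava--Wood parametrization that for each primitive $f \in U_N(\Z)^{(r)}$, the set of reducible $\on{SL}_N(\Z)$-orbits on $\on{inv}^{-1}(f) \cap W_N(\Z)$ is in bijection with $\mc{I}(R_f)[2]$. Summing over primitive forms of bounded height and dropping the primitivity constraint on the right-hand side, this yields
$$\sum_{\substack{f \in U_N(\Z)^{(r)} \\ f \text{ prim.},\; \on{H}(f) \leq X}} \#\mc{I}(R_f)[2] \;\leq\; \#\left\{\text{reducible } \on{SL}_N(\Z)\text{-orbits on } \on{inv}^{-1}(U_N(\Z)^{(r)}) \text{ of height} \leq X\right\},$$
which by Theorem~\ref{thm-main1} equals $\on{N}_N^{(r)}(X) \cdot \prod_{i = 2}^N \zeta(i) + o(X^{N+1})$.

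Second, I would normalize by the count of primitive forms. A standard M\"obius sieve on the gcd of the coefficients shows that the number of primitive forms in $U_N(\Z)^{(r)}$ of height at most $X$ is $\zeta(N+1)^{-1} \cdot \on{N}_N^{(r)}(X) + o(X^{N+1})$. Dividing the display above by this count yields
$$\on{Avg}_{\substack{f \in U_N(\Z)^{(r)} \\ f \text{ prim.}}} \#\mc{I}(R_f)[2] \;\leq\; \prod_{i = 2}^{N+1} \zeta(i) + o(1),$$
a finite absolute constant depending only on $N$. Substituting this bound into the identity
$$\on{Avg} \#\on{Cl}(R_f)[2] \;=\; 1 + 2^{1 - (N+r)/2} \cdot \on{Avg} \#\mc{I}(R_f)[2]$$
of Theorem~\ref{thm-inc2} then yields the claimed boundedness for the $2$-torsion of $\on{Cl}(R_f)$.

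There is no real obstacle to this argument, as it is a formal consequence of the two cited results together with an elementary density calculation. The reason we recover only a bound, rather than an exact asymptotic average as in Theorems~\ref{thm-main2tors} and~\ref{thm-main2tors2}, is that evaluating the local integrals of Theorem~\ref{thm-main3} restricted to the projective locus of $W_N^0(\Z_p)$ appears intractable for $N > 3$, where the parametrization of projective elements becomes substantially more delicate.
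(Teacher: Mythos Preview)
Your argument is correct and follows precisely the route the paper indicates: the paper states that this theorem follows ``by simply combining Theorem~\ref{thm-main1} with Theorem~\ref{thm-inc2},'' and your proposal supplies exactly the details needed to make that combination rigorous (bounding the sum over primitive forms by the total reducible-orbit count, then normalizing by the primitive-form density $\zeta(N+1)^{-1}$). Your explicit upper bound $\prod_{i=2}^{N+1}\zeta(i)$ for $\on{Avg}\,\#\mc{I}(R_f)[2]$ is a nice byproduct that the paper does not record.
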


\subsection{Historical context and related work}

The problem of studying the arithmetic statistics of non-maximal orders in number fields dates back to the seminal work of Davenport~\cite{MR43822,MR1574296}, who determined the density of discriminants of orders in cubic fields (i.e., he obtained an asymptotic formula for the number of cubic orders having bounded discriminant). This result was generalized in work of Bhargava, Shankar, and Tsimerman~\cite[Theorem~8]{MR3090184}, who determined the density of discriminants of cubic orders satisfying general infinite sets of local specifications. As for orders of higher degree, Bhargava used the parametrizations of quartic/quintic orders that he developed in his thesis~\cite{MR2113024,MR2373152} to determine asymptotic formulas for the number of orders having bounded discriminant in quartic/quintic fields with Galois group equal to the full symmetric group~\cite{MR2183288,MR2745272}.

More recently, progress has been made toward understanding the distribution of \emph{class groups} of orders in number fields. The first result in this direction is due to Bhargava and Varma~\cite{MR3471250}, who determined the average $3$-torsion in the class groups of quadratic orders. Specifically, they showed that when real (resp., complex) quadratic orders are enumerated by discriminant, the average $3$-torsion in the class group is given by $1 + \frac{1}{3} \times \frac{\zeta(2)}{\zeta(3)}$ (resp., $1 + \frac{\zeta(2)}{\zeta(3)}$). This extends an earlier result of Davenport and Heilbronn~\cite{MR491593}, who determined the corresponding average to be $\frac{4}{3}$ (resp., $2$) for the family of maximal quadratic orders (i.e., rings of integers of quadratic number fields).

As explained in \S\ref{sec-remoaner}, Bhargava and Varma proved Theorem~\ref{thm-inc}, which describes the average $2$-torsion in the class groups of cubic orders in terms of the average $2$-torsion in their ideal groups~\cite{MR3369305}, and their result was generalized to orders defined by binary forms of any odd degree $N \geq 3$ by Ho, Shankar, and Varma, who proved Theorem~\ref{thm-inc2}. A similar result was proven for \emph{monogenic} orders --- i.e., orders defined by \emph{monic} binary forms --- of odd degree by Siad in~\cite[Theorem~9]{Siadthesis1}. In the case of monogenic orders of odd degree $N$, the problem of counting $2$-torsion ideals boils down to a problem of counting reducible orbits of the aforementioned representation of the split orthogonal group acting on the space of $N$-ary quadratic forms. As explained in \S\ref{sec-remoaner}, asymptotics akin to Theorems~\ref{thm-main2}--\ref{thm-main1} were proven for reducible orbits of this representation in~\cite{cuspy}; these asymptotics were then applied in~\cite[\S5.6]{swathesis} with $N = 3$ to determine the average $2$-torsion in the ideal groups of monogenic cubic orders. Combining this with the aforementioned result of Siad, we deduced (see~\cite[Theorem~173]{swathesis}) that when monogenic cubic orders are enumerated by height, the average $2$-torsion in the class group is given by $\frac{5}{4} + \frac{1}{4} \times \frac{\zeta(2)}{\zeta(3)}$ (resp., $\frac{3}{2} + \frac{1}{2} \times \frac{\zeta(2)}{\zeta(3)}$). This extends an earlier result of Bhargava, Hanke, and Shankar~\cite{BSHpreprint}, who determined the corresponding average to be $\frac{3}{2}$ (resp., $2$) for the family maximal monogenic cubic orders.

We note that the problem of counting reducible $\on{SL}_N(\Z)$-orbits on $W_N(\Z)$ was first considered by Bhargava, Shankar, and Wang in~\cite{sqfrval2}, where they used geometry-of-numbers methods to determine upper bounds of roughly the correct order of magnitude on the number of reducible orbits having large $Q$-invariant. They applied these upper bounds to prove a squarefree sieve for binary forms. In this paper, their bounds serve as a key \mbox{ingredient in the proofs of our} main theorems. Thus, we indirectly use \emph{upper bounds} for the count of reducible orbits to obtain \emph{precise asymptotics} for this count!

It is natural to ask what can be said for binary forms of even degree. We expect that the methods of~\cite{Siadthesis2,Swpreprint} can be adapted to determine the average $2$-torsion in the class group of orders defined by binary forms of even degree in terms of the average $2$-torsion in the ideal group; however, in the even-degree setting, the parametrization of $2$-torsion ideal classes is significantly more complicated (e.g., it depends on the leading coefficient of the binary forms under consideration, and one must count reducible orbits in multiple families of representations). It is also natural to ask whether analogues of Theorems~\ref{thm-main2}--\ref{thm-main1} can be proven for the action of $\on{SL}_{N}$ on pairs of $N \times N$ symmetric matrices, where $N = 2n \geq 4$ is even. This representation was studied by Bhargava in~\cite{thesource} (cf. the closely related work of Bhargava, Gross, and Wang~\cite{MR3600041}), where he shows that this representation does \emph{not} possess an analogous notion of reducibility and is thus able to determine asymptotics for the count of its integral orbits.

\section{Algebraic preliminaries}

In this section, we introduce the representation of $\on{SL}_N$ on $W_N$ and prove several useful results about the action of various subgroups of $\on{SL}_N$ on various linear subspaces in $W_N$. In particular, we define and study the subgroup $G_N$ and the linear subspace $W_N^0$ referenced in \S\ref{sec-remoaner}. We conclude by proving Theorem~\ref{thm-stronglocglob}, which gives a local-to-global principle for the action of $G_N$ on $W_N^0$.

\subsection{Action of $\on{SL}_N$ on $W_N$} \label{sec-repert}

For integers $m,m' > 0$, let $\on{Mat}_{m \times m'}$ denote the affine scheme over $\Z$ whose $R$-points are
given by the set of $m \times m'$ matrices with entries in $R$ for any $\Z$-algebra $R$. As in \S\ref{sec-remoaner}, let $N = 2n+1 \geq 3$ be an odd integer, and let $W_N$ be the space of pairs of $N \times N$ symmetric matrices (i.e., we have $W_N(R) = R^2 \otimes_R \on{Sym}^2 R^N$ for any $\Z$-algebra $R$). The space $W_N$ has a natural structure of $\on{SL}_N$-representation given as follows: for any $g \in \on{SL}_N(R)$ and any $(A,B) \in W_N(R)$, let
$$g \cdot (A,B) = \big(g A  g^T, g B g^T\big) \in W_N(R),$$
where for a matrix $M$, we denote by $M^T$ its transpose. 

Let $U_N$ be the affine scheme over $\Z$ whose $R$-points are binary $N$-ic forms over $R$ for any $\Z$-algebra $R$. Define a map $\on{inv} \colon W_N \to U_N$ as follows: given a pair $(A,B) \in W_N(R)$, we set
$$\on{inv}(A,B) \defeq (-1)^{n} \det(x A-y B) \in U_N(R).$$
One readily checks that the coefficients of $\on{inv}(A,B)$ are $\on{SL}_N$-invariant; in fact, it is known~\cite{MR516601} that if $f_i \colon W_N \to \mathbb{A}^1$ is the map that takes $(A,B) \in W_N(R)$ and returns the $x^{N-i}y^i$-coefficient of $\on{inv}(A,B)$, then the ring of polynomial invariants for the action of $\on{SL}_N$ on $W_N$ is freely generated by the functions $f_i$ for $i \in \{0, \dots, N\}$. In~\cite[\S4.1]{MR3782066}, an explicit algebraic section $\sigma_0$ defined over $\Z$ was constructed for the map $\on{inv} \colon W_N \to U_N$ (by ``algebraic'' and ``defined over $\Z$,'' we mean that the matrix entries of $\sigma_0$ are polynomials with integer coefficients). Concretely, this section takes $f(x,y) = \sum_{i = 0}^N f_ix^{N-i}y^i \in U_N(R)$ to the pair
\normalsize
$$\sigma_0(f) = \scriptstyle\left(\left[\begin{array}{ccc|cccccc}
& & & & & & 1 \\
& & & & & & \\
& & & & & \rddots & \\
& & & & 1 &  & \\ \hline
& & & f_0 & & &  \\
& & 1 & & f_2 &  &  \\
& \rddots & & & & \ddots &   \\
1 & & & & & & f_{N-1} \end{array}\right], \left[\begin{array}{cccc|ccccc}
& & & & & & 1 & \\
& & & & & & & \\
& & & & & \rddots & & \\
& & & & 1 & & & \\ \hline
& & & 1 & -f_1 & & & \\
& & \rddots & & & \ddots & & \\
1 & & & & & & -f_{N-2} & \\ & & & & & & & -f_N \end{array}\right]\right)$$
\normalsize
where empty entries are used to denote zeros, and where we have inserted horizontal and vertical lines immediately after row and column $n$ in both matrices.

Let $K$ be a field. As mentioned in \S\ref{sec-remoaner}, a pair $(A,B) \in W_N(K)$ is said to be \emph{reducible} over $K$ if the symmetric bilinear forms defined by $A$ and $B$ share a maximal (i.e., $n$-dimensional) isotropic space, and \emph{irreducible} otherwise. The condition of being (ir)reducible is evidently $\on{SL}_N(K)$-invariant, so we may speak of the (ir)reducible $\on{SL}_N(K)$-orbits on $W_N(K)$. If $R$ is an integral domain with field of fractions $K$, we say that an $\on{SL}_N(R)$-orbit on $W_N(R)$ is (ir)reducible if the corresponding property holds for the $\on{SL}_N(K)$-orbit containing it.

In the case $N = 3$, we will have occasion to combine the action of $\on{SL}_3$ on $W_3$ with the action of $\on{GL}_2$ on $U_3$. Recall that $\on{GL}_2$ acts on $U_3$ via linear change-of-variable; i.e., given $\gamma \in \on{GL}_2(R)$ and $f \in U_3(R)$, we have $(\gamma \cdot f)(x,y) = f((x,y) \cdot \gamma)$. The ring of polynomial invariants for the action of $\on{GL}_2$ on $U_3$ is freely generated by a single element known as the \emph{discriminant}. We have an action of $\on{GL}_2 \times \on{SL}_3$ on $W_3$ defined as follows: given $\gamma = \left[\begin{smallmatrix} a & b \\ c & d \end{smallmatrix}\right] \in \on{GL}_2(R)$, $g \in \on{SL}_3(R)$, and $(A,B) \in W_N(R)$, we set
$$(\gamma,g) \cdot (A,B) = \big(g(aA-bB)g^T,g(cA-dB)g^T
\big).$$
The map $\on{inv} \colon W_3 \to U_3$ is equivariant for the $\on{GL}_2$-action, and the ring of polynomial invariants for the action of $\on{GL}_2 \times \on{SL}_3$ on $W_3$ is generated by the function obtained by postcomposing the map $\on{inv}$ with the discriminant map $U_3 \to \mathbb{A}^1$.

\subsection{Action of subgroups of $\on{SL}_N$ on linear subspaces in $W_N$}

For a matrix $M$, denote its row-$i$, column-$j$ entry by $M_{ij}$. We now introduce three linear subspaces in $W_N$ and study the actions of certain subgroups of $\on{SL}_N$ on them.

\subsubsection{Action of $\on{SL}_n \times \on{SL}_{n+1}$ on $W_N^{\on{top}}$} \label{sec-221}

Let $W_N^{\on{top}} \subset W_N$ denote the linear subspace whose $R$-points are given by
$$W_N^{\on{top}}(R) \defeq \big\{(A,B) \in W_N(R) : A_{ij} = B_{ij} = 0 \text{ if $i,j \leq n$ or $i,j \geq n+1$}\big\}$$
for any $\Z$-algebra $R$. When it is convenient, we think of elements of $W_N^{\on{top}}$ as being pairs of $n \times (n+1)$ matrices, by considering just the top-right $n \times (n+1)$ blocks. 

Let $\on{SL}_n \times \on{SL}_{n+1} \subset \on{SL}_N$ be the subgroup consisting of block-diagonal matrices, where the diagonal consists of one $n \times n$ block followed by one $(n+1) \times (n+1)$ block, both having determinant $1$. The action of $\on{SL}_N$ on $W_N$ restricts to an action of $\on{SL}_n \times \on{SL}_{n+1}$ on $W_N^{\on{top}}$.

We now describe the action of $\on{SL}_n \times \on{SL}_{n+1}$ on $W_N^{\on{top}}$ over a field $K$:
\begin{prop} \label{prop-castle}
  The ring of polynomial invariants of the action of $\on{SL}_n \times \on{SL}_{n+1}$ on $W_N^{\on{top}}$ is generated by a single polynomial $\lambda$ such that for any field $K$ and any $\lambda_0 \in K^\times$, the set $\lambda^{-1}(\lambda_0) \subset W_N^{\on{top}}(K)$ is nonempty and consists of a single $(\on{SL}_n \times \on{SL}_{n+1})(K)$-orbit. Moreover, the stabilizer of any element of this orbit is trivial.
\end{prop}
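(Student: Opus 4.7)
The plan is to construct $\lambda$ explicitly using the Pl\"ucker data of the pencil $xA_1 - yB_1$. Viewing an element of $W_N^{\on{top}}$ as a pair $(A_1, B_1) \in \Mat_{n \times (n+1)}(R)^2$ via the nonzero top-right block, form the pencil $M(x,y) := xA_1 - yB_1$ with entries in $R[x, y]$; its $n+1$ maximal minors $P_0, \ldots, P_n$ (taken with Pl\"ucker signs) are binary forms of degree $n$ in $x, y$. Writing $P_k = \sum_j c_{kj}\, x^{n-j} y^j$ and collecting the coefficients into $C := (c_{kj})_{0 \leq k, j \leq n}$, I take $\lambda := \det C$. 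Under $g_1 \in \SL_n$, each minor $P_k$ is multiplied by $\det g_1 = 1$, leaving $C$ unchanged; under $g_2 \in \SL_{n+1}$, the Pl\"ucker vector $(P_0, \ldots, P_n)$ is transformed by the $\wedge^n \cong (\mathrm{std})^\vee$ representation, whose image lies in $\SL_{n+1}$, so $C$ is multiplied on the left by a matrix of determinant one and $\det C = \lambda$ is preserved. Hence $\lambda$ is a polynomial $(\SL_n \times \SL_{n+1})$-invariant of degree $n(n+1)$ in the entries of $(A_1, B_1)$.

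A dimension count, $\dim W_N^{\on{top}} - \dim(\SL_n \times \SL_{n+1}) = 2n(n+1) - (2n^2+2n-1) = 1$, together with the fact that $\lambda$ is nonconstant, shows that the generic orbit has codimension exactly one, so the invariant ring has Krull dimension one. Since $\SL_n \times \SL_{n+1}$ is semisimple without nontrivial rational characters, its invariant ring on $W_N^{\on{top}}$ is a graded normal UFD (by Hochster--Roberts plus Popov), hence a polynomial ring in one generator $f$; a bi-weight analysis of $\lambda$ under the scaling $(A_1, B_1) \mapsto (sA_1, tB_1)$, which assigns $\lambda$ the bi-weight $\bigl(\tfrac{n(n+1)}{2}, \tfrac{n(n+1)}{2}\bigr)$, together with the absence of $\SL_n \times \SL_{n+1}$-invariants of smaller bi-weight, then identifies $\lambda$ (up to a scalar) with $f$. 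For existence, I take for any $\lambda_0 \in K^\times$ the explicit
\[
(A_1^*, B_1^*) := \big((I_n \mid 0),\ (0 \mid \diag(\lambda_0, 1, \ldots, 1))\big) \in W_N^{\on{top}}(K);
\]
the pencil $xA_1^* - yB_1^*$ is upper bidiagonal, its maximal minors are monomials in $x, y$, and an anti-diagonal determinant computation yields $\lambda(A_1^*, B_1^*) = \lambda_0$.

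For triviality of the stabilizer at $(A_1^*, B_1^*)$, any $(g_1, g_2) \in (\SL_n \times \SL_{n+1})(K)$ fixing $A_1^* = (I_n \mid 0)$ is forced by a block computation to have $g_2^T = \bigl(\begin{smallmatrix} g_1^{-1} & 0 \\ r^T & 1 \end{smallmatrix}\bigr)$ for some $r \in K^n$; setting $G := g_1^{-1}$, the equation $g_1 B_1^* g_2^T = B_1^*$ rewrites as $G B_1^* = B_1^* g_2^T$, which I unravel column by column. The first and last columns place zeros in the first column of $G$ below the diagonal and in the last column above the diagonal; the second column, using $\lambda_0 \neq 0$ crucially, gives $G_{2,2} = G_{1,1}$ and kills the second column of $G$ below the diagonal; and the recurrences coming from columns $3, \ldots, n$ propagate these zeros along sub- and super-diagonals to force $G = I_n$ and $r = 0$. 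To conclude $\lambda^{-1}(\lambda_0)$ is a single orbit, I work first over $\C$: Matsushima's criterion (the trivial stabilizer is reductive) makes $\mc{O}^* := (\SL_n \times \SL_{n+1}) \cdot (A_1^*, B_1^*)$ Zariski-closed in $W_N^{\on{top}}$, of dimension $2n^2 + 2n - 1 = \dim \lambda^{-1}(\lambda_0)$; since the invariant ring is $K[\lambda]$, the GIT quotient $W_N^{\on{top}} \to \A^1$ has each fiber containing a unique closed orbit, forcing $\lambda^{-1}(\lambda_0) = \mc{O}^*$ (any other orbit would have $\mc{O}^*$ in its closure, contradicting the dimension equality). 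Descent to general $K \subset \C$ is immediate by Galois invariance of the unique element of $(\SL_n \times \SL_{n+1})(\C)$ sending $(A_1^*, B_1^*)$ to a given $(A_1, B_1) \in \lambda^{-1}(\lambda_0)(K)$. The main technical obstacle is the column-by-column stabilizer computation; the other steps are structural once $\lambda$ is in hand.
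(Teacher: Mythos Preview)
Your approach is correct in outline but genuinely different from the paper's. The paper argues by induction on $n$ via Sato--Kimura castling transforms: the representation for $n=n_0$ is a castling of the one for $n=n_0-1$, and prehomogeneity together with the (trivial) generic stabilizer and the relative invariant are all preserved under castling; the base case $n=1$ is then just $\SL_2$ acting by right-multiplication on $\Mat_{2\times 2}$ with invariant the determinant. Your route---construct the hyperdeterminant directly, compute a stabilizer by hand, and then use Matsushima's criterion plus the uniqueness of the closed orbit in a GIT fiber---is more self-contained and avoids the castling formalism, at the price of a longer linear-algebra computation and heavier invariant-theoretic input (Popov, Matsushima). Either method yields the same explicit $\lambda$ and the same explicit section.

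There is, however, one genuine gap. Your single-orbit argument uses that the GIT quotient is $\Spec K[\lambda]$, i.e., that $\lambda$ itself generates the invariant ring rather than being a proper power $f^m$ of the true generator; otherwise the fiber $\lambda^{-1}(\lambda_0)$ would split as a union of $m$ distinct $f$-fibers, each containing its own closed orbit, and the dimension contradiction would fail. Your only justification for $m=1$ is the line ``together with the absence of $\SL_n\times\SL_{n+1}$-invariants of smaller bi-weight,'' which is asserted but not proven, and the bi-weight $\big(\tfrac{n(n+1)}{2},\tfrac{n(n+1)}{2}\big)$ alone does not preclude $m>1$. Two clean fixes: either cite the known irreducibility of the boundary-format hyperdeterminant, or observe that your explicit one-parameter family $(A_1^*(\lambda_0),B_1^*(\lambda_0))$ gives a polynomial map $\lambda_0\mapsto f(A_1^*,B_1^*)$ which is injective on $\C$ (distinct $\lambda_0$'s give distinct $\lambda$-values, hence distinct $f$-values), forcing this map to be linear and thus $m=1$.

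A smaller point: the phrase ``Galois invariance'' in your descent step is imprecise, since $\C/K$ need not be Galois. The intended argument is that the orbit map $G_K\to\lambda^{-1}(\lambda_0)_K$ is a $K$-morphism which becomes an isomorphism after base change to $\C$, hence is already an isomorphism over $K$ by faithfully flat descent; this is also what the paper's cited lemma encodes.
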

\begin{proof}
  The proposition amounts to stating that the representation of $(\on{SL}_n \times \on{SL}_{n+1})(K)$ on $W_N^{\on{top}}(K)$ is ``prehomogeneous,'' and the proof is identical to that of~\cite[Proposition~3.1]{sqfrval2}. The main observation is that, for any integer $n_0 \geq 2$, the representation of $(\on{SL}_n \times \on{SL}_{n+1})(K)$ on $W_N^{\on{top}}(K)$ in the case $n = n_0$ is related to the representation of $(\on{SL}_n \times \on{SL}_{n+1})(K)$ on $W_N^{\on{top}}(K)$ in the case $n = n_0 - 1$ by what Sato and Kimura call a ``castling transform'' (see~\cite[\S2]{MR430336}). Since the property of being prehomogeneous is preserved under castling transforms, it suffices to prove the lemma in the case $n = 1$, where the action of $(\on{SL}_n \times \on{SL}_{n+1})(K)$ on $W_N^{\on{top}}(K)$ may be identified with the action of $\on{SL}_2(K)$ on $\on{Mat}_{2 \times 2}(K)$ by right-multiplication. The desired polynomial invariant function $\lambda$ for this action is given simply by the determinant.

  We now describe the invariant $Q$ for $n > 1$. Given a $\Z$-algebra $R$ and a pair $(A,B) \in W_N^{\on{top}}(R)$, we take $\lambda(A,B)$ to be the ``hyperdeterminant'' of $(A,B)$, which is defined explicitly as follows. For each $i \in \{1, \dots, n+1\}$, let $A^{(i)}$ and $B^{(i)}$ be the $n \times n$ matrices obtained from $A$ and $B$, respectively, by considering the top-right $n \times (n+1)$ block and deleting the $i^{\mathrm{th}}$ column. Then $\lambda(A,B)$ is the determinant of the $(n+1) \times (n+1)$ matrix whose row-$i$, column-$j$ entry is the $x^{n-j+1}y^{j-1}$-entry of the binary $n$-ic form $(-1)^{i+1}\det(xA^{(i)} - yB^{(i)})$. 
  
  That $Q^{-1}(Q_0)$ is nonempty for any $Q_0 \in K^\times$ follows by examining the following pair of matrices in $W_N^{\on{top}}(K)$:
  \begin{equation} \label{eq-qsech}
  \left(\left[\begin{array}{ccccc} & & & & 1 \\
  & & & 1 & \\ 
  & & \rddots & & \\
  & 1 & & &
  \end{array}\right],\left[\begin{array}{ccccc} & & & \pm Q_0 & \\
  & & 1 & & \\ 
  & \rddots  & & & \\
  1 & & & &
  \end{array}\right]\right)
  \end{equation}
  The pair in~\eqref{eq-qsech} evidently has $Q$-invariant $Q_0$ or $-Q_0$, depending on the choice of sign of the entry ``$\pm Q_0$.'' In fact, this pair gives an explicit section of the map $Q \colon W_N^{\on{top}} \to \mathbb{A}^1$, defined over $\Z$.  As for the claim about stabilizers, the action of $\on{SL}_2(K)$ by right-multiplication on matrices of nonzero determinant in $\on{Mat}_{2 \times 2}(K)$ obviously has trivial stabilizers, and stabilizers are preserved under castling transforms. 
\end{proof}

\subsubsection{Action of $L_N$ on $W_N^{\on{top},0}$}
\label{sec-lowering}

Let $W_N^{\on{top},0} \subset W_N^{\on{top}}$ be the linear subspace whose $R$-points \mbox{are given by}
$$W_N^{\on{top},0}(R) \defeq \big\{(A,B) \in W_N^{\on{top}}(R) : A_{ij} = 0\text{ if $i + j \leq 2n+1$ and }B_{ij} = 0\text{ if $i + j \leq 2n$}\big\}$$
for any $\Z$-algebra $R$. Let $L_N \subset \on{SL}_n \times \on{SL}_{n+1}$ be the lower-triangular subgroup. Then the action of $\on{SL}_n \times \on{SL}_{n+1}$ on $W_N^{\on{top}}$ restricts to an action of $L_N$ on $W_N^{\on{top},0}$. Given $(A,B) \in W_N^{\on{top},0}(R)$, we have the following formula for the $Q$-invariant up to sign:
\begin{equation} \label{eq-ogq} 
Q(A,B) = \pm\prod_{i = 1}^n A_{i,N+1-i}^{n+1-i} \prod_{i=1}^n B_{i,N-i}^i.
\end{equation}
We now describe this restricted action of $L_N(K)$ on $W_N^{\on{top},0}(K)$ for $K$ a field:

\begin{prop} \label{prop-qalltheway}
Let $K$ be a field, and let $Q_0 \in K^\times$. Then the set $Q^{-1}(Q_0) \cap W_N^{\on{top},0}(K)$ is nonempty and consists of a single $L_N(K)$-orbit. Moreover, the stabilizer of any element of this orbit is trivial.
\end{prop}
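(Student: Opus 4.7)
The plan is to leverage Proposition~\ref{prop-castle} and analyze the action of $L_N$ on the anti-triangular subspace $W_N^{\on{top},0}$ directly. Nonemptiness is immediate: the pair $w_0 = (A_0, B_0)$ displayed in~\eqref{eq-qsech} has support on the anti-diagonals $i + j = 2n+2$ in $A_0$ and $i + j = 2n+1$ in $B_0$, so $w_0 \in W_N^{\on{top},0}(K)$, with $Q(w_0) = \pm Q_0$ as established in the proof of Proposition~\ref{prop-castle}. That $L_N$ preserves $W_N^{\on{top},0}$ follows from a short index calculation: for lower-triangular $g_1 \in \on{SL}_n$ and $g_2 \in \on{SL}_{n+1}$ and any $(A, B) \in W_N^{\on{top},0}$, each nonzero summand in $(g_1 \tilde{A} g_2^T)_{ik} = \sum_{j,l} (g_1)_{ij}\,\tilde{A}_{jl}\,(g_2)_{kl}$ requires $j \leq i$, $l \leq k$, and $j + l \geq n + 2$, forcing $i + k \geq n + 2$; the analogous check applies to $\tilde{B}$ with threshold $n + 1$. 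The trivial-stabilizer statement then follows a fortiori from Proposition~\ref{prop-castle}, since $L_N(K) \subseteq (\on{SL}_n \times \on{SL}_{n+1})(K)$.

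The content is transitivity. By Proposition~\ref{prop-castle}, every $(A, B) \in Q^{-1}(Q_0) \cap W_N^{\on{top},0}(K)$ equals $g \cdot w_0$ for a unique $g \in (\on{SL}_n \times \on{SL}_{n+1})(K)$, and it suffices to show $g \in L_N(K)$. I would do this by a two-step normalization inside $L_N$. First, I would use the diagonal torus $T \subset L_N$ --- a split torus of dimension $2n - 1$ --- to normalize the $2n$ boundary entries $\tilde{A}_{i,\,n+2-i}$ and $\tilde{B}_{i,\,n+1-i}$ for $i = 1,\ldots,n$ to match those of $w_0$. On $W_N^{\on{top},0}$, the invariant $Q$ reduces to a nonzero monomial in boundary entries alone, as one sees by expanding the determinant defining $Q$ in the proof of Proposition~\ref{prop-castle}: the off-boundary entries contribute only to the strictly upper-triangular part of the relevant $(n+1) \times (n+1)$ matrix $M$, so $\det M$ equals the product of its diagonal entries, each a monomial in boundary values. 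Consequently, the $T$-action on boundary configurations with prescribed $Q$-value is free and transitive over any $K \subseteq \C$: split tori have trivial Galois cohomology, and $w_0$ supplies a $K$-rational point of the corresponding $T$-torsor. Second, I would use the strictly lower-triangular subgroup $U^- \subset L_N$ (of dimension $n^2$, matching the number of off-boundary free entries) to clear the remaining off-boundary entries. The same index calculation, specialized to unitriangular $g_1, g_2$, shows that $U^-$ fixes boundary entries, and the induced action of $U^-$ on off-boundary entries is triangular with respect to an ordering by distance from the boundary anti-diagonal, with leading coefficients equal to products of (nonzero) boundary entries; hence $U^-(K)$ acts freely and transitively on off-boundary configurations compatible with a given boundary.

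The main obstacle is the combinatorial verification of the two claims underlying the normalization: that $Q$ on $W_N^{\on{top},0}$ reduces to a monomial in boundary entries, and that the $U^-$-action on off-boundary entries is triangular in a suitable ordering. Both should follow from careful bookkeeping, the key point being that the strictly anti-triangular support of $(\tilde{A}, \tilde{B})$ in $W_N^{\on{top},0}$ forces most terms in the relevant determinants and matrix products to vanish. If the direct approach proves too combinatorially involved, a cleaner alternative proceeds entirely over $K = \C$: a dimension count ($\dim L_N = (n+1)^2 - 2 = \dim(Q^{-1}(Q_0) \cap W_N^{\on{top},0})$) together with an irreducibility argument mirroring the castling-transform reduction in the proof of Proposition~\ref{prop-castle} yields transitivity over $\C$, and the descent to arbitrary $K \subseteq \C$ then follows by~\cite[Lemma~23]{cuspy}.
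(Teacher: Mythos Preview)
Your proposal is correct and follows essentially the same two-step strategy as the paper's proof: first use the diagonal torus in $L_N(K)$ to normalize the anti-diagonal ``boundary'' entries (the paper states this directly, while you frame it as a torsor argument), then use lower-triangular unipotent elements to successively clear the off-boundary entries (the paper makes this explicit via the ordered list~\eqref{eq-listcoffs1}, while you phrase it as a triangular action of $U^-$). Your deduction of trivial stabilizers from Proposition~\ref{prop-castle} is cleaner than the paper's bare ``obvious,'' and your alternative dimension-count-plus-descent route would also work but is not what the paper does.
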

\begin{proof}
The nonemptiness statement follows immediately from the existence of the explicit section~\eqref{eq-qsech}, which has image contained in $W_N^{\on{top},0}$. The statement about stabilizers is obvious. 

As for the transitivity statement, take $(A,B) \in Q^{-1}(Q_0) \cap W_N^{\on{top},0}(K)$. It suffices to show that $(A,B)$ is $L_N(K)$-equivalent to~\eqref{eq-qsech}. First, by replacing $(A,B)$ with a suitable translate under the action of a diagonal element in $L_N(K)$, we may assume that $A_{ij} = 1$ for all $i + j = N+1$ and that $B_{ij} = 1$ for all $i + j = N$, except when $(i,j) = (1, N-1)$, in which case $B_{1(N-1)} = \pm Q_0$. Now, call a lower-triangular unipotent element $g \in \on{SL}_N(K)$ \emph{elementary} if $g_{ij} = 0$ for all but one pair $(i,j)$ with $i > j$. It is easy to verify by inspection that, by hitting $(A, B)$ with a suitable sequence of elementary unipotent elements of $L_N(K)$, we can successively clear out the values of the following matrix entries:
\begin{align} \label{eq-listcoffs1}
& B_{1(2n+1)},\, A_{2(2n+1)},\, B_{2(2n)},\, B_{2(2n+1)},\, \ldots,\, \\
&\qquad A_{k(2n+3-k)},\, \dots,\, A_{k(2n+1)},\, B_{k(2n+2-k)},\, \dots,\, B_{k(2n+1)},  \dots,\, \nonumber \\
&\qquad\qquad A_{n(n+3)},\, \dots,\, A_{n(2n+1)},\, B_{n(n+2)},\, \dots,\, B_{n(2n+1)}. \nonumber
\end{align}
We may thus assume that the matrix entries of $(A,B)$ that are listed in~\eqref{eq-listcoffs1} are all equal to $0$. 
But then $(A,B)$ is equal to~\eqref{eq-qsech}, which is sufficient.
\end{proof}

\begin{corollary} \label{cor-lng}
If two elements of $Q^{-1}(Q_0) \cap W_N^{\on{top},0}(K)$ are equivalent under the action of $g \in (\on{SL}_n \times \on{SL}_{n+1})(K)$, then we have $g \in L_N(K)$.
\end{corollary}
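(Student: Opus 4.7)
The plan is to deduce this corollary as an immediate consequence of Propositions~\ref{prop-castle} and~\ref{prop-qalltheway}: specifically, the uniqueness-of-orbit statement in Proposition~\ref{prop-qalltheway} provides an $L_N(K)$-element realizing the relation between the two given points, and the triviality-of-stabilizer statement in Proposition~\ref{prop-castle} forces the original element $g$ to coincide with it.

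In detail, suppose $(A_1,B_1), (A_2,B_2) \in Q^{-1}(Q_0) \cap W_N^{\on{top},0}(K)$ and $g \in (\on{SL}_n \times \on{SL}_{n+1})(K)$ satisfy $g \cdot (A_1,B_1) = (A_2,B_2)$. By Proposition~\ref{prop-qalltheway}, the set $Q^{-1}(Q_0) \cap W_N^{\on{top},0}(K)$ consists of a single $L_N(K)$-orbit, so there exists $\ell \in L_N(K)$ with $\ell \cdot (A_1,B_1) = (A_2,B_2)$. Since $L_N$ is a subgroup of $\on{SL}_n \times \on{SL}_{n+1}$, we may view $\ell$ as an element of $(\on{SL}_n \times \on{SL}_{n+1})(K)$, and then $g\ell^{-1}$ lies in the stabilizer of $(A_2,B_2)$ in $(\on{SL}_n \times \on{SL}_{n+1})(K)$. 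By Proposition~\ref{prop-castle}, this stabilizer is trivial, so $g\ell^{-1} = \on{id}$ and hence $g = \ell \in L_N(K)$.

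There is no real obstacle here: the work has already been done in establishing the two propositions, and the corollary is essentially a formal consequence of the fact that when a larger group $H$ and a subgroup $H' \subseteq H$ both act transitively on the same set with $H$ having trivial point stabilizers, every element of $H$ that moves one point of the set to another must actually lie in $H'$. The only thing to double-check is that the inclusion $L_N \subset \on{SL}_n \times \on{SL}_{n+1}$ allows us to compare the two transitive actions on the same set $Q^{-1}(Q_0) \cap W_N^{\on{top},0}(K)$, which is clear since both actions are restrictions of the ambient $\on{SL}_N$-action on $W_N$.
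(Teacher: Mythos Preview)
Your proof is correct and essentially identical to the paper's own argument: both use Proposition~\ref{prop-qalltheway} to produce an element of $L_N(K)$ realizing the same equivalence, then invoke the trivial-stabilizer statement of Proposition~\ref{prop-castle} to conclude that $g$ equals that element. The only cosmetic difference is that the paper considers $g_2^{-1}g_1$ stabilizing $(A_1,B_1)$ rather than $g\ell^{-1}$ stabilizing $(A_2,B_2)$, which is immaterial.
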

\begin{proof}
Suppose for some $g_1 \in (\on{SL}_n \times \on{SL}_{n+1})(K)$ and elements $(A_1, B_1), (A_2, B_2) \in Q^{-1}(Q_0) \cap W_N^{\on{top},0}(K)$ we have $g_1 \cdot (A_1, B_1) = (A_2, B_2)$. By Proposition~\ref{prop-qalltheway}, there exists $g_2 \in L_N(K)$ such that $g_2 \cdot (A_1, B_1) = (A_2, B_2)$, so $g_2^{-1}g_1$ stabilizes $(A_1, B_1)$, but by Proposition~\ref{prop-castle}, the stabilizer of $(A_1, B_1)$ in $(\on{SL}_n \times \on{SL}_{n+1})(K)$ is trivial, so $g_1 = g_2$, as necessary.
\end{proof}

\subsubsection{Fundamental domain for $(\on{SL}_n \times \on{SL}_{n+1})(\Z_p) \curvearrowright W_N^{\on{top}}(\Z_p)$} \label{sec-fundzits}

Using the results of \S\S\ref{sec-221}--\ref{sec-lowering}, we now construct a fundamental domain for the action of $(\on{SL}_n \times \on{SL}_{n+1})(\Z_p)$ on $W_N^{\on{top}}(\Z_p)$; this fundamental domain plays a crucial role in the proof of Theorem~\ref{thm-main1} (see \S\ref{sec-pfthmn1}).\footnote{Note that by a ``fundamental domain'' for the action of a group on a set, we mean a subset that contains exactly one element of each orbit.}

We start by choosing a convenient partition of $W_N^{\on{top}}(\Z_p)$ into subsets indexed by pairs of nonnegative-integer-vectors of length $n$. Given $\vec{a} = (a_1, \dots, a_n),\,\vec{b} = (b_1, \dots, b_n) \in \mathbb{N}^n$, define a subset $\mc{L}_{\vec{a},\vec{b}}(p)$ as follows:
\begin{align*}
\mc{L}_{\vec{a},\vec{b}}(p) & \defeq (\on{SL}_n \times \on{SL}_{n+1})(\Z_p) \times \\
& \qquad\qquad \left\{(A,B) \in W_N^{\on{top},0}(\Z_p) : \begin{array}{c} \nu_p(A_{ij}) = a_{i} \text{ for $(i,j)$ with $i + j = N+1$,} \\ \nu_p(B_{ij}) = b_i \text{ for $(i,j)$ with $i + j = N$}\end{array} 
\right\}
\end{align*}
Take $(A,B) \in \mc{L}_{\vec{a},\vec{b}}(p)$. By the proof of Proposition~\ref{prop-qalltheway}, we can use the action of $(\on{SL}_n \times \on{SL}_{n+1})(\Q_p)$ to make $A_{ij} = 0$ for each $(i,j)$ with $i + j > N+1$ and $B_{ij} = 0$ for each $(i,j)$ with $i+j > N$, without changing $A_{ij}$ for each $(i,j)$ with $i + j = N+1$ and $B_{ij}$ for each $(i,j)$ with $i+j = N$. Then one readily computes using~\eqref{eq-ogq} that
\begin{equation} \label{eq-formforq}
|Q(A,B)|_p^{-1} = \prod_{i = 1}^n p^{(n+1-i)a_i + ib_i}.
\end{equation}
The following result gives the desired fundamental domain in terms of the sets $\mc{L}_{\vec{a},\vec{b}}(p)$:
\begin{prop} \label{prop-padicfund}
For each $\vec{a},\vec{b} \in \mathbb{N}^n$, a fundamental domain for the action of $(\on{SL}_n \times \on{SL}_{n+1})(\Z_p)$ on the set $\mc{L}_{\vec{a},\vec{b}}(p)$ is given by
\small
\begin{equation} \label{eq-funder}
\left\{(A,B) \in W_N^{\on{top},0}(\Z_p) : \begin{array}{c} A_{ij} = p^{a_i} \text{ for $(i,j)$ with $i+j = N+1$,} \\ A_{ij} \in \{0,\dots,p^{ a_{N+1-j}}-1\} \text{ for $(i,j)$ with $i + j > N+1$ and $i \leq n$,} \\B_{ij} = p^{b_i} \text{ for $(i,j)$ with $i+j = N$ and $i > 1$,} \\ \nu_p(B_{1(N-1)}) = b_1, \\ B_{ij} \in \{0, \dots, p^{b_i}-1\} \text{ for $(i,j)$ with $i + j > N$ and $i \leq n$}\end{array}\right\}
\end{equation}
\normalsize
A fundamental domain for the action of $(\on{SL}_n \times \on{SL}_{n+1})(\Z_p)$ on the set $\{(A,B) \in W_N^{\on{top}}(\Z_p) : Q(A,B) \neq 0\}$ is given by the $($disjoint$)$ union of the set~\eqref{eq-funder} over all $\vec{a},\vec{b} \in \mathbb{N}^n$.
\end{prop}
\begin{proof}
First note that we have
\begin{equation} \label{eq-rhs}
\mc{L}_{\vec{a}, \vec{b}}(p) \cap W_N^{\on{top},0}(\Z_p) = \left\{(A,B) \in W_N^{\on{top},0}(\Z_p) : \begin{array}{c} \nu_p(A_{ij}) = a_{i} \text{ for $(i,j)$ with $i + j = N+1$,} \\\nu_p(B_{ij}) = b_i \text{ for $(i,j)$ with $i + j = N$}\end{array} 
\right\}.
\end{equation}
Indeed, the left-hand side of~\eqref{eq-rhs} obviously contains the right-hand side. As for the reverse containment, if $(A,B) \in \mc{L}_{\vec{a}, \vec{b}}(p) \cap W_N^{\on{top},0}(\Z_p)$, then there exist an element $g \in (\on{SL}_n \times \on{SL}_{n+1})(\Z_p)$ and a pair $(A',B')$ belonging to the right-hand side of~\eqref{eq-rhs} such that $(A,B) = g \cdot (A',B')$. By Corollary~\ref{cor-lng}, we must have $g \in L_N(\Z_p)$, so since the action of $L_N(\Z_p)$ preserves the right-hand side of~\eqref{eq-rhs}, the pair $(A,B)$ must belong to it as well. 

 The next step is to show that the action of $(\on{SL}_n \times \on{SL}_{n+1})(\Z_p)$ can be used to move points in $W_N^{\on{top}}(\Z_p)$ into the linear subspace $W_N^{\on{top},0}(\Z_p)$:

\begin{lemma} \label{lem-meettri}
Let $Q_0 \in \Z_p \smallsetminus \{0\}$. Then every $(\on{SL}_n \times \on{SL}_{n+1})(\Z_p)$-orbit on $Q^{-1}(Q_0) \subset W_N^{\on{top}}(\Z_p)$ meets $W_N^{\on{top},0}(\Z_p)$.
\end{lemma}
\begin{proof}[Proof of Lemma~\ref{lem-meettri}]
The proof is similar to that of~\cite[Lemma~206]{swathesis}. Take $(A,B) \in Q^{-1}(Q_0) \cap W_N^{\on{top}}(\Z_p)$. Then there exists $g \in (\on{SL}_n \times \on{SL}_{n+1})(\Q_p)$ such that $g \cdot (A,B) \in W_N^{\on{top},0}(\Z_p)$, because the section~\eqref{eq-qsech} of $Q$ is defined over $\Z_p$ with image contained in $W_N^{\on{top},0}$, and because the elements of the set $Q^{-1}(Q_0) \cap W_N^{\on{top}}(\Z_p)$ belong to the same $(\on{SL}_n \times \on{SL}_{n+1})(\Q_p)$-orbit by Proposition~\ref{prop-castle}. 

By the $p$-adic Iwasawa decomposition, we have that 
$$(\on{SL}_n \times \on{SL}_{n+1})(\Q_p) = L_N(\Q_p)(\on{SL}_n \times \on{SL}_{n+1})(\Z_p),$$
so there exists $g_1 \in L_N(\Q_p)$ and $g_2 \in (\on{SL}_n \times \on{SL}_{n+1})(\Z_p)$ such that $g = g_1g_2$. But $L_N$ acts on $W_N^{\on{top},0}$ (see \S\ref{sec-lowering} for further details), so $g_2 \cdot (A,B) \in W_N^{\on{top},0}(\Q_p)$. Since $g_2 \in (\on{SL}_n \times \on{SL}_{n+1})(\Z_p)$ and $(A,B) \in W_N^{\on{top}}(\Z_p)$, we must in fact have that $g_2 \cdot (A,B) \in W_N^{\on{top},0}(\Z_p)$.
\end{proof}
Now, by Lemma~\ref{lem-meettri} and Corollary~\ref{cor-lng}, it suffices to show that~\eqref{eq-funder} is a fundamental domain for the action of $L_N(\Z_p)$ on $\mc{L}_{\vec{a},\vec{b}}(p) \cap W_N^{\on{top},0}(\Z_p)$, but this follows by adapting the proof of Proposition~\ref{prop-qalltheway} to work over $\Z_p$. Take $(A,B) \in \mc{L}_{\vec{a},\vec{b}}(p)\cap W_N^{\on{top},0}(\Z_p)$. First, instead of using diagonal transformations to make $A_{ij} = 1$ for $(i,j)$ with $i+j = N+1$ and $B_{ij} = 1$ for $(i,j)$ with $i+j = N$, except when $(i,j) = (1,N-1)$, we use these transformations to make $A_{ij} = p^{a_i}$ for $(i,j)$ with $i+j = N+1$ and $B_{ij} = p^{b_i}$ for $(i,j)$ with $i+j = N$, except when $(i,j) = (1,N-1)$. Then, instead of using elementary unipotent transformations to make the remaining matrix entries zero, we use these transformations to reduce these matrix entries modulo $p^{a_1}, \dots, p^{a_n}, p^{b_1}, \dots, p^{b_n}$. This establishes that each orbit for the action of $(\on{SL}_n \times \on{SL}_{n+1})(\Z_p)$ on $\mc{L}_{\vec{a},\vec{b}}(p)$ meets \mbox{the set~\eqref{eq-funder} at least once. }

On the other hand, if $(A,B), (A',B')$ belong to the set~\eqref{eq-funder} and there exists an element $g \in (\on{SL}_n \times \on{SL}_{n+1})(\Z_p)$ such that $(A,B) = g \cdot (A',B')$, then by Corollary~\ref{cor-lng}, we must have $g \in L_N(\Z_p)$. Write $g = g_1g_2$, where $g_1$ is unipotent and $g_2$ is diagonal. Denote the diagonal entries of $g_2$ by $(g_2)_{ii}$. Observe that we have the equalities 
\begin{equation} \label{eq-aacomp}
(g_2)_{ii}(g_2)_{jj}A_{ij}' = (g\cdot A')_{ij} = A_{ij} = p^{a_i} = A_{ij}'
\end{equation}
for $(i,j)$ with $i+j = N+1$ and $i \leq n$, and that we have
\begin{equation} \label{eq-bbcomp}
(g_2)_{ii}(g_2)_{jj}B_{ij}' = (g \cdot B')_{ij} =  B_{ij} = p^{b_i} = B_{ij}'
\end{equation}
for $(i,j)$ with $i + j = N$ and $1 < i \leq n$. Combining~\eqref{eq-aacomp} and~\eqref{eq-bbcomp} with the fact that $Q(A,B) = Q(A',B')$ along with the formula~\eqref{eq-ogq}, we see that~\eqref{eq-bbcomp} holds when $i = 1$ too. Then combining~\eqref{eq-aacomp} and~\eqref{eq-bbcomp} with the condition $\det g_2 = 1$ yields the following system of equations:
\begin{align*}
    (g_2)_{ii} (g_2)_{(N-i)(N-i)} = (g_2)_{ii} (g_2)_{(N+1-i)(N+1-i)} = \prod_{j = 1}^N (g_2)_{jj} = 1 \quad \text{for each $i \in \{1, \dots, n\}$}.
\end{align*}
By comparing the products $\prod_{i = 1}^n(g_2)_{ii} (g_2)_{(N-i)(N-i)}$ and $\prod_{j = 1}^N (g_2)_{jj}$, we deduce that $(g_2)_{NN} = 1$, from which it follows that all $(g_2)_{ii} = 1$, and hence that $g_2 = \on{id}$.

Next, denote the entries of $g_1$ by $(g_1)_{ij}$. Then the condition $B = g_1 \cdot B'$ implies that $B_{1(2n+1)} \equiv B_{1(2n+1)}' \pmod{p^{b_1}}$, so the condition $B_{1(2n+1)},B_{1(2n+1)}' \in \{0, \dots, p^{b_1}-1\}$ forces $B_{1(2n+1)} = B_{1(2n+1)}'$ and hence that $(g_1)_{N(N-1)} = 0$. Since $(g_1)_{N(N-1)} = 0$, the condition $A = g_1 \cdot A'$ implies that $A_{2(2n+1)} \equiv A_{2(2n+1)}' \pmod{p^{a_{1}}}$, so the condition $A_{2(2n+1)},A_{2(2n+1)}' \in \{0, \dots, p^{a_{1}}-1\}$ forces $A_{2(2n+1)} = A_{2(2n+1)}'$ and hence that $(g_1)_{21} = 0$. Continuing in this manner according to the sequence of matrix entries in~\eqref{eq-listcoffs1}, we see that the matrix entries of $(A,B)$ and $(A',B')$ coincide.
\end{proof}

\subsubsection{The subgroup $G_N \subset \on{SL}_N$} \label{sec-thesub}

Let $G_N \subset \on{SL}_N$ be the subgroup whose $R$-points are given by 
$$G_N(R) \defeq \big\{g \in \on{SL}_{N}(R) : g_{ij} = 0 \text{ for all $(i,j)$ such that }i \leq n \text{ and } j\geq n+1\big\}$$ 
for any $\Z$-algebra $R$.

An algebraic group $G$ is said to have \emph{class number $1$ over $\Q$} if the group $G(\mathbb{A}_\Q)$ of adelic rational points is the ``Frobenius'' product of the subgroup $G(\Q)$ of rational points with the subgroup $G(\mathbb{A}_{\Z})$ of adelic integral points --- i.e., we have $G(\mathbb{A}_{\Q}) = G(\Q)G(\mathbb{A}_{\Z})$. The following result establishes that this property holds for $G = G_N$:

\begin{prop} \label{prop-classone}
  The algebraic group $G_N$ has class number $1$ over $\Q$.
\end{prop}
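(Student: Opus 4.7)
The plan is to exploit the structure of $G_N$ as a standard (maximal) parabolic subgroup of $\on{SL}_N$: concretely, $G_N$ is the stabilizer in $\on{SL}_N$ of the coordinate subspace spanned by $e_{n+1},\dots,e_N$. This yields a Levi decomposition $G_N = M \ltimes \mathcal{U}$, where the unipotent radical $\mathcal{U}$ consists of matrices of the form $\left[\begin{smallmatrix} I_n & 0 \\ C & I_{n+1} \end{smallmatrix}\right]$ with $C \in \on{Mat}_{(n+1)\times n}$ (so $\mathcal{U} \simeq \mathbb{G}_a^{n(n+1)}$ as a group scheme over $\Z$), and the Levi $M = S(\on{GL}_n \times \on{GL}_{n+1})$ is the block-diagonal subgroup cut out inside $\on{GL}_n \times \on{GL}_{n+1}$ by the single relation $\det(A)\det(B) = 1$.

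First I would verify class number $1$ over $\Q$ separately for each factor. For $\mathcal{U}$ this is immediate from strong approximation for the additive group: the decomposition $\mathbb{A}_{\Q} = \Q + \mathbb{A}_{\Z}$ applied entrywise gives $\mathcal{U}(\mathbb{A}_{\Q}) = \mathcal{U}(\Q) \cdot \mathcal{U}(\mathbb{A}_{\Z})$. For $M$, I would invoke the classical fact that $\on{GL}_m$ has class number $1$ over $\Q$ for every $m$, which follows from the triviality of $\on{Cl}(\Z)$ (equivalently, every finitely generated projective $\Z$-module is free). Given $(g_1, g_2) \in M(\mathbb{A}_{\Q})$, decompose each factor separately as $g_i = h_i k_i$ with $h_i \in \on{GL}_{m_i}(\Q)$ and $k_i \in \on{GL}_{m_i}(\mathbb{A}_{\Z})$, where $(m_1,m_2) = (n,n+1)$. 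Taking determinants in the constraint $\det g_1 \det g_2 = 1$ forces $\det h_1 \det h_2 \in \Q^\times \cap \mathbb{A}_{\Z}^\times = \{\pm 1\}$; if this product equals $-1$, I would absorb the unit $s \defeq \on{diag}(-1,1,\dots,1) \in \on{GL}_{n+1}(\Z)$ by replacing $(h_2,k_2)$ with $(h_2 s,\, s^{-1} k_2)$, producing a decomposition lying inside $M$.

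Finally I would glue the two pieces via the semidirect product structure. Given arbitrary $g \in G_N(\mathbb{A}_{\Q})$, write $g = u \ell$ with $u \in \mathcal{U}(\mathbb{A}_{\Q})$ and $\ell \in M(\mathbb{A}_{\Q})$, decompose $\ell = \ell_{\Q} \ell_{\Z}$ via the Levi step, and rewrite $g = \ell_{\Q}(\ell_{\Q}^{-1} u \ell_{\Q})\ell_{\Z}$. Since $\mathcal{U}$ is normal in $G_N$, the conjugate $u' \defeq \ell_{\Q}^{-1} u \ell_{\Q}$ lies in $\mathcal{U}(\mathbb{A}_{\Q})$; decomposing $u' = u'_{\Q} u'_{\Z}$ via the unipotent step yields $g = (\ell_{\Q} u'_{\Q})(u'_{\Z} \ell_{\Z}) \in G_N(\Q) \cdot G_N(\mathbb{A}_{\Z})$, as desired. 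The main technical wrinkle I anticipate is the $\pm 1$ bookkeeping in the Levi step: this is the one place where the constraint $\det(A)\det(B) = 1$ (rather than the looser membership in $\on{GL}_n \times \on{GL}_{n+1}$) actually enters, and is the only step where the argument could conceivably fail if one replaced $\on{SL}_N$ by a group with a more exotic determinant character.
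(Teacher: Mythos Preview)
Your proof is correct and follows essentially the same route as the paper: both decompose $G_N$ as the product of its unipotent radical $\mathcal{U}$ (the paper's $H_1$) and its Levi $M = S(\on{GL}_n \times \on{GL}_{n+1})$ (the paper's $H_2$), establish class number $1$ for each factor separately, and then glue via the normality of $\mathcal{U}$ in $G_N$. The only cosmetic differences are that the paper packages the gluing step as a standalone lemma about ``Frobenius products'' (with hypothesis $\Gamma_1(\mathbb{A}_\Z)\Gamma_2(\Q) \subset \Gamma_2(\Q)\Gamma_1(\mathbb{A}_\Q)$, verified by the same conjugation identity you use), whereas you argue directly with the semidirect product structure; and conversely, you spell out the $\pm 1$ bookkeeping for the Levi step more carefully than the paper, which simply asserts that class number $1$ for $H_2$ ``follows immediately'' from the corresponding fact for $\on{GL}_n$ and $\on{GL}_{n+1}$.
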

\begin{proof}
 The idea of the proof is to realize $G_N$ as the Frobenius product of two subgroups, each of which has class number $1$ over $\Q$, and then to use this product structure to deduce that $G_N$ itself has class number $1$ over $\Q$.

 Let $H_1$ be the lower-triangular unipotent subgroup of $G_N$ whose $R$-points are given by 
 $$H_1(R) \defeq \left\{h \in G_N(R) : \begin{array}{c}h_{ii} = 1 \text{ for all $i$ and } \\ h_{jk} = 0 \text{ if $j < k$ or $j > k \geq n+1$ or $n \geq j > k$}\end{array}\right\}$$
 for any $\Z$-algebra $R$, and let $H_2$ be the block-diagonal subgroup of $G_N$ whose \mbox{$R$-points are given by}
 $$H_2(R) \defeq \{g \in \on{GL}_n(R) \times \on{GL}_{n+1}(R) : \det g = 1 \} \subset G_N(R).$$
  \begin{lemma} \label{lem-prodhgl2}
   The algebraic group $G_N$ is the Frobenius product of its subgroups $H_1$ and $H_2$; i.e., for any $\Z$-algebra $R$ we have $G_N(R) = H_1(R)H_2(R)$. Moreover, we have that $H_1(R) \cap H_2(R) = 1$.
  \end{lemma}
  \begin{proof}[Proof of Lemma~\ref{lem-prodhgl2}]
 The second claim is clear from the definitions of $H_1$ and $H_2$. As for the first claim, take $g \in G_N(R)$, and write it in box form as follows:
 \begin{equation*} 
 g = \left[\begin{array}{c|c} g' & 0 \\ \hline g''' & g'' \end{array}\right],
 \end{equation*}
 where $g' \in \on{GL}_n(R)$, $g'' \in \on{GL}_{n+1}(R)$, and $g''' \in \on{Mat}_{(n+1) \times n}(R)$. Then it is easy to see that
 \begin{equation*}
 g = \left[\begin{array}{c|c} \on{id} & 0 \\ \hline g''' {g'}^{-1} & \on{id} \end{array}\right] \times \left[\begin{array}{c|c} g' & 0 \\ \hline 0 & g'' \end{array}\right],
 \end{equation*}
 where by ``$\on{id}$'' (resp., ``$0$'') we mean the identity (resp., zero) matrix of the relevant dimensions.
  \end{proof}
  \noindent The group $H_1$ has class number $1$ over $\Q$ because it is isomorphic to the additive group $\mathbb{G}_a^{n^2+n}$; that $H_2$ has class number $1$ over $\Q$ follows immediately from the fact that the same holds for the groups $\on{GL}_n$ and $\on{GL}_{n+1}$. The next lemma gives a criterion under which the Frobenius product of two groups of class number $1$ over $\Q$ is itself a group of class number $1$ over $\Q$:
\begin{lemma} \label{lem-genclass}
Let $\Gamma$ be an algebraic group over $\Z$ that is the Frobenius product of two sub-algebraic-groups $\Gamma_1$ and $\Gamma_2$, both of which have class number $1$ over $\Q$, and suppose that $\Gamma_1(\mathbb{A}_{\Z})\Gamma_2(\Q) \subset \Gamma_2(\Q)\Gamma_1(\mathbb{A}_\Q)$. Then $\Gamma$ has class number $1$ over $\Q$.
\end{lemma}
\begin{proof}[Proof of Lemma~\ref{lem-genclass}]
Clearly, $\Gamma(\mathbb{A}_\Q) \supset \Gamma(\Q)\Gamma(\mathbb{A}_\Z)$. As for the reverse inclusion, we have that
\begin{align*}
\Gamma(\mathbb{A}_\Q) = \Gamma_1(\mathbb{A}_\Q)\Gamma_2(\mathbb{A}_{\Q}) & = \Gamma_1(\Q)\Gamma_1(\mathbb{A}_\Z)\Gamma_2(\Q)\Gamma_2(\mathbb{A}_\Z) \\
& \subset \Gamma_1(\Q)\Gamma_2(\Q)\Gamma_1(\mathbb{A}_\Q)\Gamma_2(\mathbb{A}_{\Z}) \\
& = \Gamma_1(\Q)\Gamma_2(\Q)\Gamma_1(\Q)\Gamma_1(\mathbb{A}_\Z)\Gamma_2(\mathbb{A}_{\Z}) = \Gamma(\Q) \Gamma(\mathbb{A}_\Z). \qedhere
\end{align*}
\end{proof}
\noindent By Lemma~\ref{lem-genclass}, it now suffices to check that the criterion  $H_1(\mathbb{A}_{\Z})H_2(\Q) \subset H_2(\Q)H_1(\mathbb{A}_\Q)$ holds. This is an immediate consequence of the following matrix identity:
\begin{equation} \label{eq-matident}
\left[\begin{array}{c|c} \on{id} & 0 \\ \hline g''' & \on{id} \end{array}\right] \times \left[\begin{array}{c|c} g' & 0 \\ \hline 0 & g'' \end{array}\right] = \left[\begin{array}{c|c} g' & 0 \\ \hline 0 & g'' \end{array}\right] \times \left[\begin{array}{c|c} \on{id} & 0 \\ \hline {g''}^{-1}g''' g'  & \on{id} \end{array}\right]. \qedhere
\end{equation}
\end{proof}

\subsubsection{Action of $G_N$ on $W_N^0$} \label{sec-thesubway}

 Let $W_N^0 \subset W_N$ be the linear subspace whose $R$-points are defined by
$$W_N^0(R) \defeq \big\{(A,B) \in W_N(R) : A_{ij} = B_{ij} = 0 \text{ if $i,j \leq n$}\big\}$$
for any $\Z$-algebra $R$, and notice that, when $R$ is an integral domain, every pair $(A,B) \in W_N^0(R)$ is reducible. The action of $\on{SL}_{N}$ on $W_N$ restricts to an action of $G_N$ on $W_N^0$.

We extend the definition of the $\lambda$-invariant to any pair $(A,B) \in W_N^0(R)$ by defining $\lambda(A,B)$ to be the $\lambda$-invariant of the projection of $(A,B)$ onto the linear subspace $W_N^{\on{top}}(R)$ that sends the $(n+1) \times (n+1)$ entries in the bottom-right of $A$ and $B$ to zero. A fundamental property of the $Q$-invariant is that it divides the discriminant of the invariant binary form to order two --- i.e., we have $Q(A,B)^2 \mid \on{disc}(\on{inv}(A,B))$ for any $(A,B) \in W_N^0(R)$; for a proof, see~\cite[Theorem 3.5]{sqfrval2}.

The function $\lambda$ is notably \emph{not} $G_N$-invariant, despite being invariant under the action of the subgroup $\on{SL}_n \times \on{SL}_{n+1} \subset G_N$; see, e.g., the proof of Proposition~\ref{prop-jac} (to follow). We note that the explicit section $\sigma_0$ described in \S\ref{sec-repert} has image contained in the locus of points in $W_N^0$ with $\lambda$-invariant equal to $(-1)^{n+1}$.

We now describe the action of $G_N$ on $W_N^0$ over a field $K$:
\begin{prop} \label{prop-ver2}
Let $K$ be a field, and let $f \in U_N(K)$. Then the set $\{(A,B) \in \on{inv}^{-1}(f) \cap W_N^0(K) : Q(A,B) \neq 0\}$ is nonempty and consists of a single $G_N(K)$-orbit. Moreover, the stabilizer of any element of this orbit is trivial.
\end{prop}
\begin{proof}
That $\on{inv}^{-1}(f) \cap W_N^0(R)$ is nonempty holds over any $\Z$-algebra $R$ 
because of the existence of the aforementioned explicit section $\sigma_0$.

To see that $\{(A,B) \in \on{inv}^{-1}(f) \cap W_N^0(K) : Q(A,B) \neq 0\}$ consists of a single $G_N(K)$-orbit, take $(A_0,B_0) \in \on{inv}^{-1}(f) \cap W_N^0(K)$ with $Q_0 = Q(A_0, B_0) \in K^\times$. We claim that there exists $g_1 \in (\on{SL}_n \times \on{SL}_{n+1})(K) \subset G_N(K)$ such that $(A, B) \defeq g_1 \cdot (A_0,B_0)$ has the following shape, where empty entries are used to denote zeros, and star entries are used to denote numbers whose particular values are irrelevant:
\begin{equation} \label{eq-a1b1}
(A,B) = \scriptstyle\left(\left[\begin{array}{cccc|ccccc}
& & & & & & & & 1 \\
& & & & & & & 1 & \\
& & & & & & \rddots & & \\
& & & & & 1 &  & & \\ \hline
& & & & * & * & \cdots & * & *  \\ 
& & & 1 & * & * & \cdots & * & *  \\
& & \rddots & & \vdots & \vdots & \ddots & \vdots & \vdots  \\
& 1 & & & * & * & \cdots & * & *  \\
1 & & & & * & * & \cdots & * & * \end{array}\right], \left[\begin{array}{cccc|ccccc}
& & & & & & & \pm\lambda_0 & \\
& & & & & & 1 & & \\
& & & & & \rddots & & & \\
& & & & 1 &  & & & \\ \hline
& & & 1 & * & \cdots & * & * & * \\
& & \rddots & & \vdots & \ddots & \vdots & \vdots & \vdots \\
& 1 & & & * & \cdots & * & * & * \\
\pm\lambda_0 & & & & * & \cdots & * & * & * \\ & & & & * & \cdots & * & * & * \end{array}\right]\right)
\end{equation}
In the above, we have inserted horizontal and vertical lines immediately after row and column $n$ in the matrices $A$ and $B$. The claim follows from Proposition~\ref{prop-castle} upon observing that by choosing the sign of the entry ``$\pm Q_0$'' appropriately, we can arrange for $Q(A,B) = Q_0$. In fact, by postcomposing $g_1$ with a suitable diagonal element in $G_N(K)$, we may take $(A, B)$ to be of the form~\eqref{eq-a1b1}, with the entries ``$\pm Q_0$'' replaced by ``$1$.''

Now, just as in the proof of Proposition~\ref{prop-qalltheway}, by hitting $(A, B)$ with a suitable sequence of elementary unipotent elements of $G_N(K)$, we can successively clear out the values of the following matrix entries:
\begin{align} \label{eq-listcoffs2}
& A_{(n+1)(n+2)},\, \dots,\, A_{(n+1)(2n+1)},\, B_{(n+1)(n+2)}, \, \ldots,\, B_{(n+1)(2n+1)},\, A_{(n+2)(n+3)},\, \ldots,\, A_{(n+2)(2n+1)},\, \ldots, \, \\
& \qquad B_{k(k+1)},\, \ldots,\,B_{k(2n+1)},\, A_{(k+1)(k+2)},\, \ldots,\, A_{(k+1)(2n+1)},\, \ldots,\, \nonumber \\
& \qquad\qquad B_{(2n-1)2n},\, B_{(2n-1)(2n+1)},\, A_{(2n)(2n+1)},\, B_{(2n)(2n+1)}. \nonumber
\end{align}
We may thus assume that the matrix entries of $(A,B)$ that are listed in~\eqref{eq-listcoffs2} are all equal to $0$. But then $(A,B)$ lies in the image of the section $\sigma_0$, which is sufficient.

As for the claim about stabilizers, let $(A_0, B_0)$ be as above, and suppose $g \in G_N(K)$ stabilizes $(A_0, B_0)$. By Lemma~\ref{lem-prodhgl2}, we may write $g = g_1g_2$, where $g_i \in H_i(K)$ for each $i \in \{1,2\}$. Since the projection map $W_N^0 \to W_N^{\on{top}}$ is invariant under the action of $H_1(K)$, it follows that $g_2$ must stabilize the projection of $(A_0, B_0)$ onto $W_N^{\on{top}}(K)$. Thus, by Proposition~\ref{prop-castle}, we have $g_2 = \on{id}$. Now, it is clear by inspection that the action of $H_1(K)$ on $W_N^0(K)$ has trivial stabilizers, so $g_1 = \on{id}$, and thus $g = g_1g_2 = \on{id}$ too.
%
\end{proof}

We now work over $\Z_p$ for a prime $p$. Note that the $p$-adic absolute value $|Q(A,B)|_p$ is invariant under the action of $G_N(\Z_p)$. The following result describes the action of $G_N(\Z_p)$ on the locus of pairs in $W_N^0(\Z_p)$ with $p$-adic unit $Q$-invariant. 

\begin{prop} \label{prop-singleQ}
Let $f \in U_N(\Z_p)$. Then the set $\{(A,B) \in \on{inv}^{-1}(f) \cap W_N^0(\Z_p) : |Q(A,B)|_p = 1\}$ is nonempty and consists of a single $G_N(\Z_p)$-orbit. Moreover, the stabilizer of any element of this orbit is trivial.
\end{prop}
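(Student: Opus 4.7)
The plan is to handle nonemptiness, trivial stabilizers, and the single-orbit claim in sequence, bootstrapping from Proposition~\ref{prop-ver2} applied with $K = \Q_p$. For nonemptiness, the section $\sigma_0 \colon U_N \to W_N^0$ defined over $\Z$ (referenced in \S\ref{sec-repert}) has image contained in the locus $\{Q = 1\}$, so $\sigma_0(f) \in \on{inv}^{-1}(f) \cap W_N^0(\Z_p)$ witnesses $|Q|_p = 1$. Triviality of the $G_N(\Z_p)$-stabilizer is immediate from Proposition~\ref{prop-ver2} over $\Q_p$, since any $\Z_p$-stabilizer injects into the corresponding (trivial) $\Q_p$-stabilizer.

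For the single-orbit claim, I take any $(A,B)$ in the set and show it is $G_N(\Z_p)$-equivalent to $\sigma_0(f)$. Proposition~\ref{prop-ver2} over $\Q_p$ produces $g \in G_N(\Q_p)$ with $g \cdot \sigma_0(f) = (A,B)$, and the task is to replace $g$ by an element of $G_N(\Z_p)$. I use the Levi decomposition $G_N = H_1 H_2$ of Lemma~\ref{lem-prodhgl2} and proceed in two stages. In the first stage, I normalize the top-right $n \times (n+1)$ block of $(A,B)$ using $H_2(\Z_p) \supset (\on{SL}_n \times \on{SL}_{n+1})(\Z_p)$: since $H_1$ acts trivially on the projection $\pi \colon W_N^0 \to W_N^{\on{top}}$ (a direct check on the matrix form of $H_1$-elements, as in the computation of Proposition~\ref{prop-ver2}), the projected pairs $\pi(A,B),\, \pi(\sigma_0(f)) \in W_N^{\on{top}}(\Z_p)$ are related by the $H_2$-part of $g$; invoking an $H_2$-analog of Lemma~\ref{lem-meettri} together with the $|Q|_p = 1$ (equivalently $\vec{a} = \vec{b} = \vec{0}$) case of Proposition~\ref{prop-padicfund} yields an element $h \in H_2(\Z_p) \subset G_N(\Z_p)$ such that $h \cdot (A,B)$ has the same top-right block as $\sigma_0(f)$.

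In the second stage, I clear the residual discrepancy, which now lives purely in the bottom-right symmetric $(n+1) \times (n+1)$ blocks. Mimicking the proof of Proposition~\ref{prop-ver2}, I apply elementary unipotent elements of $H_1(\Z_p)$ to successively kill the entries listed in~\eqref{eq-listcoffs2}. The pivots for these clearings are entries of the (now fixed) top-right block, namely the antidiagonal of $A$ and anti-subdiagonal of $B$, all of which are $p$-adic units under the hypothesis $|Q|_p = 1$; consequently each elementary unipotent transformation lies in $H_1(\Z_p)$, not merely in $H_1(\Q_p)$. Composing these $\Z_p$-integral moves realizes the desired $G_N(\Z_p)$-equivalence. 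The chief obstacle is the integral Iwasawa-type reduction in the first stage: although $Q$ is only a \emph{relative} invariant under the larger group $H_2$, the condition $|Q|_p = 1$ is preserved because $Q$ transforms by a unit character on $H_2(\Z_p)$, and it is precisely this feature that makes the fundamental-domain picture for the $\vec{a} = \vec{b} = \vec{0}$ orbits in Proposition~\ref{prop-padicfund} match up integrally.
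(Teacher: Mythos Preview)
Your argument is correct and follows essentially the same route as the paper's: the paper reduces to $W_N^{00}(\Z_p)$ via Lemma~\ref{lem-meettri} and then says ``the proof of Proposition~\ref{prop-ver2} can be easily adapted,'' while you spell out that adaptation explicitly via the two-stage $H_1 H_2$ decomposition. Two small remarks: you do not actually need an ``$H_2$-analog'' of Lemma~\ref{lem-meettri} (the lemma itself suffices, followed by a diagonal element of $H_2(\Z_p)$ to normalize the unit $Q$-value to $1$), and your assertion that the second-stage clearings lie entirely in $H_1(\Z_p)$ is correct but is a claim that goes slightly beyond what Proposition~\ref{prop-ver2} literally states, so it would be worth noting that it follows because the pivots for the entries in~\eqref{eq-listcoffs2} all live in the top-right block.
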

\begin{proof}
The nonemptiness statement follows immediately from the existence of the explicit section $\sigma_0$, which has image contained in the locus of points in $W_N^0$ with $Q$-invariant equal to $\pm 1$.

 Next, let $W_N^{00} \subset W_N^0$ be the linear subspace whose $R$-points are given by
$$W_N^{00}(R) \defeq \big\{(A,B) \in W_N^0(R) : A_{ij} = 0\text{ if $i + j \leq 2n+1$ and }B_{ij} = 0\text{ if $i + j \leq 2n$}\big\}.$$
 We remark that the explicit section $\sigma_0$ described in \S\ref{sec-repert} has image contained in $W_N^{00}$. For the transitivity statement, by Lemma~\ref{lem-meettri} it suffices to show that $\{(A,B) \in \on{inv}^{-1}(f) \cap W_N^{00}(\Z_p) : |\lambda(A,B)|_p = 1\}$ is contained in a single $G_N(\Z_p)$-orbit, and the proof of Proposition~\ref{prop-ver2} can be easily adapted to verify this.

Finally, the statement about stabilizers follows immediately from Proposition~\ref{prop-ver2}, which implies that the stabilizer in $G_N(\Q_p)$, and hence in $G_N(\Z_p)$, of any $(A,B) \in \on{inv}^{-1}(f) \cap W_N^0(\Z_p)$ is trivial. This concludes the proof of Proposition~\ref{prop-singleQ}.
\end{proof}

\subsubsection{Fundamental domain for $G_N(\Z_p) \curvearrowright W_N^0(\Z_p)$}

Using the results of the preceding subsubsections, we now construct a fundamental domain for the action of $G_N(\Z_p)$ on $W_N^0(\Z_p)$. Given $\vec{a} = (a_1, \dots, a_n), \, \vec{b} = (b_1, \dots, b_n) \in \mathbb{N}^n$, define a subset $\mc{W}_{\vec{a},\vec{b}}(p)$ as follows:
\begin{align} \label{eq-defofws}
\mc{W}_{\vec{a},\vec{b}}(p) & \defeq \left\{(A,B) \in W_N^{00}(\Z_p) : \begin{array}{c} A_{ij} = p^{a_{i}} \text{ for $(i,j)$ with $i + j = N+1$,} \\ B_{ij} = p^{b_i} \text{ for $(i,j)$ with $i + j = N$}\end{array} 
\right\}
\end{align}
The following result gives the desired fundamental domain in terms of the sets $\mc{W}_{\vec{a},\vec{b}}(p)$:
\begin{prop} \label{prop-padicfund2}
A fundamental domain for the action of $G_N(\Z_p)$ on the set $\{(A,B) \in W_N^{0}(\Z_p) : Q(A,B) \neq 0\}$ is given by $$\bigsqcup_{\substack{\vec{a},\vec{b}\in \mathbb{N}^n}} \mc{F}_{\vec{a},\vec{b}}(p),$$ where the sets $\mc{F}_{\vec{a},\vec{b}}(p)$ are defined as follows:
\small
\begin{equation} \label{eq-funder3}
\mc{F}_{\vec{a},\vec{b}}(p) \defeq \left\{(A,B) \in \mc{W}_{\vec{a}, \vec{b}}(p) : \hspace*{-3pt}\begin{array}{c} A_{ij} \in \{0,\dots,p^{ a_{N+1-j}}-1\} \text{ for $(i,j)$ with $i + j > N+1$, $i < j$,} \\ B_{ij} \in \{0, \dots, p^{b_i}-1\} \text{ for $(i,j)$ with $i + j > N$, $i \leq n$,} \\ B_{ij} \in \{0, \dots, p^{b_{N-i}}-1\} \text{ for $(i,j)$ with $i + j > N$, $n < i < j$,}\end{array}\right\}
\end{equation}
\normalsize
\end{prop}
\begin{proof}
This follows by adapting the proof of Proposition~\ref{prop-ver2} to work over $\Z_p$ (just as we adapted the proof of Proposition~\ref{prop-qalltheway} to obtain Proposition~\ref{prop-padicfund}). It follows from Proposition~\ref{prop-padicfund} that every $(A',B') \in W_N^0(\Z_p)$ with $Q(A',B') \neq 0$ is $G_N(\Z_p)$-equivalent to an element $(A,B) \in W_N^{00}(\Z_p)$ with $A_{ij} = p^{a_i}$ for $(i,j)$ with $i + j = N+1$, with $B_{ij} = p^{b_i}$ for $(i,j)$ with $i + j = N$ and $i > 1$, and $B_{1(N-1)} = u \cdot p^{b_1}$ for some $u \in \Z_p^\times$, where $\vec{a},\, \vec{b} \in \mathbb{N}^n$. Using the action of the diagonal matrix with diagonal entries $(u^{-1}, 1, \dots, 1, u) \in G_N(\Z_p)$, we can further arrange that $(A,B) \in \mc{W}_{\vec{a},\vec{b}}(p)$. So take $(A,B) \in \mc{W}_{\vec{a},\vec{b}}(p)$. 
Instead of using elementary unipotent transformations to make the remaining nondiagonal matrix entries zero, we use these transformations to reduce these matrix entries modulo $p^{a_1}, \dots, p^{a_n}, p^{b_1}, \dots, p^{b_n}$. This establishes that each orbit for the action of $G_N(\Z_p)$ on $\mc{W}_{\vec{a},\vec{b}}(p)$ meets $\mc{F}_{\vec{a},\vec{b}}(p)$ at least once. 

On the other hand, suppose we have $(A,B), (A',B') \in \mc{F}_{\vec{a},\vec{b}}(p)$ and $g \in G_N(\Z_p)$ such that $(A,B) = g \cdot (A',B')$. Write $g = g_1g_2$, where $g_i \in H_i(\Z_p)$ for each $i \in \{1,2\}$, and factor $g_2$ as $g_2 = g_2'g_2''$, where $g_2'$ is a diagonal matrix with diagonal entries $(u,1, \dots, 1, u^{-1})$ for some $u \in \Z_p^\times$, and where $g_2'' \in (\on{SL}_n \times \on{SL}_{n+1})(\Z_p)$. Since $g_1^{-1} \cdot(A, B)$ and $(A',B')$ both lie in $\mc{W}_{\vec{a},\vec{b}}(p)$, we have by~\eqref{eq-ogq} along the invariance of $Q$ under the action of $\on{SL}_n \times \on{SL}_{n+1}$ that $$Q(g_1^{-1} \cdot(A, B)) = Q(g_2'' \cdot (A',B')).$$ But $g_1^{-1} \cdot (A,B) = g_2' \cdot (g_2'' \cdot (A',B'))$, so $$u \cdot Q(g_2'' \cdot (A',B')) = Q(g_2' \cdot (g_2'' \cdot (A',B')) = Q(g_2'' \cdot (A',B')),$$ from which it follows that $u = 1$ and $g_2 = g_2'' \in (\on{SL}_n \times \on{SL}_{n+1})(\Z_p)$. Then, since the projections of $g_1^{-1}\cdot (A,B)$ and $(A',B')$ onto $W_N^{\on{top},0}(\Z_p)$ belong to the fundamental domain~\eqref{eq-funder}, it follows that $g_2 = \on{id}$.

Next, denote the entries of $g_1$ by $(g_1)_{ij}$. The condition $A = g_1 \cdot A'$ implies that $A_{(n+1)(n+2)} \equiv A_{(n+1)(n+2)}' \pmod{p^{a_n}}$, and then the condition $A_{(n+1)(n+2)}, A_{(n+1)(n+2)}' \in \{0,\dots, p^{a_n}-1\}$ forces $A_{(n+1)(n+2)} = A_{(n+1)(n+2)}'$ and hence that $(g_1)_{(n+1)n} = 0$. As $(g_1)_{(n+1)n} = 0$, the condition $A = g_1 \cdot A'$ implies that $A_{(n+1)(n+3)} \equiv A_{(n+1)(n+3)}' \pmod{p^{a_{n-1}}}$, so the condition  $A_{(n+1)(n+3)}, A_{(n+1)(n+3)}' \in \{0,\dots, p^{a_{n-1}}-1\}$ forces $A_{(n+1)(n+3)} = A_{(n+1)(n+3)}'$ and hence that $(g_1)_{(n+1)(n-1)} = 0$. Continuing in this manner according to the sequence of matrix entries in~\eqref{eq-listcoffs2} (and using the condition $B = g_1 \cdot B'$ when appropriate), we deduce that $g_1 = \on{id}$, and hence that $(A,B) = (A',B')$.
\end{proof}

We now cover each of the sets $\mc{F}_{\vec{a},\vec{b}}(p)$ with a finite disjoint union of images of sections of the map $\on{inv}$. Fix $\vec{a},\vec{b} \in \mathbb{N}^n$. Fix the matrix entries $A_{ij} = p^{a_i}$ for $(i,j)$ with $(i,j) = N+1$, $B_{ij} = p^{b_i}$ for $(i,j)$ with $i+j = N$, just as in~\eqref{eq-defofws}. Further fix matrix entries $A_{ij} \in \{0,\dots, p^{a_{N+1-j}}-1\}$ for each $(i,j)$ with $i + j > N+1$ and $i < j$, $B_{ij} \in \{0, \dots, p^{b_i}-1\}$ for each $(i,j)$ with $i + j > N$ and $i \leq n$, and $B_{ij} \in \{0, \dots, p^{b_{N-i}}-1\}$ for each $(i,j)$ with $i + j > N$ and $n < i < j$, just as in~\eqref{eq-funder3}. Call the data of these chosen matrix entries $(A^\circ, B^\circ)$; then we may regard $(A^\circ, B^\circ)$ as a function on $\Z_p^{N+1}$ which takes a tuple $t = \big(A_{(n+1)(n+1)}, \dots, A_{NN}, B_{(n+1)(n+1)}, \dots, B_{NN}\big) \in \Z_p^{N+1}$ and produces the pair of matrices $(A,B)$ with the previously fixed off-diagonal matrix entries, and with diagonal entries given by the components of $t$. 

We may now speak of the quantity $\on{inv}(A^\circ, B^\circ)$, which is a binary form whose coefficients may be thought of as an affine linear transformation of the $N+1$ indeterminates $$A_{(n+1)(n+1)}, \dots, A_{NN}, B_{(n+1)(n+1)}, \dots, B_{NN}.$$ 
Write the coefficients of the indeterminates in an $(N+1) \times (N+1)$ matrix $M(A^\circ, B^\circ)$ as follows: the $i^{\mathrm{th}}$ row corresponds to the coefficient of $x^{N-i+1}y^{i-1}$ in $\on{inv}(A^\circ, B^\circ)$, and the columns correspond to the coefficients of $A_{(n+1)(n+1)}, B_{(n+1)(n+1)}, A_{(n+2)(n+2)}, B_{(n+2)(n+2)},\dots, A_{NN}, B_{NN}$, in that order. Then one verifies by inspection that $M(A^\circ, B^\circ)$ is lower-triangular, and the diagonal entries are monomials in the matrix entries $A_{ij}$ with $i + j = N+1$ and $(i,j) \neq (n+1,n+1)$, along with the matrix entries $B_{ij}$ with $i+j = N$ (note that these are precisely the matrix entries dividing the $Q$-invariant, and that none of them are zero). Thus, $M(A^\circ, B^\circ)$ is invertible over $\Q_p$, and there exists a column vector $C(A^\circ,B^\circ) \in \Z_p^{N+1}$ such that
$$M(A^\circ,B^\circ) \cdot {\small\left[\begin{array}{ccccccc} A_{(n+1)(n+1)} & B_{(n+1)(n+1)} & A_{(n+2)(n+2)} & B_{(n+2)(n+2)} & \cdots & A_{NN} & B_{NN} \end{array}\right]^T} + C(A^\circ,B^\circ)$$
is the column vector whose entries are the coefficients of $\on{inv}(A^\circ,B^\circ)$. Consequently, we arrive at the following result:

\begin{lemma} \label{lem-secs}
Let $(A^\circ, B^\circ)$ be fixed as above. Given $f \in U_N(\Z_p)$, there exists a unique tuple $t \in \Q_p^{N+1}$ such that $\on{inv}(A^\circ,B^\circ)(t) = f$. Moreover, for any $G_N(\Z_p)$-invariant subset $\mathfrak{S}_p \subset W_N^0(\Z_p)$ that is the preimage under reduction modulo $p^j$ of a nonempty subset of $W_N^0(\Z/p^j\Z)$ for some $j > 0$, the set $\mc{U}(A^\circ,B^\circ)$ of forms $f \in U_N(\Z_p)$ such that $(A^\circ,B^\circ)(t) \in \mc{F}_{\vec{a},\vec{b}}(p) \cap \mathfrak{S}_p$, where $t \in \Z_p^{N+1}$ is the tuple corresponding to $f$, is the closure of a nonempty open subset. In fact, $\mc{U}(A^\circ,B^\circ)$ is defined by congruence conditions modulo a power of $p$ depending only on $\vec{a}$, $\vec{b}$, $n$, and $j$.
\end{lemma}
%

Proposition~\ref{prop-padicfund2} and Lemma~\ref{lem-secs} 
imply the following result, which plays a crucial role in the proof of Theorem~\ref{thm-main2} (see \S\ref{sec-themain2}):
\begin{prop} \label{lem-loco}
    For any $\mathfrak{S}_p$ as in Lemma~\ref{lem-secs}, the function that sends $f \in U_N(\Z_p)$ to the number of $G_N(\Z_p)$-equivalence classes of pairs in $\on{inv}^{-1}(f) \cap \mc{W}_{\vec{a},\vec{b}}(p) \cap \mathfrak{S}_p$ is locally constant. This function is defined by congruence conditions modulo a power of $p$ depending only on $\vec{a}$, $\vec{b}$, $n$, and $\mathfrak{S}_p$, and is also absolutely bounded by \mbox{such a power of $p$.}
\end{prop}
\begin{proof}
We first prove local constancy. By Proposition~\ref{prop-padicfund2}, it suffices to show that the function that sends $f \in U_N(\Z_p)$ to the number of pairs $(A,B) \in \mc{F}_{\vec{a},\vec{b}}(p) \cap \mathfrak{S}_p$ with $\on{inv}(A,B) = f$ is locally constant.
    So take $f \in U_N(\Z_p)$. Among the pairs $(A^\circ, B^\circ)$ constructed above, let $S_f$ be the subset of pairs such that $f \in \mc{U}(A^\circ, B^\circ)$, and let $\ol{S}_f$ be the complement of $S_f$. Let $\mc{U}_1 =  \bigcup_{(A^\circ, B^\circ) \in \ol{S}_f} \mc{U}(A^\circ, B^\circ)$, and let $\mc{U}_2= \bigcap_{(A^\circ, B^\circ) \in S_f} \mc{U}(A^\circ,B^\circ)$. Then for any $g \not\in \mc{U}_1$ (resp., $g \in \mc{U}_2$), the number of pairs $(A,B) \in \mc{F}_{\vec{a},\vec{b}}(p) \cap \mathfrak{S}_p$ with $\on{inv}(A,B) = g$ is at most (resp., at least) $\#S_f$. Hence, for any $g \in \mc{U}_2 - \mc{U}_1$, the number of pairs $(A,B) \in \mc{F}_{\vec{a},\vec{b}}(p)\cap \mathfrak{S}_p$ with $\on{inv}(A,B) = g$ is equal to $\#S_f$. This establishes the desired local constancy, as each $\mc{U}(A^\circ, B^\circ)$ is the closure of a nonempty open subset, so $\mc{U}_2 - \mc{U}_1$ is an open neighborhood of $f$.

    That the function is defined by congruence conditions modulo a power of $p$ depending only on $\vec{a}$, $\vec{b}$, and $n$ follows from the fact that this holds for each of the sets $\mc{U}(A^\circ, B^\circ)$, and hence also for the set $\mc{U}_2 - \mc{U}_1$. Finally, the claimed bound is an immediate corollary of Proposition~\ref{prop-padicfund2} and Lemma~\ref{lem-secs}.
\end{proof}

\subsection{Proof of Theorem~\ref{thm-stronglocglob}} \label{sec-rep}

\noindent We claim that the integral orbits of $G_N$ on $W_N^0$ satisfy the desired local-to-global principle as long as the following four properties hold:
\begin{enumerate}
    \item[(1)] The algebraic group $G_N$ has class number $1$ over $\Q$;
  \item[(2)] For every $f \in U_N(\C)$ with nonzero discriminant, the set $\on{inv}^{-1}(f) \cap W_N^0(\C)$ is nonempty and consists of a single $G_N(\C)$-orbit; and
  \item[(3)] For every $f \in U_N(\C)$ with nonzero discriminant, each element of $\on{inv}^{-1}(f) \cap W_N^0(\C)$ has trivial stabilizer in $G_N(\C)$.
  \item[(4)] For every $f \in U_N(\Z)$ with nonzero discriminant, the set $\on{inv}^{-1}(f) \cap W_N^0(\Z)$ is nonempty.
\end{enumerate}
This claim follows from~\cite[Theorem~22]{cuspy}, which is a general theorem giving criteria under which the orbits of a finite-dimensional representation of an algebraic group satisfy a local-to-global principle. 
Note that the first three properties above have already been verified in Propositions~\ref{prop-classone} and~\ref{prop-ver2}, and the fourth property follows immediately from the existence of the explicit section $\sigma_0$. This completes the proof of Theorem~\ref{thm-stronglocglob}. \hspace*{\fill}\qed

\medskip

For the sake of concreteness, we now give a self-contained proof of Theorem~\ref{thm-stronglocglob}. Given a principal ideal domain $R$ with fraction field $K$ and an element $w \in W_N^0(R)$ having nonzero $Q$-invariant, write $G_N(K)_{w} \defeq \{g \in G_N(K) : g \cdot w \in G_N(R)\}$. Then the set of $G_N(R)$-orbits contained in the $G_N(K)$-orbit of $w$ is in bijection with the double coset space $G_N(R) \backslash G_N(K)_{w}/\on{Stab}_{G_N(K)}(w) = G_N(R) \backslash G_N(K)_{w}$, where the last step follows from Proposition~\ref{prop-ver2}.

Now, fix $w_0 \in W_N^0(\Z)$ with $Q(w_0) \neq 0$, and suppose for each prime $p$ we have $w_p \in W_N^0(\Z_p)$ with $\on{inv}(w_p) = f$. Our goal is to construct an element $w \in W_N^0(\Z)$, unique up to the action of $G_N(\Z)$, that is $G_N(\Z_p)$-equivalent to $w_p$ for each prime $p$. To do this, consider the diagonal embedding $G_N(\Q) \hookrightarrow \prod_p G_N(\Q_p)$. We claim that this embedding induces a bijection
\begin{equation} \label{eq-locglobbij}
G_N(\Z) \backslash G_N(\Q)_{w_0} \longrightarrow \prod_p G_N(\Z_p) \backslash G_N(\Q_p)_{w_0}.
\end{equation}
Note that the product on the right-hand side of~\eqref{eq-locglobbij} is in fact a finite product, because if $p$ is a prime such that $G_N(\Z_p) \backslash G_N(\Q_p)_{w_0} \neq 1$, then by Proposition~\ref{prop-singleQ}, we must have $p \mid Q(w_p) \mid \on{disc}(f)$. Verifying injectivity of the map in~\eqref{eq-locglobbij} is easy: if $g_1, g_2 \in G_N(\Q)_{w_0}$ have the same image, then $g_1g_2^{-1} \in G_N(\Q) \cap \bigcap_p G_N(\Z_p) = G_N(\Z)$. As for surjectivity, if $(g_p)_p \in \prod_p G_N(\Z_p) \backslash G_N(\Q_p)_{w_0}$, then since $G_N$ has class number $1$ over $\Q$ (by Proposition~\ref{prop-classone}), there exists $g \in G_N(\Q)$ such that $g$ maps to $g_p$ under the map $G_N(\Q) \to G_N(\Z_p) \backslash G_N(\Q_p)$; but then $g \cdot w_0 \in W_N^0(\Q) \cap \bigcap_p W_N^0(\Z_p) = W_N^0(\Z)$, implying that $g \in G_N(\Q)_{w_0}$.


By Proposition~\ref{prop-ver2}, which implies that the $G_N(\Q_p)$-orbit of $w_0$ is equal to that of $w_p$ for each prime $p$, we may view the tuple $(w_p)_p$ as an element of the right-hand side of~\eqref{eq-locglobbij}. Then, under the bijection, the tuple $(w_p)_p$ corresponds to the $G_N(\Z)$-orbit of the desired element $w \in W_N^0(\Z)$, as necessary.  \hspace*{\fill}\qed

\medskip

As an immediate consequence of Theorem~\ref{thm-stronglocglob} along with Proposition~\ref{prop-singleQ}, we have the following result concerning global integral orbits having unit $Q$-invariant:
\begin{corollary} \label{cor-oneandonly}
Let $f \in U_N(\Z)$. Then the set $\{(A,B) \in \on{inv}^{-1}(f) \cap W_N^0(\Z) : |Q(A,B)| = 1\}$ is nonempty and consists of a single $G_N(\Z)$-orbit.
\end{corollary}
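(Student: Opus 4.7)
The plan is to combine the local-to-global principle of Theorem~\ref{thm-stronglocglob} with the local orbit classification of Proposition~\ref{prop-singleQ}, as the statement itself hints.

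For existence, at each prime $p$ Proposition~\ref{prop-singleQ} furnishes a pair $(A_p,B_p) \in \on{inv}^{-1}(f) \cap W_N^0(\Z_p)$ with $|\lambda(A_p,B_p)|_p = 1$. Feeding this coherent family of local orbits into Theorem~\ref{thm-stronglocglob} produces a global pair $(A,B) \in \on{inv}^{-1}(f) \cap W_N^0(\Z)$ that is $G_N(\Z_p)$-equivalent to $(A_p,B_p)$ for every prime $p$.

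The step that requires genuine care --- and which I regard as the main thing to verify --- is that this global pair actually satisfies $|\lambda(A,B)| = 1$, since the $\lambda$-invariant is explicitly \emph{not} $G_N$-invariant. To confirm this, I would use the factorization $G_N = H_1 H_2$ of Lemma~\ref{lem-prodhgl2}. A direct block-matrix computation shows that the projection $W_N^0 \to W_N^{\on{top}}$ is fixed by the unipotent subgroup $H_1$ (the top-right $n \times (n+1)$ block is unchanged by conjugation by a lower-triangular unipotent), so $\lambda$ is $H_1$-invariant. Under an element $(g',g'') \in H_2(\Z_p)$, the top-right block transforms as $M \mapsto g' M {g''}^T$, so $\lambda$ scales by a monomial in $\det g'$ and $\det g''$; both determinants are $p$-adic units since their product is $1$. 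Hence $|\lambda|_p$ is $G_N(\Z_p)$-invariant, and we deduce $|\lambda(A,B)|_p = |\lambda(A_p,B_p)|_p = 1$ for every $p$, so $|\lambda(A,B)| = 1$.

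For uniqueness, suppose $(A,B)$ and $(A',B')$ both lie in the set. At each prime $p$, both pairs lie in $\{w \in \on{inv}^{-1}(f) \cap W_N^0(\Z_p) : |\lambda(w)|_p = 1\}$, which is a single $G_N(\Z_p)$-orbit by Proposition~\ref{prop-singleQ}. Hence the two pairs are $G_N(\Z_p)$-equivalent at every prime $p$, and the uniqueness clause of Theorem~\ref{thm-stronglocglob} then forces them to lie in the same $G_N(\Z)$-orbit.
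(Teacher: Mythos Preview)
Your proof is correct and follows essentially the same approach as the paper, which simply states that the corollary is an immediate consequence of Theorem~\ref{thm-stronglocglob} and Proposition~\ref{prop-singleQ}. Your explicit verification that $|\lambda|_p$ is $G_N(\Z_p)$-invariant (via the decomposition $G_N = H_1 H_2$) fills in a detail the paper leaves implicit; note that existence could alternatively be obtained in one line from the section $\sigma_0$, which the paper remarks has image contained in the locus $\lambda = 1$.
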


\section{Asymptotic formulae for the count of reducible orbits} \label{sec-pfthmain2}

In this section, we use the local-to-global principle in Theorem~\ref{thm-stronglocglob} to deduce Theorem~\ref{thm-main2}, which gives an asymptotic formula for the count of reducible $\on{SL}_N(\Z)$-orbits on $W_N(\Z)$ in terms of a product of local integrals. We then perform a change-of-variables argument to rewrite each of these integrals in a more convenient form, thus proving Theorem~\ref{thm-main3}.

\subsection{Proof of Theorem~\ref{thm-main2}} \label{sec-themain2}

Let $\mathfrak{S}$ be a big family in $W_N^0(\Z)$. We start by proving the following asymptotic formula for the count of $G_N(\Z)$-orbits on $\mathfrak{S}$ of bounded height.
\begin{theorem} \label{thm-acceptcubic2}
The number of $G_N(\Z)$-orbits on $\mathfrak{S}$ of height up to $X$ is given by
\begin{equation} \label{eq-midacceptcubic2}
 \on{N}_N^{(r)}(X) \times \prod_p \int_{f \in U_N(\Z_p) } \#\left(\frac{\on{inv}^{-1}(f)\cap \mathfrak{S}_p}{G_N(\Z_p)}\right) df + o\big(X^{N+1}\big).
\end{equation}
Moreover, when $N = 3$, the number of $(\on{GL}_2 \times G_3)(\Z)$-orbits on $\mathfrak{S}$ with discriminant having absolute value less than $X$ is given by
\begin{equation} \label{eq-midacceptcubic2GL2}
 \on{N}_\Delta^{(r)}(X) \times \prod_p \int_{f \in U_3(\Z_p) } \#\left(\frac{\on{inv}^{-1}(f)\cap \mathfrak{S}_p}{G_3(\Z_p)}\right) df + o(X),
\end{equation}
where $\on{N}_\Delta^{(r)}(X)$ is the number of $\on{GL}_2(\Z)$-orbits of irreducible binary cubic forms in $U_3(\Z)^{(r)}$ of discriminant up to $X$ in absolute value.
\end{theorem}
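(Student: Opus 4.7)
The plan is to combine the local-to-global principle of Theorem~\ref{thm-stronglocglob} with an equidistribution result for integer binary forms in residue classes and the squarefree sieve for binary forms of Bhargava--Shankar--Wang~\cite{sqfrval2}.

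First I would apply Theorem~\ref{thm-stronglocglob} together with the triviality of stabilizers (Proposition~\ref{prop-ver2}) to obtain, for every $f \in U_N(\Z)^{(r)}$ of nonzero discriminant, a bijection between the $G_N(\Z)$-orbits on $\on{inv}^{-1}(f) \cap \mathfrak{S}$ and the collection of tuples of local $G_N(\Z_p)$-orbits on $\on{inv}^{-1}(f) \cap \mathfrak{S}_p$. Writing $n_p(f) \defeq \#\bigl(\on{inv}^{-1}(f) \cap \mathfrak{S}_p / G_N(\Z_p)\bigr)$, this yields the identity
\[
\#\bigl(\{(A,B) \in \mathfrak{S} : \on{H}(A,B) \leq X\}/G_N(\Z)\bigr) = \sum_{\substack{f \in U_N(\Z)^{(r)} \\ \on{H}(f) \leq X}} \prod_p n_p(f).
\]
Condition~(2) in the definition of a big family, combined with Proposition~\ref{prop-singleQ}, guarantees that $n_p(f) = 1$ for every $p \gg 1$ at which the relative invariant $\lambda$ of any representative is a $p$-adic unit, so the product is effectively finite.

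Next, for any finite set $T$ of primes and a common exponent $j$ realizing the mod-$p^j$ description of $\mathfrak{S}_p$ at each $p \in T$, the function $f \mapsto \prod_{p \in T} n_p(f)$ is locally constant modulo $\prod_{p \in T} p^j$. Standard uniform distribution of integer binary $N$-ic forms of bounded height in residue classes then gives
\[
\sum_{\substack{f \in U_N(\Z)^{(r)} \\ \on{H}(f) \leq X}} \prod_{p \in T} n_p(f) = \on{N}_N^{(r)}(X) \prod_{p \in T} \int_{U_N(\Z_p)} n_p(f)\,df + o_T(X^{N+1}).
\]
The main obstacle is justifying the Eulerian factorization across all primes, i.e., showing that the contribution from primes outside $T$ is negligible uniformly as $T$ exhausts the set of all primes. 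This is where the squarefree sieve enters: Lemma~\ref{lem-padicbound} provides the crude bound $n_p(f) \ll p^{C_n e_p}$ where $e_p = \nu_p(\lambda)$, and the squarefree sieve of~\cite{sqfrval2}, applied to the defining polynomial of $\lambda$, bounds the contribution from $f \in U_N(\Z)^{(r)}$ of height $\leq X$ whose $\lambda$-weight comes primarily from primes $p > M$, uniformly in $M$. Sending $M \to \infty$ produces the required tail estimate $o(X^{N+1})$ and establishes~\eqref{eq-midacceptcubic2}.

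For the $\GL_2$-equivariant variant~\eqref{eq-midacceptcubic2GL2} in the case $N=3$, the same strategy applies with two modifications. First, since the $G_3$-stabilizer of any $(A,B) \in W_3^0$ with nonzero invariant is trivial, the $(\GL_2 \times G_3)(\Z)$-orbits above a form $f$ with trivial $\GL_2(\Z)$-stabilizer correspond bijectively to the $G_3(\Z)$-orbits above $f$; the forms $f$ with nontrivial $\GL_2(\Z)$-stabilizer form a sparse set contributing $o(X)$ to the count. Second, I would replace the height-based asymptotic $\on{N}_N^{(r)}(X)$ by Davenport's count $\on{N}_\Delta^{(r)}(X)$ of $\GL_2(\Z)$-orbits of irreducible binary cubic forms of bounded discriminant, and use the corresponding equidistribution of these orbits in residue classes modulo $p^j$. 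The squarefree sieve then controls the tail in the same manner, yielding~\eqref{eq-midacceptcubic2GL2}.
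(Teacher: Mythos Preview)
Your overall architecture --- local-to-global (Theorem~\ref{thm-stronglocglob}) plus equidistribution plus a tail estimate --- matches the paper's, but the way you deploy the relative invariant $\lambda$ is incoherent, and this is exactly where the argument breaks.

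The function $\lambda$ is defined on $W_N^0$, not on $U_N$; it is not $G_N$-invariant (only $|\lambda|$ is $G_N(\Z)$-invariant), and it is certainly not a function of $f = \on{inv}(A,B)$. Over a fixed $f$ there are $G_N(\Z_p)$-orbits with many different values of $\nu_p(\lambda)$, so the clause ``$n_p(f) = 1$ for every $p \gg 1$ at which $\lambda$ of any representative is a $p$-adic unit'' has no content: which primes those are depends on which orbit you pick. Likewise, Lemma~\ref{lem-padicbound} does \emph{not} give a bound of the form $n_p(f) \ll p^{C_n e_p}$; it only bounds the number of orbits whose $\lambda$-valuation is \emph{equal to a prescribed} $e_p$, and summing that bound over $e_p \ge 0$ diverges. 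Finally, since $\lambda$ is not a polynomial in the coefficients of $f$, ``applying the squarefree sieve to the defining polynomial of $\lambda$'' is not a meaningful operation. So as written you have no mechanism to truncate the product over primes or to control the tail.

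The paper fixes this by stratifying first by the global value $|\lambda| = \mathfrak{b}$, not by a finite set of primes. For fixed $\mathfrak{b}$, Proposition~\ref{prop-singleQ} gives exactly one local orbit at every $p \nmid \mathfrak{b}$, and Lemma~\ref{lem-padicbound} gives a finite bound depending only on $p^{e_p}$ at each $p \mid \mathfrak{b}$; the orbit-count above $f$ with $|\lambda| = \mathfrak{b}$ is then a genuinely locally constant function of $f$, to which the sieve~\cite[Theorem~5]{sqfrval2} applies. Summing over $\mathfrak{b} < M$ produces the main term with an error $O(M^{c}X^{N+1-\delta})$. The tail $|\lambda| \ge M$ is \emph{not} handled by a sieve on $f$ at all, but by a separate geometry-of-numbers upper bound from~\cite{sqfrval2} on reducible $\on{SL}_N(\Z)$-orbits with large $\lambda$-invariant, transported to $G_N(\Z)$-orbits via Proposition~\ref{prop-sasymp}; this gives $O(X^{N+1}/M) + O(X^{N+1-\delta})$. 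Taking $M$ to be a small power of $X$ balances the errors. For~\eqref{eq-midacceptcubic2GL2} the same stratification is used, with~\cite[Theorem~26]{MR3090184} supplying equidistribution of $\on{GL}_2(\Z)$-orbits and~\cite[Proposition~23]{MR2183288} the tail bound.
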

\begin{remark}
The techniques for counting orbits developed in the works of Bhargava et al.~work systematically for quite general representations of reductive groups. Theorem~\ref{thm-acceptcubic2} constitutes a rare example of a situation in which we can determine precise asymptotics for the integral orbits of the action of a non-reductive group (namely, $G_N$).
\end{remark}
\begin{proof}[Proof of Theorem~\ref{thm-acceptcubic2}]
We first prove~\eqref{eq-midacceptcubic2}, and then we explain how the proof of~\eqref{eq-midacceptcubic2GL2} differs. The proof is analogous to that of~\cite[Theorem 24]{cuspy}, but we include the details for the sake of completeness. Fix an integer $\mathfrak{b} \geq 1$, and factorize it into primes as $\mathfrak{b} = \prod_p p^{e_p}$. We start by proving an analogue of Theorem~\ref{thm-acceptcubic2} with $\mathfrak{S}$ replaced by the subfamily $\mathfrak{S}(\mathfrak{b}) \defeq \{w \in \mathfrak{S} : {\vert}\lambda(w){\vert} = \mathfrak{b}\}$; note that $\mathfrak{S}(\mathfrak{b})$ is itself a big family in $W_N^0(\Z)$, where $\mathfrak{S}(\mathfrak{b})_{p} = \{w \in \mathfrak{S}_p : {\vert}\lambda(w){\vert}_p = {\vert}\mathfrak{b}{\vert}_p\}$.

If $\mathfrak{S}(\mathfrak{b}) = \varnothing$, then there is nothing to prove, so assume that $\mathfrak{S}(\mathfrak{b}) \neq \varnothing$. For each prime $p \mid \mathfrak{b}$, we partition $U_N(\Z_p)$ as $U_N(\Z_p) = \bigsqcup_{j = 1}^{m_p} U_{p,j}$, where each $U_{p,j}$ is a level set for the function that sends $f \in U_N(\Z_p)$ to $\#(G_N(\Z_p) \backslash (\on{inv}^{-1}(f) \cap \mathfrak{S}(\mathfrak{b})_p))$. Write ``$E(m)$'' to mean ``a power of $m$ that depends only on $n$ and $\mathfrak{S}$.'' Then Proposition~\ref{lem-loco} implies that $U_N(\Z_p)$ can be covered by open sets, each of which is defined by congruence conditions modulo $E(p^{e_p})$, such that this orbit-counting function is constant on each open. It follows that $U_{p,j}$ is defined by congruence conditions modulo $E(p^{e_p})$. The quantity $\#(G_N(\Z_p) \backslash (\on{inv}^{-1}(f) \cap \mathfrak{S}(\mathfrak{b})_p))$ is independent of the choice of $f \in U_{p,j}$ (by the definition of a level set) and by Proposition~\ref{lem-loco}, 
this quantity is $\ll E(p^{e_p})$. 

Now for each prime $p \nmid \mathfrak{b}$, Proposition~\ref{prop-singleQ} tells us that $\#(G_N(\Z_p) \backslash (\on{inv}^{-1}(f) \cap \mathfrak{S}(\mathfrak{b})_p)) = 1$ for each $f \in \on{inv}(\mathfrak{S}(\mathfrak{b})_p)$. It then follows from Theorem~\ref{thm-stronglocglob} that the quantity
\begin{equation} \label{eq-thistheprod}
\#\left(\frac{\on{inv}^{-1}(f) \cap \mathfrak{S}(\mathfrak{b})}{G_N(\Z)}\right) = \prod_p \#\left(\frac{\on{inv}^{-1}(f) \cap \mathfrak{S}(\mathfrak{b})_p}{G_N(\Z_p)}\right) =\prod_{p \mid \mathfrak{b}} \#\left(\frac{\on{inv}^{-1}(f) \cap \mathfrak{S}(\mathfrak{b})_p}{G_N(\Z_p)}\right)
\end{equation}
is independent of the choice of $f \in \on{inv}(\mathfrak{S}(\mathfrak{b})) \cap \bigcap_p U_{p,j_p}$ for each tuple $(j_p)_{p \mid \mathfrak{b}} \in \prod_{p \mid \mathfrak{b}} \{1,\dots, m_p\}$. Therefore, we have
\begin{equation} \label{eq-fixedfquad}
\sum_{\substack{f \in U_N(\Z) \,\cap\, \bigcap_p U_{p,j_p} \\ \on{H}(f) < X }} \#\left(\frac{\on{inv}^{-1}(f) \cap \mathfrak{S}(\mathfrak{b})}{G_N(\Z)}\right) = \#\left(\frac{\on{inv}^{-1}(f^*) \cap \mathfrak{S}(\mathfrak{b})}{G_N(\Z)}\right) \times \sum_{\substack{f \in \on{inv}(\mathfrak{S}(\mathfrak{b})) \,\cap\, \bigcap_p U_{p,j_p} \\ \on{H}(f) < X }} 1
\end{equation}
where $f^* \in \on{inv}(\mathfrak{S}(\mathfrak{b})) \,\cap\, \bigcap_{p \mid \mathfrak{b}} U_{p,j_p}$ is any fixed element. Since $\mathfrak{S}$ is a big family, and since the aforementioned explicit section $\sigma_0$ has image contained in the locus of pairs $(A,B)$ with $\lambda(A,B) = \pm 1$, it follows that $\on{inv}(\mathfrak{S}(\mathfrak{b})_p) = U_N(\Z_p)$ for every $p \gg 1$ that does not divide $\mathfrak{b}$. 
As the set $\on{inv}(\mathfrak{S}(\mathfrak{b})) \,\cap\, \bigcap_p U_{p,j_p}$ is defined by congruence conditions modulo $E(\mathfrak{b})$, since $\on{inv}(\mathfrak{S}(\mathfrak{b})_p) \cap U_{p,j_p}$ is defined by congruence conditions modulo $E(p^{e_p})$ for each $p$, 
we obtain the following asymptotic:
\vspace*{0.1in}
\begin{equation} \label{eq-fixedfquad2}
\displaystyle \sum_{\substack{f \in \on{inv}(\mathfrak{S}(\mathfrak{b})) \,\cap\, \bigcap_p U_{p,j_p} \\ \on{H}(f) < X }} 1 = \displaystyle \on{N}_N^{(r)}(X) \times \prod_{p \mid \mathfrak{b}} \int_{f \in \on{inv}(\mathfrak{S}(\mathfrak{b})_p) \cap U_{p,j_p}} df \times \prod_{p \nmid \mathfrak{b}} \int_{f \in \on{inv}(\mathfrak{S}(\mathfrak{b})_p)} df + O\big(E(\mathfrak{b})X^{N+1 - \delta}\big)
\end{equation}
for some sufficiently small $\delta > 0$. Substituting the asymptotic~\eqref{eq-fixedfquad2} into the right-hand side of~\eqref{eq-fixedfquad}, applying~\eqref{eq-thistheprod} to the resulting expression, and summing that over tuples $(j_p)_{p \mid \mathfrak{b}} \in \prod_{p \mid \mathfrak{b}} \{1,\dots, m_p\}$ gives the following:
\begin{align}\label{eq-fixedconductoresult}
\displaystyle \sum_{\substack{f \in U_N(\Z)\\ \on{H}(f) < X }}  \#\left(\frac{\on{inv}^{-1}(f) \cap \mathfrak{S}(\mathfrak{b})}{G_N(\Z)}\right) & =  \displaystyle \on{N}_N^{(r)}(X) \times \prod_{p} \int_{f \in U_N(\Z_p)} \#\left(\frac{\on{inv}^{-1}(f) \cap \mathfrak{S}(\mathfrak{b})_p}{G_N(\Z_p)}\right) df \\
& \qquad\qquad\qquad\qquad\qquad\qquad\qquad +O\big(E(\mathfrak{b})X^{N+1 - \delta}\big). \nonumber
\end{align}

Next, we prove that the theorem holds with ``$=$'' replaced by ``$\geq$.'' For any real number $M > 1$, let $\mathfrak{S}[M] \defeq \{w \in \mathfrak{S} : {\vert}\lambda(w){\vert} < M\}$. Summing~\eqref{eq-fixedconductoresult} over $\mathfrak{b} < M$ gives:
\begin{align} \label{eq-conductorboundquad}
 \sum_{\substack{f \in U_N(\Z)\\ \on{H}(f) < X }} \displaystyle  \#\left(\frac{\on{inv}^{-1}(f) \cap \mathfrak{S}[M]}{G(\Z)}\right)  & = \on{N}_N^{(r)}(X) \times \sum_{\mathfrak{b} < M} \prod_{p} \int_{f \in U_N(\Z_p)} \#\left(\frac{\on{inv}^{-1}(f) \cap \mathfrak{S}(\mathfrak{b})_p}{G(\Z_p)}\right) df \\
 & \qquad\qquad\qquad\qquad\qquad\qquad\qquad +O\big(E(M)X^{N+1 - \delta}\big).\nonumber
\end{align}
Dividing through by $\on{N}_N^{(r)}(X)$, letting $X \to \infty$, and replacing $\mathfrak{S}$ with $\mathfrak{S}[M]$, it follows \mbox{from~\eqref{eq-conductorboundquad} that}
\begin{equation} \label{eq-conductorboundquad2}
\liminf_{X \to \infty} \frac{\sum_{\substack{f \in U_N(\Z) \\ \on{H}(f) < X }}  \displaystyle \#\left(\frac{\on{inv}^{-1}(f) \cap \mathfrak{S}}{G_N(\Z)}\right) }{\on{N}_N^{(r)}(X)} \geq \sum_{\mathfrak{b} < M}\prod_{p } \int_{f \in U_N(\Z_p)} \#\left(\frac{\on{inv}^{-1}(f) \cap \mathfrak{S}(\mathfrak{b})_p}{G_N(\Z_p)}\right)  df.
\end{equation}
Now, letting $M \to \infty$ on the right-hand side of~\eqref{eq-conductorboundquad2} and factoring the sum into an Euler product, we obtain the following:
\begin{align} 
     \sum_{\mathfrak{b} = 1}^\infty\prod_{p } \int_{f \in U_N(\Z_p)} \#\left(\frac{\on{inv}^{-1}(f) \cap \mathfrak{S}(\mathfrak{b})_p}{G_N(\Z_p)}\right)  df & = \prod_p \sum_{e = 0}^\infty \int_{f \in U_N(\Z_p)} \#\left(\frac{\on{inv}^{-1}(f) \cap \mathfrak{S}(p^e)_p}{G_N(\Z_p)}\right)  df \nonumber \\ & = \prod_p  \int_{f \in U_N(\Z_p)} \#\left(\frac{\on{inv}^{-1}(f) \cap \mathfrak{S}_{p}}{G_N(\Z_p)}\right)  df. \label{eq-interchange}
\end{align}
Combining~\eqref{eq-conductorboundquad2} with~\eqref{eq-interchange}, we find that Theorem~\ref{thm-acceptcubic2} holds with ``$=$'' replaced by ``$\geq$.''

It thus remains to prove the theorem with ``$=$'' replaced by ``$\leq$.'' Let $\mathfrak{S}[M]' \defeq \mathfrak{S} \smallsetminus \mathfrak{S}[M]$. Then for each $w \in \mathfrak{S}[M]'$, we have that ${\vert}\lambda(w){\vert} \geq M$. 
In~\cite{sqfrval2}, Bhargava, Shankar, and Wang determined bounds for the number of reducible $\on{SL}_N(\Z)$-orbits on $W_N(\Z)$ having large $\lambda$-invariant (for those orbits on which the notion of $\lambda$-invariant can be extended naturally). In particular, by~\cite[(14), (16), and Theorem~4.1]{sqfrval2} along with Proposition~\ref{prop-sasymp} (to follow), we can choose $\delta \in (0,1)$ so that
\begin{align}
& \sum_{\substack{f \in U_N(\Z) \\ \on{H}(f) < X }}  \#\left(\frac{\on{inv}^{-1}(f) \cap \mathfrak{S}[M]'}{G_N(\Z)}\right) =O_\varepsilon\big(X^{N+1+\varepsilon}/M\big) + O\big(X^{N+1 - \delta}\big). \label{eq-discboundquad}
\end{align}
On the other hand, it follows from~\eqref{eq-conductorboundquad} that
\begin{align} \label{eq-conductorboundquad3}
\displaystyle 
\sum_{\substack{f \in U_N(\Z) \\ \on{H}(f) < X }}  \#\left(\frac{\on{inv}^{-1}(f) \cap \mathfrak{S}[M]}{G_N(\Z)}\right) &\leq \displaystyle \on{N}_N^{(r)}(X) \times \prod_{p} \int_{f \in U_N(\Z_p)} \#\left(\frac{\on{inv}^{-1}(f) \cap \mathfrak{S}_{p}}{G_N(\Z_p)}\right) df \\
& \qquad\qquad\qquad\qquad\qquad\quad\,\, + O\big(E(M)X^{N+1 - \delta}\big). \nonumber
\end{align}
Taking $M$ to grow as a sufficiently small power of $X$ and combining~\eqref{eq-discboundquad} with~\eqref{eq-conductorboundquad3} yields~\eqref{eq-midacceptcubic2}.

As for the proof of~\eqref{eq-midacceptcubic2GL2}, the only steps that differ are the deductions of~\eqref{eq-fixedfquad2} and~\eqref{eq-discboundquad}. For~\eqref{eq-fixedfquad2}, one simply applies the asymptotics for counting $\on{GL}_2(\Z)$-orbits of binary cubic forms satisfying local conditions obtained by Bhargava, Shankar, and Tsimerman in~\cite[Theorem~26]{MR3090184}. For~\eqref{eq-discboundquad}, one simply applies the estimate proven by Bhargava in~\cite[Proposition~23]{MR2183288}.
\end{proof}
To deduce Theorem~\ref{thm-main2} from Theorem~\ref{thm-acceptcubic2}, we require the following result relating $G_N(\Z)$-orbits on $W_N^0(\Z)$ with reducible $\on{SL}_N(\Z)$-orbits on $W_N(\Z)$:

\begin{prop} \label{prop-sasymp}
Let $f \in U_N(\Z)$ be irreducible. The $G_N(\Z)$-orbits on $\on{inv}^{-1}(f) \cap W_N^0(\Z)$ are in bijection with the reducible $\on{SL}_N(\Z)$-orbits on $\on{inv}^{-1}(f) \cap W_N(\Z)$.


Analogously, let $\Delta \in \Z \smallsetminus \{0\}$. The $(\on{GL}_2 \times G_3)(\Z)$-orbits on $\on{disc}^{-1}(\Delta) \cap W_3^0(\Z)$ are in bijection with the reducible $(\on{GL}_2 \times \on{SL}_3)(\Z)$-orbits on $\on{disc}^{-1}(\Delta) \cap W_3(\Z)$.
\end{prop}
\begin{proof}
We first claim that if we have $(A_1, B_1), (A_2, B_2) \in \on{inv}^{-1}(f) \cap W_N^0(\Z)$ and $g_1 \in \on{SL}_N(\Z)$ such that $g_1 \cdot (A_1, B_1) = (A_2,B_2)$, then $g_1 \in G_N(\Z)$. Indeed, by Proposition~\ref{prop-ver2}, there exists $g_2 \in G_N(\Q)$ such that $g_2 \cdot (A_1, B_1) = (A_2,B_2)$, so $g_1^{-1}g_2 \in \on{SL}_N(\Q)$ stabilizes $(A_1, B_1)$. But by~\cite[Proposition~14]{Swpreprint}, the stabilizer in $\on{SL}_N(\Z)$ of any element of $\on{inv}^{-1}(f)$ is trivial, so $g_1 = g_2 \in \on{SL}_N(\Z) \cap G_N(\Q) = G_N(\Z)$, as claimed. 

It follows from the above claim that there are at least as many reducible $\on{SL}_N(\Z)$-orbits on $\on{inv}^{-1}(f) \cap W_N(\Z)$ as there are $G_N(\Z)$-orbits $\on{inv}^{-1}(f) \cap W_N^0(\Z)$. It therefore suffices to show that every reducible $\on{SL}_N(\Z)$-orbit on $W_N(\Z)$ meets the linear subspace $W_N^0(\Z)$, but this was proven in~\cite[\S3.4, bottom of p.~12]{sqfrval2}.

The proof of the analogous claim about $(\on{GL}_2 \times G_3)(\Z)$-orbits on $\on{disc}^{-1}(\Delta) \cap W_3^0(\Z)$ is essentially identical, with~\cite[Proposition~14]{Swpreprint} replaced by~\cite[\S2.1, p.~1039]{MR2183288}.
\end{proof}

 Theorem~\ref{thm-main2} now follows from Theorem~\ref{thm-acceptcubic2} and Proposition~\ref{prop-sasymp} by taking $\mathfrak{S} = S \cap W_N^0(\Z)$, where $S$ is a big family in $W_N(\Z)$.

\subsection{Change-of-variables formulas, and proof of Theorem~\ref{thm-main3}}

The purpose of this section is to deduce Theorem~\ref{thm-main3} from Theorem~\ref{thm-main2}. The objective is, for each prime $p$, to reexpress the integral over $U_N(\Z_p)$ that occurs in the asymptotic given by Theorem~\ref{thm-main2} in terms of an integral over $W_N^0(\Z_p)$. To do this, we first write the integral over $U_N(\Z_p)$ as an integral over $G_N(\Z_p) \times U_N(\Z_p)$; we then perform a change-of-variables to relate the natural measure on $G_N \times U_N$ with the natural measure on $W_N^0$ (note that $\dim G_N \times U_N = \dim G_N + \dim U_N = \dim W_N^0$). 

We also prove a second change-of-variables formula relating the natural measure on $W_N^{\on{top}}$ with the natural measure on $(\on{SL}_n \times \on{SL}_{n+1}) \times \mathbb{A}^1$, where $\mathbb{A}^1$ parametrizes the $Q$-invariant (note that $\dim \big((\on{SL}_n \times \on{SL}_{n+1}) \times \mathbb{A}^1\big) = \dim \on{SL}_n + \dim \on{SL}_{n+1} + \dim \mathbb{A}^1 = \dim W_N^{\on{top}}$). This second change-of-variables plays a crucial role in the proof of Theorem~\ref{thm-main1} (see \S\ref{sec-pfthmn1}, to follow).

\subsubsection{Explicit choices of volume forms} \label{sec-rightHaar}

We start by making explicit choices of volume forms on $U_N$, $W_N^0$, and $G_N$. Take $df$, $dw$, $dh_1$, and $dh_2$ to be generators of the $\Z$-modules of right-invariant volume forms on $U_N$, $W_N^0$, $H_1$, and $H_2$, respectively, all defined over $\Z$. On $G_N$ we take the measure $dg = dh_1dh_2$ (recall that $G_N$ is the Frobenius product of $H_1$ and $H_2$ by Lemma~\ref{lem-prodhgl2}).

We now give an explicit formula for $dg$ on an open subscheme of $G_N$. For a square matrix $M$, we denote the minor obtained by deleting the first row and column by $M_{(1,1)}$; if $M$ is $1$-dimensional, then we set $M_{(1,1)} \defeq 1$. Let $G_N^\circ$ be the open subscheme of $G_N$ whose $R$-points are given by matrices
$$g = \left[\begin{array}{c|c} g' & 0 \\ \hline g''' & g'' \end{array}\right] \in G_N(R)$$
such that $g'_{(1,1)} \in R^\times$ for any $\Z$-algebra $R$. Then we may realize $G_N^\circ$ as an open subscheme of the affine space
$$\mc{M} \defeq \on{Spec} \Z\left[\left\{M_{ij} : \begin{array}{c} 1 \leq i,j \leq n, \text{ where} \\ (i,j) \neq (1,1) \text{ and } j \leq n \text{ if } i \leq n  \end{array}\right\}\right]$$
via the map that sends a matrix $g \in G_N(R)$ to its list of matrix entries $g_{ij}$, excluding the entries $g_{ij}$ for any $(i,j)$ such that $i \leq n$ and $j \geq n+1$ or $(i,j) = (1,1)$.

Let $\prod dM_{ij}$ be the Haar measure on $\mc{M}$, normalized so that $\mc{M}(\Z)$ has covolume $1$ in $\mc{M}(\R)$. We denote by $\prod dg_{ij}$ the restriction of this measure to $G_N^\circ$ via the embedding $G_N^\circ \subset \mc{M}$ defined above, and by abuse of notation, we also denote by $dg$ the restriction to $G_N^\circ$ of the volume form on $G_N$. Then a calculation shows that the measure $(g'_{(1,1)})^{-1}(\det g'')^{n-1} \times \prod dg_{ij}$ is invariant under the right-action of $G_N$, and so on $G_N^\circ$ we have $dg = (g'_{(1,1)})^{-1}(\det g'')^{n-1} \times \prod dg_{ij}$, up to sign. As for the left-action of $G_N$, if we take
\begin{equation} \label{eq-defhsplit}
h = \left[\begin{array}{c|c} h' & 0 \\ \hline h''' & h'' \end{array}\right] \in G_N(R),
\end{equation}
then one readily verifies that
\begin{equation} \label{eq-defrhoh}
d(hg) = \rho(h)dg, \quad \text{where} \quad \rho(h) \defeq |\det h''|^{N}.
\end{equation}
Note that the formula~\eqref{eq-defrhoh} holds on all of $G_N$, not just on the open subscheme $G_N^\circ$.

We finish by making explicit choices of the volume forms on $\mathbb{A}^1$ and $\on{SL}_n \times \on{SL}_{n+1}$. Take $dq$ to be any volume form on $\mathbb{A}^1$ defined over $\Z$, and take $dh$ to be any volume form on $\on{SL}_n \times \on{SL}_{n+1}$ defined over $\Z$. The forms $dq$ and $dh$ are necessarily left- and right-invariant.

\subsubsection{Stating the change-of-variables formulas}

We are now in position to state our change-of-variables formula relating the measure $dw$ on $W_N^0$ with the measure $dgdf$ on $G_N \times U_N$.

\begin{prop} \label{prop-jac}
Let $R=\R$ or $\Z_p$ for a prime $p$. Let $\phi \colon W_N^0(R) \to \R$ be an integrable function. Then there exists a nonzero rational number $\mc{J} \in \Q^\times$, possibly depending on $N$, such that
\begin{equation*}
\frac{1}{|\mc{J}|}  \int_{w\in W_N^0(R)}\phi(w)|\lambda(w)|dw = \int_{\substack{f\in U_N(R)\\\Delta(f)\neq 0}} \sum_{[w]\in\frac{\inv^{-1}(f) \cap W_N^0(R)}{G_N(R)}} \int_{g \in G_N(R)}
\phi(g \cdot w)dgdf,
\end{equation*}
where $|-|$ denotes the usual absolute value on $R$.
\end{prop}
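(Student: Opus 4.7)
The plan is to set up an algebraic change-of-variables map $\Phi \colon G_N \times U_N \to W_N^0$, defined by $\Phi(g, f) = g \cdot \sigma_0(f)$ using the explicit polynomial section $\sigma_0$ from Section~\ref{sec-repert}, and to pull back the measure $|\lambda(w)|\,dw$ along $\Phi$. A dimension count gives $\dim G_N + \dim U_N = (n+1)(3n+2) = \dim W_N^0$, and by Proposition~\ref{prop-ver2} the map $\Phi$ identifies the $G_N(R)$-orbit of $\sigma_0(f)$ freely with $G_N(R)$ (trivial stabilizers). Over $R = \R$ or $\Z_p$ the fiber $\on{inv}^{-1}(f) \cap W_N^0(R)$ may split into finitely many $G_N(R)$-orbits, so I would set up the parametrization orbit by orbit, obtaining after summation the sum on the right-hand side of the claimed identity.

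The core computation is to understand how the two relevant measures transform under left-translation by $h \in G_N$. Writing $A = \bigl(\begin{smallmatrix} 0 & A_{12} \\ A_{12}^T & A_{22} \end{smallmatrix}\bigr)$ and similarly for $B$, and using the constraint $\det h' \cdot \det h'' = 1$, a direct block-matrix calculation shows that $L_h^*\,dw = |\det h''|^{2n+2}\,dw$ on $W_N^0$. Since $\lambda$ is $\on{SL}_n \times \on{SL}_{n+1}$-invariant by Proposition~\ref{prop-castle}, is invariant under the subgroup $H_1$ (which acts trivially on the top-right blocks $A_{12}, B_{12}$), and has bidegree $(n+1, n)$ under $(\det h', \det h'')$ on $W_N^{\on{top}}$, one obtains $\lambda(hw) = (\det h'')^{-1}\lambda(w)$. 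Combining these gives $L_h^*\bigl(|\lambda|\,dw\bigr) = |\det h''|^{2n+1}\,|\lambda|\,dw = \rho(h) \cdot |\lambda|\,dw$, which matches $L_h^*\,dg = \rho(h)\,dg$ from~\eqref{eq-defrhoh}. Writing $\Phi^*(|\lambda(w)|\,dw) = F(g, f)\,dg\,df$, the $G_N$-equivariance of $\Phi$ then forces $F(hg, f) = F(g, f)$, so $F$ depends only on $f$; call it $\mc{J}(f)$. Using $\lambda(\sigma_0(f)) = 1$, the value $\mc{J}(f)$ coincides with the ordinary algebraic Jacobian of $\Phi$ at $(e, f)$, computed with respect to the chosen $\Z$-bases of top forms.

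The main obstacle, as I see it, is to show that $\mc{J}(f)$ is actually constant in $f$, yielding a single rational number $\mc{J} \in \Q^\times$ depending only on $N$. I would approach this via a weight computation under the $\Gm$-action in which $c \in \Gm$ scales $w \in W_N^0$ by $c$ and $f \in U_N$ by $c^N$ (compatibly with $\on{inv}(cA, cB) = c^N\on{inv}(A, B)$). Tracking the weights of $dw$, $df$, $|\lambda|$, and $\sigma_0$ under this action, one concludes that $\mc{J}(f)$, a polynomial in the coordinates of $f$ defined over $\Q$, is forced to have weight zero and is therefore constant; alternatively, $\mc{J}(f)$ can be evaluated at a single convenient basepoint using the explicit form of $\sigma_0$ from~\cite[\S4.1]{MR3782066}. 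Once the constancy is established, Fubini's theorem converts $\int_{W_N^0(R)} \phi(w)|\lambda(w)|\,dw$ into $|\mc{J}| \cdot \on{Vol}(G_N(R)) \cdot \int_{U_N(R)} \sum_w \phi(w)\,df$, by partitioning $\on{inv}^{-1}(f) \cap W_N^0(R)$ into $G_N(R)$-orbits, integrating over each orbit via the free action, and summing the contributions, which yields the proposition.
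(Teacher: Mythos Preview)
Your proposal is essentially correct and follows the same strategy as the paper: both set up the map $\Phi(g,f)=g\cdot\sigma_0(f)$, compute that the left-translation character of $|\lambda(w)|\,dw$ equals the modular character $\rho(h)=|\det h''|^{N}$ of $dg$, and conclude the Jacobian is independent of $g$. The paper then cites~\cite[Proposition~3.10, Steps~2--4]{MR3272925} for independence from the section and from $f$, and finally passes from $\R$ to $\Z_p$ by the principle of permanence of identities; you instead work orbit-by-orbit over $R$ and propose a $\Gm$-weight argument for constancy in $f$. Two small cautions: your weight argument implicitly needs either that $\sigma_0$ scales compatibly with the $\Gm$-action or that $\mc{J}(f)$ is section-independent (so you can replace $\sigma_0$ by a homogeneous section), which is exactly the content of the paper's Step~2 and uses right-invariance of $dg$; and ``evaluating at a basepoint'' is not an alternative proof of constancy but only computes the value once constancy is known.
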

\begin{proof}
We follow the general strategy used in the proof of~\cite[Proposition~14]{cuspy}. Take $R = \R$, and let $\mc{U}\subset U_N(\R)$ be an open set and let $\sigma\colon \mc{U}\to W_N^0(\R)$ be a continuous section of $\inv$ (note that such a section exists, namely the aforementioned explicit section $\sigma_0$). We first claim that for some $\mc{J} \in \Q^\times$ we have
\begin{equation}\label{eq-jac1}
\int_{w \in G_N(\R) \cdot \sigma(\mc{U})} \phi(w)|\lambda(w)| dw  = |\mc{J}|\int_{f \in \mc{U}} \int_{g\in G_N(\R)} \phi(g \cdot \sigma(f)) dgdf.
\end{equation}
By the Stone--Weierstrass theorem, it suffices to treat the case where $\sigma$ is piecewise analytic. In that case, we have
\begin{equation*}
\int_{w \in G_N(\R) \cdot \sigma(\mc{U})} \phi(w)|\lambda(w)| dw  = \int_{f \in \mc{U}} \int_{g\in G_N(\R)}|\mc{J_\sigma}(g,f)| \phi(g\cdot \sigma(f))dgdf,
\end{equation*}
where $\mc{J}_{\sigma}(g,f)$ is the determinant of the Jacobian matrix coming from the change-of-variables taking the measure $\lambda(w)dw$ on $W_N^0$ to the product measure $dgdf$ on $G_N \times U_N$. 

We now show that
$\mc{J}_{\sigma}(g,f)$ is independent of $g$. Take $h \in G_N(\R)$, and consider the transformation on $W_N^0(\R)$ that sends $w \mapsto h \cdot w$. Then there exists a function $\chi_\lambda \colon G_N(\R) \to \R_{> 0}$ such that $\lambda(h \cdot w)d(h\cdot w) = \chi_\lambda(h) \lambda(w) dw$; indeed, one checks that if $h$ is expressed as in~\eqref{eq-defhsplit}, we have $\lambda(h \cdot w) = |\det h''|^{-1} Q(w)$ and $d(h \cdot w) = |\det h''|^{N+1} \cdot dw$, so $\chi_\lambda(h)= |\det h''|^N$. 
On the other hand, the transformation $w \mapsto h \cdot w$ acts on $G_N(\R) \times \mc{U}$ by sending $(g,f) \mapsto (h g,f)$. Letting $\rho \colon G_N(\R) \to \R_{>0}$ be as in~\eqref{eq-defrhoh}, we have that
\begin{equation*}
\mc{J}_{\sigma}(h  g,f) d(h  g) df = \rho(h)\mc{J}_{\sigma}(h  g,f) dgdf.
\end{equation*}
But we also have that
\begin{equation*}
\mc{J}_\sigma(h g,f)d(h g)df=\lambda(h \cdot w)d(h \cdot w)=\chi_\lambda(h)\lambda(w)dw=\chi_\lambda(h)\mc{J}_\sigma(g,f)dgdf.
\end{equation*}
Upon comparing the above two displayed equations, and using the fact that $\rho(h)= |\det h''|^{N} = \chi_\lambda(h)$, we deduce that the function $\mc{J}_\sigma(g,f)$ is independent of $g$.

That $\mc{J}_{\sigma}(g,f)$ is independent of $\sigma$ follows from an argument identical to Step 2 in the proof of~\cite[Proposition 3.10]{MR3272925} (this step requires that the measure $dg$ be right-invariant). Thus, we can take $\sigma$ to be the polynomial section $\sigma_0$. 
With this choice of section, that $\mc{J}_{\sigma_0}(g,f)$ is independent of $f$ and equal to a nonzero rational constant follows from an argument identical to Steps 3 and 4 in the proof of~\cite[Proposition 3.10]{MR3272925}.

We have thus proven \eqref{eq-jac1}. Proposition \ref{prop-jac} --- including the case where $R = \Z_p$ for a prime $p$ --- now follows from \eqref{eq-jac1} and the principle of permanence of identities, just as \cite[Proposition 3.7]{MR3272925}
is deduced from \cite[Proposition 3.10]{MR3272925}.
\end{proof}

We shall also require the following change-of-variables formula relating the pushforward of the measure $dw$ from $W_N^0$ to $W_N^{\on{top}}$ with the product of the Haar measure on $\on{SL}_n \times \on{SL}_{n+1}$ and the measure $dq$ on $\mathbb{A}^1$. The proof is analogous to that of Proposition~\ref{prop-jac}, so we omit it.

\begin{prop} \label{prop-jac2}
Let $p$ be prime. Let $\phi \colon W_N^{\on{top}}(\Z_p) \to \R$ be an integrable function, and extend $\phi$ to a function on $W_N^0$ by precomposing with the natural projection map $W_N^0 \to W_N^{\on{top}}$. Then there exists a nonzero rational number $\mc{J}' \in \Q^\times$, possibly depending on $N$, such that
\begin{equation*}
\int_{w\in W_N^0(\Z_p)} \frac{\phi(w)}{|\lambda(w)|}dw=
|\mc{J}'|_p\times \int_{\substack{q\in \Z_p \smallsetminus \{ 0\}}}\sum_{[w] \in\frac{Q^{-1}(q) \cap W_N^{\on{top}}(\Z_p)}{(\on{SL}_n \times \on{SL}_{n+1})(\Z_p)}}\int_{h \in (\on{SL}_n \times \on{SL}_{n+1})(\Z_p)}
\phi(h \cdot w)dhdq.
\end{equation*}
\end{prop}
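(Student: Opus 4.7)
The plan is to follow the template of the proof of Proposition~\ref{prop-jac}, with two main adaptations. The first is a preliminary Fubini step to reduce from an integral over $W_N^0(R)$ to one over $W_N^{\on{top}}(R)$: both $\phi(w)$ and $Q(w)$ depend only on the image of $w$ under the linear projection $\pi \colon W_N^0 \to W_N^{\on{top}}$, whose fibers are affine spaces of dimension $(n+1)(n+2)$ parametrizing the bottom-right symmetric blocks. Applying Fubini to $\pi$, each fiber contributes the same constant measure over $R = \Z_p$ (or formally over $R = \R$ in the sense of the permanence of identities), so the assertion reduces to the analogous identity
$$\int_{w' \in W_N^{\on{top}}(R)} \frac{\phi(w')}{|Q(w')|}\, dw' = |\mathcal{J}'|\,\on{Vol}\big((\on{SL}_n \times \on{SL}_{n+1})(R)\big) \int_{q\in R \smallsetminus\{0\}} \Big(\sum_{w\in \frac{Q^{-1}(q) \cap W_N^{\on{top}}(R)}{(\on{SL}_n \times \on{SL}_{n+1})(R)}} \phi(w)\Big) dq,$$
with the fiber-volume constant absorbed into $\mathcal{J}'$.

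For this reduced identity, I would take a piecewise analytic section $\sigma$ of $Q$ --- for instance, the section~\eqref{eq-qsech}, which is defined over $\Z$ and linear in $q$ --- and use Proposition~\ref{prop-castle} to identify the map $(g,q)\mapsto g\cdot\sigma(q)$ as a bijection from $(\on{SL}_n \times \on{SL}_{n+1})(R) \times \mathcal{U}$ onto $(\on{SL}_n \times \on{SL}_{n+1})(R) \cdot \sigma(\mathcal{U})$ for any open $\mathcal{U} \subset R^\times$ on which $\sigma$ is analytic. Introduce $\mathcal{J}_\sigma(g,q)$ so that $dw' = |\mathcal{J}_\sigma(g,q)|\,dg\,dq$. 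Exactly as in Proposition~\ref{prop-jac}, the $(\on{SL}_n \times \on{SL}_{n+1})$-invariance of both $dw'$ and $Q$, combined with the left-invariance of Haar measure $dg$, forces $\mathcal{J}_\sigma(g,q)$ to be independent of $g$; and the Stone--Weierstrass argument of Step~2 in the proof of \cite[Proposition~3.10]{MR3272925} shows that $\mathcal{J}_\sigma$ is independent of $\sigma$ up to a nonzero factor in $\Q^\times$, so we may take $\sigma = \sigma_0$ to be the polynomial section defined over $\Z$.

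The main obstacle is the analogue of Steps~3--4 of~\cite[Proposition~3.10]{MR3272925} --- pinning down how $\mathcal{J}_{\sigma_0}$ depends on $q$. Unlike in Proposition~\ref{prop-jac}, where the factor $|Q(w)|$ on the left-hand side cancels the weight of $G_N$ on $W_N^0$ to yield a $q$-independent rational Jacobian, here the factor $1/|Q(w)|$ must cancel against a $|q|$ arising in the Jacobian. To verify this, I would extend the action to $\on{GL}_n \times \on{GL}_{n+1}$ and track the scaling of $dw'$, $Q$, and $dq$ under a $\Gm$-family of diagonal transformations (sending $q \mapsto \alpha q$ and rescaling $dw'$ by a determined power of $\alpha$, while leaving $dg$ invariant on $\on{SL}_n \times \on{SL}_{n+1}$); matching weights on both sides of the change-of-variables formula forces $\mathcal{J}_{\sigma_0}(q) = \mathcal{J}'\cdot q$ for some $\mathcal{J}' \in \Q^\times$. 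With this in hand, the $|q|$ from the Jacobian cancels the $1/|Q(w)|$ in the integrand; $\phi$ being constant on $(\on{SL}_n\times\on{SL}_{n+1})$-orbits allows the $dg$-integration to contribute the factor $\on{Vol}((\on{SL}_n \times \on{SL}_{n+1})(R))$; and the extension from $(\on{SL}_n \times \on{SL}_{n+1})(R) \cdot \sigma(\mathcal{U})$ to all of $W_N^{\on{top}}(R)$ (and hence to all of $W_N^0(R)$) follows from the principle of permanence of identities, exactly as in the last line of the proof of Proposition~\ref{prop-jac}.
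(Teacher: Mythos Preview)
Your proposal is correct and matches the paper's intended approach: the paper itself omits the proof, stating only that it is ``analogous to that of Proposition~\ref{prop-jac},'' and your sketch is precisely such an adaptation, with the natural Fubini reduction from $W_N^0$ to $W_N^{\on{top}}$ as a preliminary step. Two minor remarks: (i) the independence of $\mathcal{J}_\sigma$ from $\sigma$ in Step~2 of~\cite[Proposition~3.10]{MR3272925} is exact, not merely up to a rational factor (and it is a right-invariance argument, not Stone--Weierstrass, which is used earlier to reduce to analytic sections); (ii) the scaling argument pinning down $\mathcal{J}_{\sigma_0}(q) \propto q$ is correct but can also be seen directly, since the section~\eqref{eq-qsech} is affine-linear in $q$, making the Jacobian matrix at $(e,q)$ have entries of degree at most one in $q$, and a direct computation (most transparently in the castling base case $n=1$, where the map is $(g,q)\mapsto \sigma_0(q)g^T$ on $\on{Mat}_{2\times 2}$) confirms the Jacobian is $\pm q$.
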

We now compute the constants $|\mc{J}|$ and $|\mc{J}'|$ and show, in particular, that they do not actually depend on $N$:
\begin{lemma} \label{lem-ksiolajidebt}
We have that $|\mc{J}| = |\mc{J}'|$ = 1.
\end{lemma}
\begin{proof}
We first prove that $|\mc{J}/\mc{J}'| = 1$. By Corollary~\ref{cor-oneandonly}, there is exactly one $G_N(\Z)$-orbit on $W_N^0(\Z)$ with unit $Q$-invariant lying above a binary form $f \in U_N(\Z)$. In particular, the average number of $G_N(\Z)$-orbits on $W_N^0(\Z)$ with unit $Q$-invariant is equal to $1$. On the other hand, combining Theorem~\ref{thm-acceptcubic2} with Propositions~\ref{prop-jac} and~\ref{prop-jac2} and Lemma~\ref{lem-gvol} and using the formula~\eqref{eq-vollkwhatsit}, we find that the average number of $G_N(\Z)$-orbits on $W_N^0(\Z)$ with unit $Q$-invariant is equal to
\begin{equation*}
|\mc{J}| \times \prod_p \frac{1}{\on{Vol}(G_N(\Z_p))}\int_{w \in \mc{L}_{\vec{0},\vec{0}}(p)} dw = \left|\frac{\mc{J}}{\mc{J}'}\right|.
\end{equation*}
Next, to compute ${\vert}\mc{J}'{\vert}$, it suffices to compute ${\vert}\cJ'{\vert}_p$ for each $p$, because $\mc{J}' \in \Q^\times$. To do this, we construct convenient sets in $W_N^{\on{top}}(\Z_p)$ and compute their volumes in two different ways: first, using Proposition \ref{prop-jac2}, and second, by means of a point count over $\F_p$. Equating the results of the two volume computations yields then \mbox{the value of ${\vert}\mc{J}'{\vert}_p$.}

To this end, fix $\ol{q} \in \mathbb{F}_p^\times$, and let $\phi_p \colon W_N^{\on{top}}(\Z_p) \to \R$ be the indicator function of the set
$$\Sigma \defeq \left\{(A,B) \in W_N^{\on{top}}(\Z_p) : Q(A,B) \equiv \ol{q}\,(\on{mod}p)\right\}.$$
By Propositions~\ref{prop-castle} and~\ref{prop-padicfund}, the group $(\on{SL}_n \times \on{SL}_{n+1})(\Z_p)$ acts simply transitively on the set of elements in $\Sigma$ having any fixed $Q$-invariant. Hence, from Proposition \ref{prop-jac2}, we obtain
\begin{align} \label{eq-prop28right}
\Vol(\Sigma)=
    & {\vert}\mc{J}'{\vert}_p \times \on{Vol}\big(\on{SL}_n \times \on{SL}_{n+1})(\Z_p)\big) \int_{\substack{q \in \Z_p \\ q\equiv \ol{q}\,(\on{mod}p)}}  dq
   \\
   &  ={\vert}\mc{J}'{\vert}_p \times \on{Vol}\big((\on{SL}_n \times \on{SL}_{n+1})(\Z_p)) \times p^{-1}. \nonumber
\end{align}
On the other hand, Proposition~\ref{prop-castle} implies that the group $(\on{SL}_n \times \on{SL}_{n+1})(\F_p)$ acts simply transitively on the mod-$p$ reduction $\ol{\Sigma}$ of $\Sigma$. Thus, we have
\begin{equation} \label{eq-sigmafsize}
\#\ol{\Sigma} = \#(\on{SL}_n \times \on{SL}_{n+1})(\mathbb{F}_p).
\end{equation}
Since $\Vol(\Sigma)=p^{-\dim W_N^{\on{top}}}\times \#\ol{\Sigma}$, $\Vol((\on{SL}_n \times \on{SL}_{n+1})(\Z_p))=p^{-\dim(\on{SL}_n \times \on{SL}_{n+1})} \times \#(\on{SL}_n \times \on{SL}_{n+1})(\mathbb{F}_p)$, and $1+\dim \on{SL}_n \times \on{SL}_{n+1} =\dim W_N^{\on{top}}$, it follows from \eqref{eq-prop28right} and \eqref{eq-sigmafsize} that ${\vert}\mc{J}'{\vert}_p=1$ for all $p$.
\end{proof}

We now turn our attention to the proof of Theorem~\ref{thm-main3}. For this, let $S$ be a big family in $W_N(\Z)$. An application of Proposition~\ref{prop-jac} with $R = \Z_p$ and with $\phi$ equal to the indicator function of $S_p \cap W_N^0(\Z_p)$, along with Lemma~\ref{lem-ksiolajidebt}, yields that
\begin{equation} \label{eq-theswapp}
  \int_{f \in U_N(\Z_p)} \#\left(\frac{\on{inv}^{-1}(f) \cap S_{p} \cap W_N^0(\Z_p)}{G_N(\Z_p)}\right)df = \frac{1}{\on{Vol}(G_N(\Z_p))}\int_{w \in S_p \cap W_N^0(\Z_p)} |\lambda(w)|_pdw.
\end{equation}
In the next lemma, we determine $\on{Vol}(G_N(\Z_p))$:
\begin{lemma} \label{lem-gvol}
We have that $\on{Vol}(G_N(\Z_p)) = \xi_{p,n}^{-1} = (1-p^{-1})(1 - p^{-n-1}) \times \prod_{i = 2}^{n} (1 - p^{-i})^2$, and also that $\on{Vol}\big((\on{SL}_n \times \on{SL}_{n+1})(\Z_p)\big) = (1 - p^{-n-1}) \times \prod_{i = 2}^n (1 - p^{-i})^2$.
\end{lemma}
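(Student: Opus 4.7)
The plan is to use the Frobenius decomposition $G_N = H_1 H_2$ of Lemma~\ref{lem-prodhgl2} to split the computation of $\on{Vol}(G_N(\Z_p))$ into pieces that can each be handled by a standard calculation, and then to use the analogous (but simpler) classical volume formulas for $\on{SL}_m(\Z_p)$ to handle $\on{Vol}((\on{SL}_n \times \on{SL}_{n+1})(\Z_p))$. Since the measure on $G_N$ is defined as $dg = dh_1\,dh_2$ and the factorization $g = h_1h_2$ is unique, Fubini immediately yields
\begin{equation*}
\on{Vol}(G_N(\Z_p)) = \on{Vol}(H_1(\Z_p)) \cdot \on{Vol}(H_2(\Z_p)).
\end{equation*}

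First I would treat $H_1$. Inspecting the definition, the matrix entries of an element of $H_1$ that are not prescribed to be $0$ or $1$ are exactly the strictly-sub-diagonal entries of the upper-left $n \times n$ block together with all entries of the lower-left $(n+1) \times n$ block, and these are free. Hence $H_1 \cong \mathbb{A}_{\Z}^{d}$ as schemes, and the generator $dh_1$ of top-degree differentials over $\Z$ agrees (up to sign) with the Euclidean measure on these entries; thus $\on{Vol}(H_1(\Z_p)) = 1$.

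Next, for $H_2$ I would exploit the short exact sequence
\begin{equation*}
1 \longrightarrow H_2 \longrightarrow \on{GL}_n \times \on{GL}_{n+1} \xrightarrow{(g',g'') \mapsto \det g' \det g''} \on{GL}_1 \longrightarrow 1
\end{equation*}
and the compatibility of Haar measures across this sequence to obtain
\begin{equation*}
\on{Vol}(H_2(\Z_p)) \;=\; \frac{\on{Vol}(\on{GL}_n(\Z_p)) \cdot \on{Vol}(\on{GL}_{n+1}(\Z_p))}{\on{Vol}(\on{GL}_1(\Z_p))}.
\end{equation*}
Plugging in the standard formula $\on{Vol}(\on{GL}_m(\Z_p)) = \prod_{i=1}^m (1 - p^{-i})$ (which follows from counting invertible $m \times m$ matrices over $\F_p$) gives, after a short algebraic simplification, the claimed value $(1-p^{-1})(1-p^{-n-1}) \prod_{i=2}^n (1-p^{-i})^2$. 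Running the analogous argument without the determinant-one constraint---or equivalently, using $\on{Vol}(\on{SL}_m(\Z_p)) = \prod_{i=2}^m (1 - p^{-i})$ directly---yields the stated formula for $\on{Vol}((\on{SL}_n \times \on{SL}_{n+1})(\Z_p))$.

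The main place where care is needed is in verifying that the $\Z$-defined generators $dh_1$, $dh_2$ of top-degree differentials, together with the resulting measure $dg = dh_1 dh_2$ on $G_N$, are compatible (up to $p$-adic units) with the Haar-measure normalizations implicit in the classical formula $\on{Vol}(\on{GL}_m(\Z_p)) = \prod_{i=1}^m (1-p^{-i})$. Because all the groups involved are smooth over $\Z$ and the explicit formula $dg = (g'_{(1,1)})^{-1}(\det g'')^{n-1} \prod dg_{ij}$ on $G_N^\circ$ was already recorded in \S\ref{sec-rightHaar}, this compatibility reduces to a direct calculation comparing the differential forms on $H_2$ with the standard invariant form on $\on{GL}_n \times \on{GL}_{n+1}$---routine, but the only genuinely non-trivial bookkeeping in the argument.
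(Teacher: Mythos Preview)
Your approach is correct and follows essentially the same plan as the paper: decompose $G_N = H_1 H_2$ via Lemma~\ref{lem-prodhgl2}, treat $H_1$ as an affine space, and handle $H_2$ through its relationship to $\on{GL}_n \times \on{GL}_{n+1}$.

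The one presentational difference worth noting is that the paper bypasses the measure-compatibility bookkeeping you flag at the end by invoking smoothness: since $G_N$ is smooth over $\Z$, one has $\on{Vol}(G_N(\Z_p)) = \#G_N(\F_p)/p^{\dim G_N}$, and the whole computation reduces to counting $\F_p$-points. The Frobenius decomposition is then applied at the level of $\F_p$-points (where it is just multiplicativity of cardinalities), and $\#H_2(\F_p)$ is computed as $\#\on{SL}_n(\F_p)\cdot\#\on{GL}_{n+1}(\F_p)$ by the obvious fibration. This sidesteps entirely the comparison of differential forms on $H_2$ with those on $\on{GL}_n \times \on{GL}_{n+1}$ that you identify as the delicate step. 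Your Fubini argument is perfectly valid, but the smoothness shortcut is what makes the paper's proof a two-line calculation.
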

\begin{proof}
We prove only the claimed formula for $\on{Vol}(G_N(\Z_p))$, as the other formula can be proven similarly. Since $G_N$ is smooth over $\Z$, we have that $\on{Vol}(G_N(\Z_p)) = \#G_N(\mathbb{F}_p)/p^{\dim G_N}$. It is easy to see that $\dim G_N = 3n^2+3n$, and by Lemma~\ref{lem-prodhgl2}, we have that 
\begin{align*}
\#G_N(\mathbb{F}_p) & = \#H_1(\mathbb{F}_p) \times \#H_2(\mathbb{F}_p) = \#\mathbb{A}^{n^2+n}(\mathbb{F}_p) \times \big(\#\on{SL}_n(\F_p) \times \#\on{GL}_{n+1}(\F_p)\big) \\
& = p^{n^2+n} \times (p-1)^{-1} \times \prod_{i = 0}^{n-1} (p^n - p^i) \times \prod_{i = 0}^{n} (p^{n+1} - p^i). \qedhere
\end{align*}
\end{proof}
Finally, substituting the formula for $\on{Vol}(G_N(\Z_p))$ given by Lemma~\ref{lem-gvol} into~\eqref{eq-theswapp} and combining the result with Theorem~\ref{thm-main2} yields Theorem~\ref{thm-main3}.

\section{Evaluation of local volumes for applications}

In this section, we evaluate the local integrals in Theorem~\ref{thm-main3} in two cases: (1) where $S_p = W_N(\Z_p)$ for each prime $p$; and (2) where $S_p$ is the set of projective elements in $W_3(\Z_p)$ for each prime $p$. As a consequence of (1), we deduce Theorem~\ref{thm-main1}, and as a consequence of (2), we deduce Theorems~\ref{thm-z2z3} and~\ref{thm-z2z32}.

\subsection{Proof of Theorem~\ref{thm-main1}} \label{sec-pfthmn1}

To deduce Theorem~\ref{thm-main1} from Theorem~\ref{thm-main3}, we must take $S = W_N(\Z)$ and evaluate the integral over $\Z_p$ for each prime $p$. We do this as follows:

\begin{prop} \label{prop-specialvol}
  Fix a prime $p$. Then we have that
  $$\frac{1}{\on{Vol}(G_N(\Z_p))}\int_{w \in W_N^0(\Z_p)} |\lambda(w)|_pdw =  \prod_{i = 2}^N \frac{1}{1 - p^{-i}}.$$
\end{prop}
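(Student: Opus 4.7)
The plan is to reduce the integral over $W_N^0(\Z_p)$ to an explicit sum over multi-indices by combining Proposition~\ref{prop-jac2} with the fundamental domain of Proposition~\ref{prop-padicfund}. First, because $\lambda$ factors through the projection $\pi \colon W_N^0 \to W_N^{\on{top}}$ and the fibers of $\pi$ have unit $p$-adic volume under the normalized Euclidean measures, Fubini yields
$$\int_{W_N^0(\Z_p)} |\lambda(w)|_p\,dw \;=\; \int_{W_N^{\on{top}}(\Z_p)} |Q(w')|_p\,dw'.$$

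Next, I would apply Proposition~\ref{prop-jac2} with $\phi = |\lambda|_p^2$ --- which is $(\on{SL}_n \times \on{SL}_{n+1})(\Z_p)$-invariant on $W_N^{\on{top}}$ --- to rewrite the above as
$$|\mc{J}'|_p \cdot \on{Vol}\big((\on{SL}_n \times \on{SL}_{n+1})(\Z_p)\big) \cdot \int_{q \in \Z_p \smallsetminus \{0\}} |q|_p^2 \cdot N(q)\,dq,$$
where $N(q) \defeq \#\big(Q^{-1}(q) \cap W_N^{\on{top}}(\Z_p) \,/\, (\on{SL}_n \times \on{SL}_{n+1})(\Z_p)\big)$ is the relevant orbit count.

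To evaluate $\int |q|_p^2 N(q)\,dq$, I would use Proposition~\ref{prop-padicfund} to parametrize the orbits by the fundamental domain $F = \bigsqcup_{\vec{a},\vec{b} \in \mathbb{N}^n} F_{\vec{a},\vec{b}} \subset W_N^{\on{top},0}(\Z_p)$. Writing $N(q) = \#(Q^{-1}(q) \cap F)$ and applying the $p$-adic coarea formula converts the integral into
$$\sum_{\vec{a},\vec{b} \in \mathbb{N}^n} \int_{F_{\vec{a},\vec{b}}} |Q(w)|_p^2\, |J_Q(w)|_p^{-1}\,dw,$$
where $J_Q$ is the Jacobian of $Q \colon F \to \mathbb{A}^1$. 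On $F_{\vec{a},\vec{b}}$ the invariant $Q$ has valuation $e(\vec{a},\vec{b}) = \sum_{i=1}^n\big((n+1-i)a_i + ib_i\big)$, and the ``top anti-diagonal'' entries lie in $p^{a_i}\Z_p^\times$ or $p^{b_i}\Z_p^\times$ while the ``sub-anti-diagonal'' entries are bounded by valuations at most $a_i$ or $b_i$. The integrand thus factors multiplicatively in these coordinates, and summing over $\vec{a},\vec{b}$ decomposes the result into a product of $2n$ independent geometric series, each evaluating to a factor $(1 - p^{-k})^{-1}$ for an appropriate $k \in \{2,\ldots,N\}$.

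Combining these factors with the formulas for $\on{Vol}((\on{SL}_n \times \on{SL}_{n+1})(\Z_p))$ and $\on{Vol}(G_N(\Z_p))$ from Lemma~\ref{lem-gvol}, the common terms $(1-p^{-i})$ cancel and leave $|\mc{J}'|_p \cdot \prod_{i=2}^N (1-p^{-i})^{-1}$. As a sanity check, the base case $n = 1$ gives $\int_{\Z_p^4}|a_1b_2-a_2b_1|_p\,dw = (1-p^{-1})/(1-p^{-3})$ by a direct computation, which together with $\on{Vol}(G_3(\Z_p)) = (1-p^{-1})(1-p^{-2})$ matches the claimed formula and forces $|\mc{J}'|_p = 1$. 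The main obstacle will be the explicit combinatorial execution of the sum in the third step --- in particular, verifying that the Jacobian $|J_Q(w)|_p^{-1}$ at a generic point of $F_{\vec{a},\vec{b}}$ has a clean multiplicative form in the scaling factors $p^{a_i}$ and $p^{b_i}$, which is what is needed for the sum to factor into $2n$ independent geometric series. Since $Q$ is a hyperdeterminant expressible as an alternating sum of products of anti-diagonal entries, this multiplicativity is highly plausible, but confirming it in full generality (for arbitrary odd $N$) is where the bulk of the work is likely to lie.
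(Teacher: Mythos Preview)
Your overall strategy---apply Proposition~\ref{prop-jac2} to convert $\int_{W_N^0(\Z_p)}|\lambda|_p\,dw$ into an integral $\int_{\Z_p}|q|_p^2\,N(q)\,dq$, then use the fundamental domain of Proposition~\ref{prop-padicfund} to evaluate $N(q)$, then sum over $\vec a,\vec b$ and factor---is exactly the paper's approach. (The initial Fubini step is harmless but redundant: Proposition~\ref{prop-jac2} is already stated as an integral over $W_N^0$, with $\phi$ pulled back from $W_N^{\on{top}}$.)

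Where you diverge is in step~3, and this is also where you locate ``the main obstacle.'' You propose to compute $\int |q|_p^2\,N(q)\,dq$ by a $p$-adic coarea formula for $Q\colon F\to\Z_p$, bringing in a Jacobian $|J_Q(w)|_p^{-1}$ whose multiplicative structure you would then have to verify. This detour is unnecessary, and as written it is also not quite well-posed: $N(q)$ is a \emph{point count} (one representative per orbit), not a fiber integral, so there is no coarea identity converting $\int_q |q|_p^2\,N(q)\,dq$ into $\int_F |Q|_p^2\,|J_Q|_p^{-1}\,dw$. The paper avoids this entirely: Proposition~\ref{prop-padicfund} gives, for each $(\vec a,\vec b)$, the exact number of orbits in $\mc L_{\vec a,\vec b}(p)$ with $Q$-invariant of valuation $e(\vec a,\vec b)=\sum_i\big((n{+}1{-}i)a_i+ib_i\big)$, namely
\[
\#\!\left(\frac{Q^{-1}(q)\cap\mc L_{\vec a,\vec b}(p)}{(\on{SL}_n\times\on{SL}_{n+1})(\Z_p)}\right)=\prod_{i=1}^n p^{(n-i)a_i+ib_i}.
\]
Plugging this into $\int |q|_p^2\,N(q)\,dq$ and summing over $\vec a,\vec b$ gives immediately
\[
(1-p^{-1})\sum_{\vec a,\vec b}\prod_{i=1}^n p^{-(2n+3-2i)a_i-2ib_i}
=(1-p^{-1})\prod_{i=1}^n\frac{1}{1-p^{-(2n+3-2i)}}\cdot\frac{1}{1-p^{-2i}}
=(1-p^{-1})\prod_{i=2}^N\frac{1}{1-p^{-i}},
\]
and the volumes from Lemma~\ref{lem-gvol} cancel the $(1-p^{-1})$. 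So the ``clean multiplicative form of $|J_Q|_p^{-1}$'' you worry about is not an input at all; the multiplicativity comes directly from the explicit orbit count. Once you replace your coarea step with this direct count, your argument coincides with the paper's.

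One minor remark on your sanity check: the direct $n=1$ computation confirms the formula up to $|\mc J'|_p$, but it does not by itself ``force $|\mc J'|_p=1$''; the paper handles the constant separately in Lemma~\ref{lem-ksiolajidebt}.
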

\begin{proof}
Our strategy is to partition $W_N^0(\Z_p)$ into the union over $\vec{a},\vec{b} \in \mathbb{N}^n$ of the preimage under projection $\pi \colon W_N^0(\Z_p) \to W_N^{\on{top}}(\Z_p)$ of the set $\mc{L}_{\vec{a},\vec{b}}(p)$ defined in \S\ref{sec-fundzits}. Taking $\phi$ to be the indicator function of $\pi^{-1}(\mc{L}_{\vec{a},\vec{b}}(p))$ and applying Proposition~\ref{prop-jac2} and Lemma~\ref{lem-ksiolajidebt} yields that
\begin{align} \label{eq-vollkfurreal}
& \int_{w \in \pi^{-1}(\mc{L}_{\vec{a},\vec{b}}(p))} |Q(w)|_pdw = \\
& \qquad\quad \frac{\on{Vol}\big((\on{SL}_n \times \on{SL}_{n+1})(\Z_p)\big)}{\big(\prod_{i = 1}^n p^{(n+1-i)a_i+ib_i}\big)^2} \int_{\substack{q \in \Z_p \\ \nu_p(q) = \sum_{i = 1}^n (n+1-i)a_i+ib_i}} \#\left(\frac{Q^{-1}(q) \cap \mc{L}_{\vec{a}, \vec{b}}(p)}{(\on{SL}_n \times \on{SL}_{n+1})(\Z_p)}\right) dq. \nonumber
\end{align}
Now, let $\nu_p$ denote the usual $p$-adic valuation. Proposition~\ref{prop-padicfund} along with the formula~\eqref{eq-formforq} for the $Q$-invariant implies that we have for all $q$ with $\nu_p(q) = \sum_{i = 1}^n (n+1-i)a_i+ib_i$ that
\begin{equation} \label{eq-innermass}
    \#\left(\frac{Q^{-1}(q) \cap \mc{L}_{\vec{a}, \vec{b}}(p)}{(\on{SL}_n \times \on{SL}_{n+1})(\Z_p)}\right) = \prod_{i = 1}^n p^{(n-i)a_i + ib_i},
\end{equation}
so substituting~\eqref{eq-innermass} along with the calculation of $\on{Vol}\big((\on{SL}_n \times \on{SL}_{n+1}(\Z_p)\big)$ from Lemma~\ref{lem-gvol} into the right-hand side of~\eqref{eq-vollkfurreal} yields
\begin{align} \label{eq-vollkwhatsit}
\int_{w \in \pi^{-1}(\mc{L}_{\vec{a},\vec{b}}(p))} |Q(w)|_pdw & =  \frac{(1-p^{-1})(1 - p^{-n-1}) \times \prod_{i = 2}^n ( 1- p^{-i})^2 \times \prod_{i = 1}^n p^{(n-i)a_i+ib_i}}{\big(\prod_{i = 1}^n p^{(n+1-i)a_i+ib_i}\big)^{3}}.
\end{align}
Summing~\eqref{eq-vollkwhatsit} over all $\vec{a},\vec{b} \in \mathbb{N}^n$ and using the calculation of $\on{Vol}(G_N(\Z_p))$ from Lemma~\ref{lem-gvol}, we have that
\begin{align} \label{eq-finalcountdowns}
\frac{1}{\on{Vol}(G_N(\Z_p))}\int_{w \in W_N^0(\Z_p)} |Q(w)|_pdw & = \frac{1}{\on{Vol}(G_N(\Z_p))}\sum_{\vec{a},\vec{b} \in \mathbb{N}^n} \int_{w \in \pi^{-1}(\mc{L}_{\vec{a},\vec{b}}(p))} |Q(w)|_pdw \nonumber \\
& = \prod_{i = 1}^n \sum_{\vec{a} \in \mathbb{N}^n} \frac{1}{p^{(2n+3-2i)a_i}} \times \sum_{\vec{b} \in \mathbb{N}^n} \frac{1}{p^{2ib_i}} \nonumber \\
& =  \prod_{i = 1}^n \frac{1}{1 - p^{-(N-(2i-2))}} \times \frac{1}{1-p^{-2i}} =  \prod_{i = 2}^N \frac{1}{1 - p^{-i}}, \nonumber
\end{align}
which is the desired result.
\end{proof}

Theorem~\ref{thm-main1} now follows by combining Proposition~\ref{prop-specialvol} with Theorem~\ref{thm-main3}, and by evaluating the resulting Euler product. 

\subsection{Proofs of Theorems~\ref{thm-z2z3} and~\ref{thm-z2z32}} \label{sec-pf2tors}

Let $R$ be a principal ideal domain. 
\begin{defn} \label{def-proj}
We say that (the $(\on{GL}_2 \times \on{SL}_3)(R)$-orbit of) a pair $(A,B) \in W_3(R)$ is \emph{projective} if the following property is satisfied. Write $\on{inv}(A,B) = \sum_{i = 0}^3 f_ix^{3-i}y^i$, and for each $k \in \{0,1,2\}$, let $C^{(k)}$ be the $3 \times 3$ matrix over $R$ defined as follows:
\begin{equation} \label{eq-defcks}
C^{(0)} = BA^{*}B, \quad C^{(1)} = B, \quad C^{(2)} = A,
\end{equation}
where $A^*$ denotes the adjugate matrix of $A$. 
Let $M \in \on{Mat}_{3 \times 6}(R)$ be the matrix whose $k^{\mathrm{th}}$ row consists of the entries of $C^{(k)}$ lying on or above the diagonal, written as a list in some order that is uniform over $k$. Then the pair $(A,B)$ is projective if and only if the greatest common divisor of the $3 \times 3$ minors of $M$ is equal to $1$.
\end{defn}

One can check from the above definition that: (1) projectivity is a $(\on{GL}_2 \times \on{SL}_3)(R)$-invariant condition; (2) projectivity over $\Z$ is equivalent to projectivity over $\Z_p$ for every prime $p$; (3) projectivity over $\Z_p$ is a mod-$p$ condition (i.e., whether or not a pair $(A,B)$ is projective is determined by the residue class of $(A,B)$ modulo $p$); and (4) any pair $(A,B) \in W_3^0(\F_p)$ with $Q(A,B) \neq 0$ is projective --- this last claim follows from Proposition 2.9, which implies that it suffices to verify the claim for the image of the section $\sigma_0$, which is easily done. In particular, the set of projective elements of $W_3(\Z)$ is a big family in $W_3(\Z)$.

The motivation to introduce the notion of projectivity is the following parametrization result, which relates $2$-torsion ideals of rings defined by binary cubic forms to projective reducible $\on{SL}_3(\Z)$-orbits on $W_3(\Z)$:

\begin{theorem} \label{thm-bharg}
  The elements of the group $\mc{I}(R_f)[2]$ are in natural bijection with the projective reducible $\on{SL}_3(\Z)$-orbits of pairs $(A,B) \in W_3(\Z)$ with $-\det(xA - yB) = f(x,y)$.
\end{theorem}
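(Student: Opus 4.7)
The plan is to deduce the bijection from the Bhargava--Wood parametrization, specialized to $N = 3$, by restricting to the loci cut out by reducibility and projectivity and translating both conditions to the ideal side. Recall that the Bhargava parametrization for cubic forms places $\on{SL}_3(\Z)$-orbits on $\on{inv}^{-1}(f) \cap W_3(\Z)$ in bijection with equivalence classes of pairs $(I, \delta)$, where $I \subset K_f$ is a fractional $R_f$-ideal and $\delta \in K_f^\times$ satisfies $I^2 \subseteq (\delta) \cdot R_f$ together with $\on{N}(I)^2 = \on{N}(\delta)$, modulo the action $(I, \delta) \sim (\alpha I, \alpha^2 \delta)$ for $\alpha \in K_f^\times$. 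As discussed in \S\ref{sec-remoaner}, reducibility of the orbit is equivalent to $\delta \in K_f^{\times2}$; the equivalence relation then lets one normalize $\delta = 1$, so a reducible orbit corresponds to a fractional ideal $I$ with $I^2 = R_f$, uniquely determined by the orbit (since fixing $\delta = 1$ within the equivalence class forces $\alpha^2 = 1$ and hence $\alpha = \pm 1$ in the field $K_f$, which does not alter $I$).

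Next, I would identify projectivity in the sense of Definition~\ref{def-proj} with the condition that the associated $I$ is an invertible $R_f$-module. The ideal $I$ attached to $(A, B)$ arises as a rank-$3$ $\Z$-submodule of $K_f$, and the three symmetric matrices $C^{(0)}$, $C^{(1)} = B$, $C^{(2)} = A$ encode the action of a $\Z$-basis $\{1, \omega_1, \omega_2\}$ of $R_f$ on $I$ via multiplication inside $K_f$; the appearance of $f_0 B A^{-1} B$ in $C^{(0)}$ comes from the Cayley--Hamilton identity for $A^{-1}B$ applied to the characteristic polynomial $\det(xA - yB) = -f(x, y)$, which ensures $C^{(0)}$ is integral. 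Assembling the three matrices into the $3 \times 6$ matrix $M$ then produces a presentation matrix for $I$ as an $R_f$-module, whose initial Fitting ideal is generated by the $3 \times 3$ minors of $M$. Standard commutative algebra yields that this Fitting ideal equals the unit ideal if and only if $I \otimes \Z_p$ is locally free of rank $1$ over $R_f \otimes \Z_p$ for every prime $p$, i.e.\ if and only if $I$ is an invertible $R_f$-module.

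Combining the two steps, projective reducible orbits above $f$ correspond exactly to invertible fractional $R_f$-ideals $I$ satisfying $I^2 = R_f$, which is precisely the group $\mc{I}(R_f)[2]$, and the group structure is naturally respected because multiplication of ideals matches the composition law on orbits coming from the Bhargava parametrization. The principal obstacle is the middle step: explicitly confirming that the matrices $C^{(0)}$, $B$, $A$ encode the $R_f$-module structure on $I$ and that the gcd of the $3 \times 3$ minors of $M$ agrees with the Fitting ideal controlling local freeness. This is a concrete multilinear-algebra computation performed by choosing an explicit $\Z$-basis of $R_f$, writing generators of $I$ in terms of it, and matching the entries of $C^{(0)}, B, A$ to the structure constants; the bookkeeping is intricate but ultimately reduces, prime by prime, to checking that the natural multiplication map $I \otimes_{R_f} I \to R_f$ is surjective locally, which is where essentially all the arithmetic content of the theorem resides.
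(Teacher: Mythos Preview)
The paper does not supply its own proof of this statement: Theorem~\ref{thm-bharg} is quoted directly from the cited references~\cite[Theorem~17]{MR3369305} and~\cite[Proposition~204]{swathesis}, and the paper simply invokes it as input to the proof of Theorems~\ref{thm-z2z3} and~\ref{thm-z2z32}. So there is no ``paper's own proof'' to compare against.

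That said, your sketch is a reasonable outline of how the cited results are actually established, and it correctly identifies the two key steps: (i) reducibility forces $\delta$ to be a square, so after normalizing $\delta=1$ the pair data reduces to a fractional ideal $I$ with $I^2\subseteq R_f$ and $\on{N}(I)^2=1$; and (ii) the projectivity condition of Definition~\ref{def-proj} is precisely the condition that the associated $R_f$-module $I$ is invertible. One small imprecision: in step (i) you pass directly from $\delta=1$ to $I^2=R_f$, but the parametrization only gives $I^2\subseteq R_f$ together with the norm equality; the upgrade to $I^2=R_f$ genuinely uses invertibility of $I$, so steps (i) and (ii) are not independent in the way your write-up suggests. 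Also, calling $M$ a ``presentation matrix for $I$ as an $R_f$-module'' is slightly off: the $3\times 6$ matrix $M$ records the symmetric bilinear forms giving the map $I\otimes I\to R_f$ with respect to the basis $1,\omega_1,\omega_2$ of $R_f$, and the $3\times 3$ minors being coprime is the condition that this map is surjective (equivalently, that $I$ is locally free of rank one). Your Fitting-ideal language can be made to say this, but the object being presented is the cokernel of $I\otimes I\to R_f$, not $I$ itself.
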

\begin{proof}
Consider the set $H_f$ of equivalence classes of pairs $(I,\delta)$, where $I$ is a fractional ideal of $R_f$ and $\delta \in K_f^\times$ are such that we have the containment $I^2 \subset (\delta)$ and equality of norms $\on{N}(I)^2 = \on{N}(\delta)$, and where two such pairs $(I_1, \delta_1)$ and $(I_2, \delta_2)$ are equivalent if there exists $\kappa \in K_f^\times$ such that $I_1 = \kappa I_2$ and $\delta_1 = \kappa^2\delta_2$. By~\cite[Theorem~5.7]{MR3187931}, the set $H_f$ is in natural bijection with the set of $\on{SL}_3(\Z)$-orbits of pairs $(A,B) \in W_3(\Z)$ with $-\det(xA - yB) = f(x,y)$; for an explicit construction of this bijection, see~\cite[\S2.2, p.~1007]{MR3782066}.

Now, we have an injection $\mc{I}(R_f)[2] \hookrightarrow H_f$, given by sending $I$ to the equivalence class of $(I,1)$; thus, we may regard $\mc{I}(R_f)[2]$ as a subset of $H_f$, and it suffices to determine the image of this subset under the bijection referenced above. This image was determined in~\cite[Lemma~16]{MR3369305} to be the set of projective reducible $\on{SL}_3(\Z)$-orbits of pairs $(A,B) \in W_3(\Z)$ with $-\det(xA - yB) = f(x,y)$. Note that a different but equivalent definition of projectivity is used in~\cite{MR3369305}---there, an orbit is projective if it corresponds to the equivalence class of a pair $(I,\delta)$ with $I$ invertible. The equivalence of the two definitions can be shown using the aforementioned explicit construction of the bijection (see~\cite[\S2.2,~p.~1007]{MR3782066}), from which it follows that $I$ is invertible if and only if it corresponds to a pair $(A,B)$ such that the gcd criterion in Definition~\ref{def-proj} is satisfied.
\end{proof}

By Theorem~\ref{thm-bharg}, proving Theorems~\ref{thm-z2z3} and~\ref{thm-z2z32} amounts to determining asymptotics for the number of projective reducible $\on{SL}_3(\Z)$-orbits on $W_3(\Z)$. An argument entirely analogous to the proof of Proposition~\ref{prop-sasymp} implies that these asymptotics are the same as the asymptotics for the number of projective $G_3(\Z)$-orbits on $W_3^0(\Z)$. By Theorem~\ref{thm-acceptcubic2}, Proposition~\ref{prop-jac}, and Lemma~\ref{lem-ksiolajidebt}, this amounts to evaluating a certain $p$-adic integral for each prime $p$, which we \mbox{do as follows:}
\begin{prop}
Fix a prime $p$. Then we have that
$$\frac{1}{\on{Vol}(G_3(\Z_p))}\int_{\substack{w \in W_3^0(\Z_p) \\ w\, \mathrm{is}\, \mathrm{proj.}}} |\lambda(w)|_pdw = 1+p^{-2}.$$
\end{prop}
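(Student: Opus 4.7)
My plan is to adapt the strategy of Proposition~\ref{prop-specialvol} to $N = 3$ while tracking the additional projectivity condition. I would partition $W_3^0(\Z_p) = \bigsqcup_{(a,b) \in \mathbb{N}^2} \pi^{-1}(\mc{L}_{a,b}(p))$, where $\pi \colon W_3^0 \to W_3^{\on{top}}$ is the projection, and let $\rho_{a,b}(p)$ denote the proportion of pairs in $\pi^{-1}(\mc{L}_{a,b}(p))$ that are projective. Repeating the level-by-level calculation of Proposition~\ref{prop-specialvol} via Propositions~\ref{prop-jac2} and~\ref{prop-padicfund} will yield
\begin{equation*}
\frac{1}{\on{Vol}(G_3(\Z_p))} \int_{\substack{w \in W_3^0(\Z_p) \\ w \text{ proj.}}} |\lambda(w)|_p\, dw \;=\; |\mc{J}'|_p \sum_{(a,b) \in \mathbb{N}^2} \rho_{a,b}(p) \, p^{-3a - 2b}.
\end{equation*}

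The heart of the argument will be the computation of $\rho_{a,b}(p)$. By property~(4) following Definition~\ref{def-proj}, every pair in $\pi^{-1}(\mc{L}_{0,0}(p))$ is projective, so $\rho_{0,0}(p) = 1$. For $(a,b) \neq (0,0)$, projectivity is governed by the $3 \times 3$ minors of the matrix $M$ built from $A$, $B$, and $C^{(0)} = f_0 B A^{-1} B + f_1 B + f_2 A$; the block structure imposed by $\pi^{-1}(\mc{L}_{a,b}(p))$ reduces this to a mod-$p$ count that I expect to yield $\rho_{a,b}(p)$ as an explicit function of $(a,b,p)$. Substituting this into the sum above and evaluating the resulting geometric series in $a$ and $b$ should collapse the answer to $1 + p^{-2}$; the $|\mc{J}'|_p$ prefactor is expected to cancel, in the spirit of $|\mc{J}/\mc{J}'| = 1$ proved in Lemma~\ref{lem-ksiolajidebt}.

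A cleaner parallel route runs through Proposition~\ref{prop-jac}: applied with $\phi$ the indicator of the projective locus, it rewrites the left-hand side as $|\mc{J}|_p$ times the Haar-average over $f \in U_3(\Z_p)$ of the number of projective $G_3(\Z_p)$-orbits on $\on{inv}^{-1}(f) \cap W_3^0(\Z_p)$. By a local form of Theorem~\ref{thm-bharg}, that orbit count equals the size of the $2$-torsion ideal group of the cubic \'etale $\Z_p$-algebra $R_f \otimes_\Z \Z_p$, and the $p$-adic density of ideal $2$-torsion for cubic orders was essentially computed in~\cite[\S6]{MR3369305}, producing exactly the Euler factor $\zeta_p(2)/\zeta_p(4) = 1 + p^{-2}$.

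The main obstacle, either way, will be the direct mod-$p$ analysis of the projectivity condition on $W_3^0(\Z_p)$ for $(a,b) \neq (0,0)$---or, if one instead invokes~\cite[\S6]{MR3369305}, the careful bookkeeping needed to transfer the density calculation from $W_3(\Z_p)$ to the hyperplane $W_3^0(\Z_p)$ while keeping track of $|\mc{J}|_p$ versus $|\mc{J}'|_p$.
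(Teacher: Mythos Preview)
Your plan is viable in outline but differs from the paper's route and leaves the genuinely hard step undone. The paper does \emph{not} use the $\mc{L}_{a,b}$ partition or Proposition~\ref{prop-jac2} here. Instead it slices $W_3^0(\Z_p)$ by $k=\nu_p(Q)+1$, fibers over the four ``top'' entries $(A_{12},A_{13},B_{12},B_{13})$ regarded as a matrix $M\in\on{Mat}_{2\times 2}(\Z/p^k\Z)$ with $\nu_p(\det M)=k-1$, and then, since projectivity is a mod-$p$ condition, computes for each residue $\ol M\in\on{Mat}_{2\times 2}(\F_p)$ the number $\#S_{\ol M}$ of projective $(A,B)\in W_3^0(\F_p)$ with those top entries. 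The key calculation is that $\#S_{\ol M}=p^6$ if $\det\ol M\neq 0$, $\#S_{\ol M}=0$ if $\ol M=0$, and $\#S_{\ol M}=p^5(p-1)$ otherwise (after an $H_2(\F_p)$-translation one reduces to top entries $(0,1,0,0)$, where projectivity becomes exactly $B_{22}B_{33}-B_{23}^2\neq 0$). One then needs a matrix-counting lemma for $\#\{M\in\on{Mat}_{2\times 2}(\Z/p^k\Z):\nu_p(\det M)=k-1,\ \ol M\neq 0\}$, sums over $k$, and divides by $\on{Vol}(G_3(\Z_p))$ to get $1+p^{-2}$ on the nose---no $\mc{J}'$ ever enters.

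Your $\rho_{a,b}$ approach could in principle be pushed through, but note two issues. First, Proposition~\ref{prop-jac2} applies only to $\phi$ that factor through $\pi$, which the projectivity indicator does not; you can sidestep this because $|Q|_p$ is constant on each $\pi^{-1}(\mc{L}_{a,b})$ and multiply in $\rho_{a,b}$ by hand, but then the leftover $|\mc{J}'|_p$ must be shown equal to $1$ \emph{for each $p$}, and Lemma~\ref{lem-ksiolajidebt} only gives the global statement $|\mc{J}/\mc{J}'|=1$. Second, actually computing $\rho_{a,b}$ \emph{is} the proof---you would end up performing essentially the mod-$p$ analysis above, just organized differently. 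Your alternative route through Proposition~\ref{prop-jac} and a ``local Theorem~\ref{thm-bharg}'' is conceptually clean, but the paper never establishes the required local orbit parametrization, and transporting the densities from~\cite[\S6]{MR3369305} (computed on $W_3(\Z_p)$, not $W_3^0(\Z_p)$) is exactly the bookkeeping you already flag as the main obstacle.
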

\begin{proof}
Our strategy is to slice up the set of projective elements of $W_3^0(\Z_p)$ into level sets for the function $w \mapsto \nu_p(\lambda(w))$, to evaluate the integral on each level set, and to sum up the results. To this end, given an integer $k \geq 1$, let $L_k \defeq \{(A,B) \in W_3^0(\Z_p) : \nu_p(Q(A,B)) = k-1\}$, given \mbox{$a,b,c,d \in \F_p$, let}
$$S_{a,b,c,d} \defeq \{(A,B) \in W_3^0(\F_p) : A_{12} = a,\,A_{13} = b,\,B_{12} = c,\,B_{13} = d, \text{ and $(A,B)$ is proj.}\},$$
and given $m \in \Z/p^k\Z$, denote by $\ol{m}$ the mod-$p$ reduction of $m$. Then we have that
\begin{equation} \label{eq-vollkproj}
  \int_{\substack{w \in L_k \\ w\,\mathrm{is}\,\mathrm{proj}.}} |\lambda(w)|_pdw = p^{1-k} \times p^{-4k} \times \sum_{\substack{M \in \on{Mat}_{2 \times 2}(\Z/p^k\Z) \\ \nu_p(\det M) = k-1}} p^{-6} \times \#S_{\ol{M}_{11},\ol{M}_{12},\ol{M}_{21},\ol{M}_{22}}.
\end{equation}
We now determine the size of the set $S_{a,b,c,d}$ for each choice of $a,b,c,d \in \F_p$:
\begin{itemize}
\item Let $(A,B) \in W_3^0(\F_p)$. As mentioned above, if $\lambda(A,B) \neq 0$, then $(A,B)$ is projective. Thus, if $ad-bc \neq 0$, then $\#S_{a,b,c,d} = p^6$. 
\item Now suppose that $\lambda(A,B) = 0$. A calculation reveals that if $p$ divides all of $A_{12}$, $A_{13}$, $B_{12}$, and $B_{13}$, then $(A,B)$ is not projective. Thus, for $(A,B)$ to be projective, at least one of these four matrix entries must be a unit. We now fiber over these four matrix entries and determine the number of possibilities for the pair $(A,B)$ in each fiber. Fix four elements $a,b,c,d \in \F_p$ with $ad - bc = 0$ and $\{0\} \neq \{a,b,c,d\}$. We claim that $\#S_{a,b,c,d}$ is independent of the choice of $a,b,c,d$. Indeed, if $a',b',c',d' \in \F_p$ with $a'd' - b'c' = 0$ and $\{0\} \neq \{a',b',c',d'\}$, then there exists $\gamma \in H_2(\F_p)$ such that if we set $(A',B') \defeq \gamma \cdot (A,B)$, then $(A'_{12},A'_{13},B'_{12},B'_{13}) = (a,b,c,d)$. Thus, $\gamma$ induces a bijection between $S_{a,b,c,d}$ and $S_{a',b',c',d'}$, so it suffices to compute $\#S_{0,1,0,0}$. A calculation reveals that a pair $(A,B)$ with $(A_{12}, A_{13}, B_{12}, B_{13}) = (0,1,0,0)$ is projective if and only if $B_{22}B_{33} - B_{23}^2 \neq 0$. 
Using this characterization, it is easy to check that $\#S_{0,1,0,0} = p^5(p-1)$.
\end{itemize}
Substituting the formulas for $\#S_{a,b,c,d}$ obtained above into the right-hand side of~\eqref{eq-vollkproj}, we find that
\begin{equation} \label{eq-vollkproj2}
 \int_{\substack{w \in L_k \\ w\,\mathrm{is}\,\mathrm{proj}.}} |\lambda(w)|_pdw =  p^{1 - 5k} \times \begin{cases} c(k), & \text{if $k = 1$}, \\ 
 p^{-1}(p-1) \times c(k), & \text{if $k \geq 2$.} \end{cases}
\end{equation}
where $c(k) \defeq \#\{M \in \on{Mat}_{2 \times 2}(\Z/p^k\Z) : \nu_p(\det M) = k-1 \text{ and } M \not\equiv 0 \pmod p\}$.  Let $c'(k) \defeq \#\{M \in \on{Mat}_{2 \times 2}(\Z/p^k\Z) : \nu_p(\det M) = k-1\}$. Then we have 
\begin{equation} \label{eq-cppinc}
c(k) = c'(k) - p^{4}c'(k-2),
\end{equation}
where for convenience we set $c'(m) = 0$ if $m \leq 0$. The next lemma computes $c'(k)$:
  \begin{lemma} \label{lem-matcount}
    Let $k \geq 1$ be as above. Then $c'(k) = p^{2k-1}(p^2-1)(p^k-1)$.
  \end{lemma}
  \begin{proof}[Proof of Lemma~\ref{lem-matcount}]
    For $d \in \Z/p^k \Z$, let $M(d) \defeq \{M \in \on{Mat}_{2 \times 2}(\Z/p^k\Z) : M_{11}M_{22} = M_{12}M_{21} = d\}$, and let $N(d) \defeq \{(a,b) \in (\Z/p^k \Z)^2 : ab = d\}$. Then $M(d) = N(d)^2$, and so if we set $c''(k) \defeq \#\{M \in \on{Mat}_{2 \times 2}(\Z/p^k\Z) : \det M = 0\}$ of matrices over $\Z/p^k\Z$ with determinant $0$, then we have
    \begin{equation} \label{eq-mton}
    c''(k) = \sum_{d \in \Z/p^k \Z} M(d) = \sum_{d \in \Z/p^k \Z} N(d)^2.
    \end{equation}
    Let $\phi$ denote Euler's totient function. First suppose $\nu_p(d) < k$, and fix a number $j \in \{0, \dots, \nu_p(d)\}$. If $d$ factors as $d = ab$ where $\nu_p(a) =j$, then we have that there are $\phi(p^{k-j})$ choices for $a$ and $\phi(p^{k-\nu_p(d)+j})\phi(p^{k-\nu_p(d)})^{-1}$ choices for $b$. Summing over all $j$, \mbox{we find that}
    \begin{equation} \label{eq-whatsnot0}
      N(d) = \sum_{j = 0}^{\nu_p(d)} \phi(p^{k-\nu_p(d)})^{-1}\phi(p^{k-j})\phi(p^{k-\nu_p(d)+j}) \quad \text{if $\nu_p(d) < k$.}
    \end{equation}
    Now suppose $d \equiv 0 \pmod{p^k}$. Suppose $d$ factors as $d = ab$, let $i = \nu_p(a)$ if $\nu_p(a) < k$ and $i = k$ otherwise. Then there are $\phi(p^{k-i})$ choices for $a$, and for each $j \in \{k-i, \dots, k\}$, there are $\phi(p^{k-j})$ choices for $b$. Summing over all $i$ and $j$, we find that
    \begin{equation} \label{eq-whatsn0}
      N(0) = \sum_{i = 0}^{k} \sum_{j  =k-i}^{k} \phi(p^{k-i})\phi(p^{k-j}).
    \end{equation}
    Substituting the results of~\eqref{eq-whatsnot0} and~\eqref{eq-whatsn0} into~\eqref{eq-mton} and evaluating the sum, we deduce that
    \begin{equation} \label{eq-whatsc'}
    c''(k) = p^{2k-1}(p^k(p+1)-1).
    \end{equation}
    Finally, note that we have $c'(k) = p^4 c''(k-1) - c''(k)$. Substituting in the formula~\eqref{eq-whatsc'} for $c''(k)$ yields the lemma.
  \end{proof}
  Substituting the formula for $c'(k)$ given by Lemma~\ref{lem-matcount} into~\eqref{eq-cppinc} yields 
$$c(k) = c'(k) - p^{4}c'(k-2) =  \begin{cases} p(p-1)(p^2-1), & \text{if $k = 1$,} \\ 
p^{3k-3}(p^2-1)^2, & \text{if $k \geq 2$.} \end{cases}$$
Substituting this expression for $c(k)$ into the right-hand side of~\eqref{eq-vollkproj2}, summing up over all positive integers $k$, and dividing by the volume $\on{Vol}(G_3(\Z_p))$ as given in Lemma~\ref{lem-gvol} yields the proposition.
\end{proof}

\section*{Acknowledgments}

\noindent It is a pleasure to thank Manjul Bhargava, Andrew Granville, Arul Shankar, Artane Siad, Ila Varma, and Melanie Matchett Wood for several helpful discussions. We are also grateful to the anonymous referee for numerous insightful comments and corrections. This material is based in part upon work supported by the National Science Foundation, under the Graduate Research Fellowship as well as Award No.~2202839.

	\bibliographystyle{plain}
	\bibliography{references}

\end{document}